\theoremstyle{plain}
\newtheorem{thm}{Theorem}[section]
\newtheorem{theorem}[thm]{Theorem}
\newtheorem{lemma}[thm]{Lemma}
\newtheorem{corollary}[thm]{Corollary}
\newtheorem{proposition}[thm]{Proposition}
\newtheorem{definition}[thm]{Definition}
\theoremstyle{remark}
\newtheorem{remark}[thm]{Remark}
\newtheorem{defn-thm}[thm]{Definition-Theorem}
\renewcommand{\bar}{\overline}
\renewcommand{\phi}{\varphi}
\newcommand{\C}{{\mathbb C}}
\newcommand{\Z}{{\mathbb Z}}
\renewcommand{\tilde}{\widetilde}
\newcommand{\g}{{\mathfrak g}}
\newcommand\independent{\protect\mathpalette{\protect\independenT}{\perp}}
\def\independenT#1#2{\mathrel{\rlap{$#1#2$}\mkern2mu{#1#2}}}
\def\Z{\bf Z}
\def\P{\bf P}
\def\C{\mathbb{C}}
\def\U{\mathcal {U}}
\def\T{\mathcal{T}}
\def\TT{\mathcal{T}_{m}^{H}}
\def\PP{\Phi_{m}^{H}}
\def\sZ{\mathcal{Z}_{m}}
\def\sZ{\mathcal{Z}_{m}}
\def\Z{\mathcal{Z}_{m}}
\def\ZZ{\mathcal{Z}_{m}^{H}}
\def\P{\Phi}
\def\PP{\Phi_{m}^{H}}
\def\TT{\T_{m}^{H}}
\def\C{\mathbb{C}}
\def\T{\mathcal{T}}
\def\U{\mathcal {U}}
\def\P{\Phi}
\def\TT{\mathcal {T}_m^H}
\def\C{\mathbb{C}}
\def\T{\mathcal{T}}
\def\Z{\mathcal {Z}_m}
\def\sZ{\mathcal {Z}_{m}}
\def\ZZ{\mathcal {Z}_m^H}
\def\U{\mathcal {U}}
\title{Global Torelli Theorem for Projective Manifolds of Calabi--Yau Type}
\author{Kefeng Liu and Yang Shen}
\begin{document}
\date{}
\maketitle

\vspace{-20pt}

\begin{abstract}
We prove that the period maps from the Torelli space and the moduli space with level $m$ structure of Calabi--Yau type
manifolds to the corresponding period domain of polarized Hodge structures are injective. The proof is based on the construction of holomorphic affine structure on the Teichm\"uller space and the construction of the completion space of the Torelli space with respect to the Hodge metric. 
 \end{abstract}

\parskip=5pt
\baselineskip=15pt

\setcounter{section}{-1}

\section{Introduction}
The basic object we consider in this paper is the Calabi--Yau type manifold. A compact simply connected projective manifold $M$ of complex dimension $n$ is called
a Calabi--Yau type manifold in this paper, if it satisfies the following:
(i) there exists some $[n/2]< s\leq n$, such that
$h^{s,n-s}(M)=1$ and $ h^{s',n-s'}(M)=0\ \ \text{for any}\ s'>s$; (ii) $H^{s,n-s}(M)=H^{s, n-s}_{pr}(M)$; (iii) for any generator $[\Omega]\in H^{s,n-s}(M)$, the contraction map
$$\lrcorner:\, H^{0,1}(M,T^{1,0}M)\to H^{s-1,n-s+1}_{pr}(M)$$ defined via
$[\phi]\mapsto [\phi\lrcorner\Omega]$
is an isomorphism.  

We fix a lattice $\Lambda$ with a pairing $Q_{0}$, where $\Lambda$ is isomorphic to $H^n(M_{0},\mathbb{Z})/\text{Tor}$ for some Calabi--Yau type manifold $M_{0}$.
A polarized and
marked Calabi--Yau type manifold is a triple consisting of a Calabi--Yau
manifold type $M$, an ample line bundle $L$ over $M$, and a marking $\gamma$ defined as an isometry of the lattices
$$\gamma :\, (\Lambda, Q_{0})\to (H^n(M,\mathbb{Z})/\text{Tor},Q).$$

Let $\mathcal{Z}_m$ be an irreducible component of the moduli space of polarized Calabi--Yau type manifolds with level $m$ structure, which can be viewed analogously as that of Calabi--Yau manifolds constructed by Popp, Viehweg, and Szendr\"oi, for example in Section 2 of \cite{sz}. Assume that $\mathcal{Z}_m$ is a connected and smooth quasi-projective moduli space and there exists a universal family $\mathcal{X}_{\mathcal{Z}_m}\rightarrow\mathcal{Z}_m$.
We define the Teichm\"uller space of Calabi--Yau type manifolds to be the universal cover of $\mathcal{Z}_m$, which will be proved to be independent of the choice of $m$. 
We will denote by $\T$ the Teichm\"uller space.

Let $\mathcal{T}'$ be an irreducible connected component of the moduli space of
equivalence classes of marked and polarized Calabi--Yau type manifolds.  We call $\mathcal{T}'$ the Torelli space of Calabi--Yau type manifolds in this paper.  We will assume that both $\mathcal{Z}_m$ and $\mathcal{T}'$ contain the polarized Calabi-Yau type manifold $(M, L)$ we start with, then we will see that both the Teichm\"uller space $\T$ and the Torelli space $\T'$ are covering spaces of $\Z$, with $\T$ the universal cover of $\T'$.  The Torelli space $\T'$ is also called the framed moduli as discussed in \cite{Beau}. We will see that the Torelli space $\T'$ is the most natural space to consider period map and to study the Torelli problem. 

We summarize the relations of these spaces in the following commutative diagram
\begin{equation}\label{inr cover1}
\xymatrix{
\T \ar[dr]^-{\pi} \ar[dd]^-{\pi_m} &\\
&\T'\ar[dl]^-{\pi_m'},\\
\sZ &}
\end{equation}
with $\pi_{m}$, $\pi'_{m}$ and $\pi$ the corresponding covering maps.


Let $D$ be the period domain of the Hodge structures of polarized and marked Calabi--Yau type manifolds of weight $n$ and let $\Phi: \, \mathcal{T} \rightarrow D$ be the period map from the Teichm\"uller space of Calabi--Yau type manifolds to the period domain. 
Denote the period map on the smooth moduli space by $\Phi_{\mathcal{Z}_m}:\, \mathcal{Z}_{m}\rightarrow D/\Gamma$, where 
$\Gamma$ is the global monodromy group which acts
properly and discontinuously on $D$. More precisely, let $\Gamma=\rho(\pi_1(\ZZ))$ denote the global monodromy group,  where $$\rho:\, \pi_1(\ZZ) \to \text{Aut}(H_{\mathbb Z}, Q)$$ denotes the monodromy representation.

Then $\Phi: \,\mathcal{T}\to D$ is the lifting of $\Phi_{\mathcal{Z}_m}\circ \pi_m$. Therefore, we have the following commutative diagram
$$\xymatrix{
\T \ar[r]^-{\Phi} \ar[d]^-{\pi_m} & D\ar[d]^-{\pi_{D}}\\
\Z \ar[r]^-{\Phi_{\Z}} & D/\Gamma.
}$$

Similarly we can define the period map $\Phi':\, \T'\to D$ on the Torelli space $\T'$ by the definition of marking, such that the above diagram  and diagram \eqref{inr cover1} fit into the following commutative diagram
\begin{equation}\label{inr cover2}\xymatrix{
\T \ar[dr]^-{\pi}\ar[rr]^-{\P} \ar[dd]^-{\pi_m} && D\ar[dd]^-{\pi_{D}}\\
&\T'\ar[ur]^-{\P'}\ar[dl]_-{\pi'_{m}}&\\
\Z \ar[rr]^-{\Phi_{\Z}} && D/\Gamma.
}
\end{equation}

Since $\Phi$ is locally injective (see Proposition \ref{local torelli}), so is $\Phi_{\mathcal{Z}_m}$.
It is studied in \cite{GS} that there is a complete homogeneous Hodge metric $h$ on $D$. Then the pull-back of $h$ on $\mathcal{Z}_m$ and $\mathcal{T}$ via $\Phi_{\mathcal{Z}_m}$ and $\Phi$ respectively are both well-defined K\"ahler metrics, as $\Phi_{\mathcal{Z}_m}$ and $\Phi$ are both locally injective. By abuse of notation, they are still called Hodge
metrics in this paper. 

The main theorem of this paper is the following.

\begin{theorem}[Global Torelli Theorem]\label{intro1}
The period map $\Phi':\, \mathcal{T}'\rightarrow D$ from the Torelli space $\T'$
is injective for Calabi--Yau type manifolds. 
\end{theorem}
The proof of this theorem is outlined as follows. First, we construct a holomorphic affine structure on the Teichm\"uller space. We first fix a base point $p\in \mathcal{T}$ with the Hodge structure $\{H^{k,n-k}_p\}_{k=0}^n$ as the reference Hodge structure. With this fixed base point $\Phi(p)=o\in D$, we can identify the unipotent subgroup $N_+=\exp(\mathfrak{n}_{+})$ with its orbit in $\check{D}$. See Section 3.1 and Remark \ref{N+inD} for detail. Then we define $$\check{\mathcal{T}}=\Phi^{-1}(N_+\cap D)\subseteq \mathcal{T}.$$
We will show that $\mathcal{T}\backslash\check{\mathcal{T}}$ is an analytic subvariety of $\T$ with $\text{codim}_{\mathbb{C}}(\mathcal{T}\backslash\check{\mathcal{T}})\geq 1$.
Then we prove that the image of $$\Phi: \,\check{\mathcal{T}}\to N_{+}\cap D$$ is bounded with respect to the induced Euclidean metric on $$N_{+}\simeq \mathfrak{n}_+\simeq \mathrm{T}^{1,0}_oD$$ from the Hodge metric on $D$.
Finally, by applying the Riemann extension theorem, we conclude that $\Phi(\mathcal{T})\subseteq N_+\cap D$ as a bounded subset. 

Furthermore, we define $A=\exp(\mathfrak{a})\subset N_{+}$ as an Euclidean subspace, where $$\mathfrak{a}= (d\Phi)_{p}(\text{T}^{1,0}_{p}\T)\subset \mathfrak{n}_{+}$$ is an abelian subspace.
Let $P$ be the projection map from $N_{+}\cap D$ to its subspace $A\cap D$. 
Then we define the holomorphic map as
$$\Psi :\, \T \to A\cap D\subset A\simeq \C^{N},$$
by $\Psi=P\circ \Phi$.
We will prove that $\Psi$ is non-degenerate at each point in $\mathcal{T}$. Thus $\Psi$ induces a global holomorphic affine structure on $\mathcal{T}$.


Secondly, consider a smooth projective compactification $\bar{\mathcal{Z}}_{m}$ such that $\mathcal{Z}_m$ is Zariski open in $\bar{\mathcal{Z}}_{m}$ and the complement $\bar{\mathcal{Z}}_{m}\backslash\mathcal{Z}_m$ is a divisor of normal crossings. Let $\mathcal{Z}^H_{m}$ be the Hodge metric completion of the smooth moduli space $\mathcal{Z}_m$ inside $\bar{\mathcal{Z}}_{m}$, and let $\mathcal{T}^H_{m}$ be the universal cover of $\mathcal{Z}^H_{m}$ with the universal covering map $$\pi_{m}^H:\,\mathcal{T}^H_{m}\rightarrow \mathcal{Z}^H_{m}.$$ 
  It is easy to see that $\mathcal{Z}^H_{m}$ is a connected and smooth complex manifold. In fact   
 $\mathcal{Z}^H_{m}$ can be identified to the Griffiths completion space of finite monodromy as introduced in Theorem 9.6 in \cite{Griffiths3}.

 Consider the universal cover $\mathcal{T}^H_{m}$ of $\mathcal{Z}^H_{m}$ with the universal covering map $$\pi_{m}^H:\,\mathcal{T}^H_{m}\rightarrow \mathcal{Z}^H_{m}.$$ Then $\mathcal{T}^H_{m}$ is a connected and simply connected smooth complex manifold. We will show that $\mathcal{T}^H_{m}$ can be identified to the completion space of the Torelli space for projective Calabi-Yau type manifold.
 
By using level structure and Serre's lemma, we can show that $\mathcal{Z}^H_{m}\setminus \Z$ consists of the points around which the so-called Picard-Lefschetz transformations are trivial for $m\geq 3$.
Hence, we get the following commutative diagram \begin{align}\label{introdiagram} \xymatrix{\mathcal{T}\ar[r]^{i_{m}}\ar[d]^{\pi_m}&\mathcal{T}^H_{m}\ar[d]^{\pi_{m}^H}\ar[r]^{{\Phi}^{H}_{m}}&D\ar[d]^{\pi_D}\\
\mathcal{Z}_m\ar[r]^{i}&\mathcal{Z}^H_{m}\ar[r]^{{\Phi}_{{\mathcal{Z}_m}}^H}&D/\Gamma,
}
\end{align}
with $\Phi^H_{{\mathcal{Z}_m}}$ the continuous extension of $\Phi_{\mathcal{Z}_m}:\,\mathcal{Z}_m\rightarrow D/\Gamma,$ $i$ the inclusion map, $i_m$ the lifting of $i\circ \pi_{m}$, and $\Phi^H_{m}$ the lifting of $\Phi^H_{{\mathcal{Z}_m}}\circ \pi_{m}^H$. 

We remark that by elementary topology argument, we can show that there is a choice of $i_m$ and $\Phi^H_{m}$ such that $\Phi=\Phi^H_{m}\circ i_m$. 
If we denote $\mathcal{T}_{m}:=i_m(\mathcal{T})$ and $\Phi_{m}:=\Phi^H_{m}|_{\mathcal{T}_m}$, then $\Phi=i_m\circ \Phi_m$.

We first show that the map $\Phi^H_{m}$ is a bounded holomorphic map from $\mathcal{T}^H_{m}$ to $N_+\cap D$ with respect to the Euclidean metric on $N_+$, as a consequence of the boundedness of $\Phi$. Thus we can define the holomorphic map 
$$\Psi^H_{m}:\,\mathcal{T}^H_{m}\rightarrow A\cap D\subset A\simeq \mathbb{C}^N$$
by $\Psi=P\circ \PP$. The restriction of $\Psi^H_{m}$ on $\mathcal{T}_m$ is denoted by $\Psi_m$.  From local Torelli we have that $\Psi_m$ is nondegenerate, so it defines a holomorphic affine structure on $\mathcal{T}_m$. 
Moreover, we prove that the extension $\Psi^H_{m}$ of $\Psi_m$ is nondegenerate, so the affine structure induced by $\Psi_m$ on $\T_{m}$ can be extended to the affine structure induced by $\Psi^H_{m}$ on $\mathcal{T}^H_{m}$.

Thirdly, the completeness of $\TT$ with the induced Hodge metric and Corollary 2 in \cite{GW} imply that $$\Psi^H_{m}:\,\mathcal{T}^H_{m}\rightarrow A\cap D$$ is a covering map.
Then we use the affineness and completeness of $\mathcal{T}^H_{m}$ with the induced Hodge metric to show that $\Psi^H_{m}$ is injective for any $m\geq 3$, and hence $\TT$ is biholomorphic to $A\cap D$. This implies that $\Phi^H_{m}$ is injective as well. 

Since  $\mathcal{T}^H_{m}$ and $\mathcal{T}^H_{{m'}}$ are biholomorphic for any $m, m'\geq 3$, we define the complete complex manifold $\mathcal{T}^H$ by $\mathcal{T}^H=\mathcal{T}^H_{m}$ to get rid of the index $m$. Similarly we deine the holomorphic maps $i_{\mathcal{T}}: \,\mathcal{T}\to \mathcal{T}^H$ by $i_{\mathcal{T}}=i_m$ and $\Phi^H:\, \mathcal{T}^H\rightarrow D$ by $\Phi^H=\Phi^H_{m}$ for any $m\geq 3$. By these definitions,  diagram \eqref{introdiagram} becomes
\begin{align}\label{introdiagram'}
\xymatrix{\mathcal{T}\ar[r]^{i_\T}\ar[d]^{\pi_m}&\mathcal{T}^H\ar[d]^{\pi_{m}^H}\ar[r]^{{\Phi}^{H}}&D\ar[d]^{\pi_D}\\
\mathcal{Z}_m\ar[r]^{i}&\mathcal{Z}^H_{m}\ar[r]^{{\Phi}_{{\mathcal{Z}_m}}^H}&D/\Gamma.
}
\end{align}

It is easy to show that the map $i_\T$ is a covering map onto its image, and the Torelli space $\T'$ is biholomorphic to the image $i_{\T}(\T)$ of $i_{\T}$ in $\T^{H}$. From this we can introduce an injective map $\pi^{0}:\, \T'\to \T^{H}$ such that diagram \eqref{introdiagram'} together with diagram \eqref{inr cover2} gives the following commutative diagram
\begin{equation}\label{intr main diagram}
\xymatrix{
\T \ar[dr]^-{\pi}\ar[rr]^-{i_{\T}} \ar[dd]^-{\pi_m} &&\T^{H}  \ar[dd]^-{\pi_{m}^{H}}\ar[rr]^-{\P^{H}}  &&D\ar[dd]^-{\pi_{D}}\\
&\T'\ar[ur]^-{\pi^{0}}\ar[dl]_-{\pi'_{m}}\ar[urrr]^-{\P'}&&&\\
\Z \ar[rr]^-{i_{m}} &&\ZZ \ar[rr]^-{\P_{\Z}^H} &&D/\Gamma .}
\end{equation}

Finally, with surjectivity of $\pi$, the injectivity of $\P^{H}$ and $\pi^{0}$ in the above diagram, we can conclude Theorem \ref{intro1}.\\

Furthermore, as an application of Theorem \ref{intro1}, we have the global Torelli theorem on the moduli space $\Z$ of polarized Calabi--Yau type manifolds with level $m$ structure for any $m\geq 3$.

\begin{theorem}\label{intro2}
Let $\Z$ be the moduli space of polarized Calabi--Yau type manifolds with level $m$ structure. 
Then the global Torelli theorem holds on $\Z$, i.e. the period map $$\Phi_{\Z}:\, \Z \rightarrow D/\Gamma$$ is injective.
\end{theorem}

This paper is organized as follows. In Section \ref{basicdefn}, we give the definition of Calabi--Yau type manifolds and briefly review the definition of period damain of polarized Hodge structures. Then we introduce the Teichm\"uller space $\T$ and the Torelli space $\T'$ of Calabi--Yau type manifolds and discuss their basic properties. We then introduce the period maps on the Teichm\"uller space and the Torelli space.
We also introduce Hodge metric completion space $\TT$, on which the extended period map is also studied.

In Section \ref{boundedness of period maps}, we prove the boundedness of the period map $\Phi$, with the knowledge of Lie theory.
In Section \ref{Holomorphic affine}, we first construct the holomorphic affine structure on the Teichm\"uller space $\mathcal{T}$. Then we prove the existence of the affine structure over $\TT$.
With the completeness with the Hodge metric and the affine structure of $\TT$, we prove that the extended period map on $\TT$ is injective. 

In Section \ref{global},  with the preparations in the previous sections, we give the proof of the global Torelli theorem on the Torelli space of Calabi--Yau type manifolds in Theorem \ref{intro1}, as the main theorem of this paper.
In Section \ref{app}, we prove Theorem \ref{intro2}, i.e. the global Torelli on the moduli space of Calabi--Yau type manifolds with level $m$ structure, as an application of the main theorem.

In previous versions of this paper, we erroneously identified the Teichm\"uller space with the Torelli space by quoting a wrong lemma, which gives a wrong proof that the Torelli space is simply connected. We corrected this error, and will show that the Torelli space $\T'$ and its Hodge metric completion space $\T^H_m$ are the most natural moduli spaces to understand period maps and the global Torelli problems.

\textbf{Acknowledgement}: We would like to thank Professors Mark Green, Wilfried Schmid, Andrey Todorov, Veeravalli Varadarajan and Shing-Tung Yau for sharing many of their ideas.
\section{The period map}\label{basicdefn} 
In Section \ref{Calabi--Yau type manifolds}, we introduce the definition of polarized and marked Calabi--Yau type manifolds and some basic properties. In Section \ref{classifying space of polarized hodge structure}, we review the construction of the period domain of polarized Hodge structures. In Section \ref{moduli and teichmuller}, we briefly describe the definitions of the moduli space of polarized Calabi--Yau type manifolds, the Teichm\"uller space and the Torelli space of Calabi--Yau type manifolds. In Section \ref{section period map}, we introduce the period map from the Teichm\"{u}ller space to the period domain and the period map from the Torelli space to the period domain. 
\subsection{Calabi--Yau type manifolds}\label{Calabi--Yau type manifolds}
We generalize the definition of Calabi--Yau type manifolds in \cite{IliveManivel} as follows.
\begin{definition}\label{defn}
Let $M$ be a simply connected compact complex projective manifold of $\dim_{\mathbb{C}}M=n$ and $L$ be an ample line bundle over $M$ with $c_1(L)$ the K\"{a}hler class on $M$. We call $M$ a manifold of Calabi--Yau type if it satisfies the following conditions:
\begin{enumerate}
\item There exists some $s$ such that $[n/2]< s\leq n$ and
\begin{align*}
h^{s,n-s}(M)=1,\ \ \ \ \text{and}\ \ \ h^{s',n-s'}(M)=0\ \ \text{for}\ s'>s;
\end{align*}\label{condition1}
\item $H^{s,n-s}(M)=H^{s, n-s}_{pr}(M);$\label{condition3}
\item For any generator $[\Omega]\in H_{pr}^{s,n-s}(M)$, the contraction map
\begin{align*}
\lrcorner:\, H^{0,1}(M,T^{1,0}M)\to H_{pr}^{s-1,n-s+1}(M), \quad\quad
[\phi]\mapsto [\phi\lrcorner\Omega]
\end{align*}
is an isomorphism, where $\Omega$ is a $\bar{\partial}$-closed $(s, n-s)$-form;\label{condition2}
\end{enumerate}
where $H^{s,n-s}_{pr}(M)$ and $H^{s-1,n-s+1}_{pr}(M)$ are the primitive cohomology group of corresponding type, which will be defined in the following.
\end{definition} 

We fix a lattice $\Lambda$ with a pairing $Q_{0}$, where $\Lambda$ is isomorphic to $H^n(M_{0},\mathbb{Z})/\text{Tor}$ for some Calabi--Yau type manifold $M_{0}$.
A polarized and marked Calabi--Yau type manifold is a triple $(M, L,\gamma)$ consisting of a Calabi--Yau
manifold type $M$, an ample line bundle $L$ over $M$, and a marking $\gamma$ defined as an isometry of the lattices
\begin{equation}\label{marking}
\gamma :\, (\Lambda, Q_{0})\to (H^n(M,\mathbb{Z})/\text{Tor},Q).
\end{equation}

For a polarized and marked Calabi--Yau type manifold $M$ with the background
smooth manifold $X$, we have a canonical identification $H^n(M, \mathbb{C})=H^n(X,\mathbb{C})$. 
We still use $L$ to denote the first Chern class of $L$ which is an integer class. This defines a map
\begin{equation*}
L:\, H^n(X,{\mathbb{Q}})\to H^{n+2}(X,{\mathbb{Q}}), \quad A\mapsto L\wedge A. 
\end{equation*} 
Then $H_{pr}^n(X)=\ker(L)$ is called the \textit{primitive cohomology groups}, where the coefficient ring can be ${\mathbb{Q}}$, ${\mathbb{R}}$ or ${\mathbb{C}}$. Moreover, let $H_{pr}^{k,n-k}(M)=H^{k,n-k}(M)\cap H_{pr}^n(M,{\mathbb{C}})$ and denote $\dim H_{pr}^{k,n-k}(M, \mathbb{C})=h^{k,n-k}$. We then have the Hodge decomposition
\begin{align*}  
H_{pr}^n(M,{\mathbb{C}})=H_{pr}^{n,0}(M)\oplus\cdots\oplus
H_{pr}^{0,n}(M),
\end{align*}
such that $H_{pr}^{n-k,k}(M)=\bar{H_{pr}^{k,n-k}(M)}$.
One may refer to Chapter 7.1 in \cite{Voisin} for more details of polarized Hodge structure on primitive cohomology group of K\"{a}hler manifolds. 

One also notices that the middle dimensional Hodge numbers of a Calabi--Yau type manifold are similar to that of a Calabi--Yau manifold. Actually, a Calabi--Yau manifold, for example as considered in \cite{sz} or in \cite{LS}, satisfies all the conditions in Definition \ref{defn}. Therefore, a Calabi--Yau manifold is of course a Calabi--Yau type manifold. Hence, the results presented in this paper can be applied to  Calabi--Yau manifolds. In the rest of this paper, one may consider Calabi--Yau manifolds as our special cases. We remark that one may refer to \cite{IliveManivel} for many interesting examples of Calabi--Yau type manifolds of Fano type.

\subsection{Period domain of polarized Hodge structures}\label{classifying space of polarized hodge structure} In this section, we will review the construction of period domain of polarized Hodge structures. We remark that most of the discussions in this section is based on Section 3 in \cite{schmid1}. 

For a polarized and marked Calabi--Yau type manifold $(M, L)$ with background
smooth manifold $X$, we identify 
$H^{n}(M,\mathbb{Z})/\text{Tor}$ with $Q$ to a fixed lattice $\Lambda$ with $Q_{0}$. This
gives us a canonical identification \begin{equation*}
H^n(M)\simeq H^n(X),
\end{equation*}
where the coefficient ring can be ${\mathbb{Q}}$, ${\mathbb{R}}$ or
${\mathbb{C}} $. The polarization $L$, which is an integer class, defines a map
\begin{equation*}
L:\, H^n(X,{\mathbb{Q}})\to H^{n+2}(X,{\mathbb{Q}}), \quad A\mapsto L\wedge A. 
\end{equation*}
We denote by $\ker(L)=H^n_{pr}(X, \mathbb{Q})$ the primitive cohomology. Moreover, let $H_{pr}^{k,n-k}(M)=H^{k,n-k}(M, \mathbb{C})\cap H_{pr}^n(X,{\mathbb{C}})$ and its $\dim H_{pr}^{k,n-k}(M, \mathbb{C})=h^{k,n-k}$. We then have the Hodge decomposition
$$H_{pr}^n(X,{\mathbb{C}})=H_{pr}^{n,0}(M)\oplus\cdots\oplus
H_{pr}^{0,n}(M),$$
$H_{pr}^{n-k,k}(M)=\bar{H_{pr}^{k,n-k}(M)}$.
The Poincar\'e bilinear form $Q$ on $H_{pr}^n(X,{\mathbb{Q}})$ is
defined by
\begin{equation*}
Q(u,v)=(-1)^{\frac{n(n-1)}{2}}\int_X u\wedge v
\end{equation*}
for any $d$-closed $n$-forms $u,v$ on $X$. The bilinear form $Q$ is
symmetric if $n$ is even and is skew-symmetric if $n$ is odd.
Furthermore, $Q $ is nondegenerate and can be extended to
$H_{pr}^n(X,{\mathbb{C}})$ bilinearly, and it satisfies the
Hodge-Riemann relations
\begin{align} 
&Q\left ( H_{pr}^{k,n-k}(M), H_{pr}^{l,n-l}(M)\right )=0
\text{ unless }k+l=n,\label{cl30}\\
&\left (\sqrt{-1}\right )^{2k-n}Q\left ( v,\bar v\right )>0
\text{ for } v\in H_{pr}^{k,n-k}(M)\setminus\{0\}. \label{cl40}
\end{align}

Let $f^k=\sum_{i=k}^n
h^{i,n-i}$, $f^0=m$, and $$F^k=F^k(M)=H_{pr}^{n,0}(M)\oplus\cdots\oplus H_{pr}^{k,n-k}(M)$$ from which we have the decreasing filtration
$H_{pr}^n(X,{\mathbb{C}})=F^0\supset\cdots\supset F^n.$  We know that
\begin{align}  
&\dim_{\mathbb{C}} F^k=f^k,  \label{cl45}\\
&H^n_{pr}(X,{\mathbb{C}})=F^{k}\oplus \bar{F^{n-k+1}}, \text{ and }
H_{pr}^{k,n-k}(M)=F^k\cap\bar{F^{n-k}}.\label{cl46}
\end{align}
In terms of the Hodge filtration, the Hodge-Riemann relations \eqref{cl30} and \eqref{cl40} can be
written as
\begin{align}  
& Q\left ( F^k,F^{n-k+1}\right )=0, \text{ and }\label{cl50}\\  
& (\sqrt{-1})^{2k-1}Q\left (v,\bar v\right )>0, \text{ for } 0\ne v\in H^{k,n-k}_{pr}(M).\label{cl60}
\end{align} 
The period domain $D$
for polarized Hodge structures and its compact dual $\check{D}$ are then defined as follows,
\begin{align*}
D&=\left \{ F^n\subset\cdots\subset F^0=H_{pr}^n(X,{\mathbb{C}})\mid 
\eqref{cl45}, \eqref{cl50} \text{ and } \eqref{cl60} \text{ hold}
\right \},\\
\check D&=\left \{ F^n\subset\cdots\subset F^0=H_{pr}^n(X,{\mathbb{C}})\mid 
\eqref{cl45} \text{ and } \eqref{cl50} \text{ hold} \right \},
\end{align*}
where the the period domain $D$ is an open subset of $\check{D}$.
From the definition of period domain, we naturally get the {Hodge bundles} over $\check{D}$ by associating to each point in $\check{D}$ the vector spaces $\{F^k\}_{k=0}^n$ in the Hodge filtration of that point. Without confusion we will also denote by $F^k$ the bundle with $F^k$ as the fiber, for each $0\leq k\leq n$. 

Given a complex manifold $S$, a variation of Hodge structure is a holomorphic map
\begin{equation}\label{VHS}
\Phi:\,S\to D/\Gamma,
\end{equation}
with a representation $\rho:\, \pi_{1}(S)\to \Gamma \subset \text{Aut}(H_{\mathbb{Z}},Q)$ of the fundamental group of $S$,
such that the following conditions are satisfied
\begin{enumerate}
\item[(i)] locally liftable;
\item[(ii)] holomorphic: $\partial F^i_s/\partial \bar{s}\subseteq F^i_s$, $0\le i\le n$;
\item[(iii)] Griffiths transversality: $\partial F^i_s/\partial s\subseteq F^{i-1}_s$, $1\le i\le n$,
\end{enumerate}
where $\Phi(s)=\{F^n_s \subset F^{n-1}_s\subset\cdots\subset F^{0}_s\}$ for any $s\in S$.
The map $\Phi$ is called a period map on $S$.

\begin{remark}\label{primitive}
We remark the notation change for the primitive cohomology groups. Since we mainly consider $\Phi(q)=\{F^n_q\subseteq \cdots \subseteq F^{0}_q=H^{n}_{pr}(M_q, \mathbb{C})\}$, which is defined using the primitive cohomology, by abuse of notation, we will simply use $H^n(M,\mathbb{C})$ and $H^{k, n-k}(M)$ to denote the primitive cohomology groups $H^n_{pr}(M,\mathbb{C})$ and $H_{pr}^{k, n-k}(M)$ respectively. Moreover, we will use cohomology to mean primitive cohomology in the rest of the paper.
\end{remark}
Recall that we can identify a point $\Phi(p)=\{ F^n_p\subset F^{n-1}_p\subset \cdots \subset F^{0}_p\}\in D$ with its Hodge decomposition $\bigoplus_{k=0}^n H^{k, n-k}_p$, and thus with any fixed basis for the Hodge decomposition. 
In particular, with the fixed adapted basis for the Hodge decomposition of the base point, we have matrix representations of elements in the above Lie groups and Lie algebras. 
 

\subsection{Moduli space, Teichm\"{u}ller space and Torelli space}\label{moduli and teichmuller}
The \textit{moduli space} $\mathcal{M}$ of polarized complex structures on a given differential manifold $X$ is a complex analytic space consisting of biholomorphically equivalent pairs $(M, L)$ of complex structures and ample line bundles. Let us denote by $[M, L]$ the point in $\mathcal{M}$ corresponding to a pair $(M, L)$, where $M$ is a complex manifold diffeomorphic to $X$ and $L$ is an ample line bundle on $M$. If there is a biholomorphic map $f$ between $M$ and $M'$ with $f^*L'=L$, then $[M, L]=[M', L']\in\mathcal{M}$.

Before we make assumptions on the moduli space of Calabi--Yau type manifolds, let us recall that a family of compact complex manifolds $\pi:\,\U\to{\mathcal{T}}$ is \textit{versal} at a point $p\in \mathcal{T}$ if it satisfies the following conditions:
\begin{enumerate}
\item If given a complex analytic family $\iota:\,\mathcal{V}\to \mathcal{S}$ of compact complex manifolds with a point $s\in \mathcal{S}$ and a biholomorphic map $f_0:\,V=\iota^{-1}(s)\to U=\pi^{-1}(p)$, then there exists a holomorphic map $g$ from a neighbourhood $\mathcal{N}\subseteq \mathcal{S}$ of the point $s$ to $\mathcal{T}$ and a holomorphic map $f:\,\iota^{-1}(\mathcal{N})\to \mathcal{U}$ with $\iota^{-1}(\mathcal{N})\subseteq \mathcal{V}$ such that they satisfy that $g(s)=p$ and $f|_{\mathcal{\iota}^{-1}(s)}=f_0,$
with the following commutative diagram
\[\xymatrix{\iota^{-1}(\mathcal{N})\ar[r]^{f}\ar[d]^{\iota}&\mathcal{U}\ar[d]^{\pi}\\
\mathcal{N}\ar[r]^{g}&\mathcal{T}.
}\]
\item For all $g$ satisfying the above condition, the tangent map $(dg)_s$ is uniquely determined.
\end{enumerate}
The family $\pi:\,\mathcal{W}\to\mathcal{B}$ is \textit{universal} at a point $p\in \mathcal{B}$ if (1) is satisfied and (2) is replaced by
\begin{itemize}
\item[(2')] The map $g$ is uniquely determined.
\end{itemize}
If a family $\pi:\,\mathcal{W}\to\mathcal{B}$ is versal (universal resp.) at every point $p\in\mathcal{B}$, then it is a \textit{versal family} (\textit{universal family} resp.) on $\mathcal{B}$.

Let $(M,L)$ be a polarized Calabi--Yau type manifold. 
Recall that a marking of $(M,L)$ is defined as an isometry
$$\gamma :\, (\Lambda, Q_{0})\to (H^n(M,\mathbb{Z})/\text{Tor},Q).$$
For any integer $m\geq 3$, we follow the definition of Szendr\"oi \cite{sz}
 to define an $m$-equivalent relation of two markings on $(M,L)$ by
\begin{align}\label{level} 
\gamma\sim_{m} \gamma' \text{ if and only if } \gamma'\circ \gamma^{-1}-\text{Id}\in m \cdot\text{Aut}(H^n(M,\mathbb{Z})/\text{Tor},Q),
\end{align}
and denote $[\gamma]_{m}$ to be the set of all the equivalent classes of $\gamma$.
Then we call $[\gamma]_{m}$ a level $m$
structure on the polarized Calabi--Yau type manifold, and denote by $[M,L,[\gamma]_{m}]$ the isomorphism class of polarized Calabi--Yau type manifolds with level $m$ structure.

Let $\mathcal{Z}_m$ be an irreducible connected component of the moduli space of polarized Calabi--Yau type manifolds with level $m$ structure.  In this paper, we assume that 
$\mathcal{Z}_m$ is a connected quasi-projective smooth complex manifold with a universal family of Calabi--Yau type manifolds with level $m$ structures,
\begin{align}\label{szendroi universal family}
f_m: \, \mathcal{X}_{\mathcal{Z}_m}\rightarrow \mathcal{Z}_m,
\end{align}
containing $M$ as a fiber and polarized by an ample line bundle
$\mathcal{L}_{\mathcal{Z}_m}$ on $\mathcal{X}_{\mathcal{Z}_m}$. 

We remark that these conditions hold for the smooth moduli $\mathcal{Z}_m$ of Calabi--Yau manifold with level $m$ structure according to the work of Szendr\"oi in \cite{sz} and  Viehweg in \cite{v1}.

For $m\ge 3$, we denote by $\T^{m}$ the universal covering space of $\Z$, with the covering map $\pi^{m}:\, \T^{m}\to \Z$. Then we can pull-back the universal family $f_{m}:\,\mathcal{X}_{\mathcal{Z}_m}\rightarrow \mathcal{Z}_m$ to get an analytic family $\phi^{m}:\, \U^{m}\to \T^{m}$ via the covering map $\pi^{m}$, which is also universal since universal family is a local property.

We claim that $\T^{m}$ is independent of the choice of $m$. In fact,
let $m_1, m_2$ be two different integers
$\ge3$, and let $\T^{m_1}$ and $\T^{m_2}$ be the corresponding universal covering spaces with the universal families $$\phi^{m_1}:\,
\U^{m_1} \to \T^{m_1} \  \ \text{and}\ \  \phi^{m_2}:\, \U^{m_2} \to \T^{m_2}$$
respectively. Then for any point $p\in \T^{m_1}$ and the fiber
$M_p=(\phi^{m_1})^{-1}(p)$ over $p$, there exists $q\in \T^{m_2}$ such
that $N_q=(\phi^{m_2})^{-1}(q)$ is biholomorphic to $M_p$. 

By the
definition of universal family, we can find a local neighborhood
$U_p$ of $p$ and a holomorphic map $$h_p:\, U_p\to \T^{m_2},$$
$p\mapsto q$ such that the map $h_p$ is uniquely determined. Since
$\T^{m_1}$ is simply-connected, all the local holomorphic maps $$\{
h_p:\, U_p\to \T^{m_2},\, p\in \T^{m_1}\}$$ patches together to give
a global holomorphic map $h:\, \T^{m_1}\to \T^{m_2}$ which is
well-defined. Moreover $h$ is unique since it is unique on each
local neighborhood of $\T^{m_1}$. Similarly we have a holomorphic
map $h':\, \T^{m_2}\to \T^{m_1}$ which is also unique. Then $h$ and
$h'$ are inverse to each other by the uniqueness of $h$ and $h'$.
Therefore $\T^{m_1}$ and $\T^{m_2}$ are biholomorphic.

From now on we denote we denote by $\T$ the universal covering space of $\Z$ for any $m\ge 3$, as it is independent of $m$. We call $\T$ the Teichm\"uller space of Calabi--Yau type manifolds.
We also denote by $\phi:\,\U\rightarrow \mathcal{T}$ the pull-back family of the family (\ref{szendroi universal family}) via the covering $\pi_m:\, \T \to \Z$.
In a summary, we have proved the following proposition.

\begin{proposition}\label{imp}
The Teichm\"uller space $\mathcal{T}$ of Calabi--Yau type manifolds is a connected and simply connected smooth complex
manifold and the family
\begin{align}\label{versal family over Teich}
\phi:\,\mathcal{U}\rightarrow\mathcal{T},
\end{align}
which contains $M$ as a fiber, is a universal family.
\end{proposition}


Recall that a polarized and marked Calabi--Yau type manifold is a triple $(M, L, \gamma)$, where $M$ is a Calabi--Yau type manifold, $L$ is a polarization on $M$, and $\gamma$ is a marking $\gamma :\, (\Lambda, Q_{0})\to (H^n(M,\mathbb{Z})/\text{Tor},Q)$. Two triples $(M, L, \gamma)$ and $(M', L', \gamma')$ are equivalent if there exists a biholomorphic map $f:\,M\to M'$ with
\begin{align*}
f^*L'&=L,\\
f^*\gamma' &=\gamma,
\end{align*}
where $f^*\gamma'$ is given by $\gamma':\, (\Lambda, Q_{0})\to (H^n(M',\mathbb{Z})/\text{Tor},Q)$ composed with 
$$f^*:\, (H^n(M',\mathbb{Z})/\text{Tor},Q)\to (H^n(M,\mathbb{Z})/\text{Tor},Q).$$ 
We denote by $[M, L, \gamma]$ the isomorphism class of polarized and marked Calabi--Yau type manifolds of $(M, L, \gamma)$.

In this paper, we define the Torelli space as follows.

\begin{definition}
The Torelli space $\T'$ of Calabi--Yau type manifolds is the connected and irreducible component of the moduli space of equivalent classes of polarized and marked Calabi--Yau type manifolds, which contains $(M, L)$.
\end{definition}
By mapping $[M, L, \gamma]$ to $[M, L, [\gamma]_{m}]$, we have a natural covering map $\pi_m':\,\mathcal{T}'\to\mathcal{Z}_m$. From this we see easily that $\T'$ is a smooth and connected complex manifold.
We can also get a pull-back universal family $\phi':\, \U'\to \T'$ on the Torelli space $\T'$ via the covering map $\pi_m'$.

 Recall that we have defined the Teichm\"uller space $\T$ to be the universal covering space of $\Z$ with covering map $\pi_{m}:\, \T_{m}\to \Z$. Then we can lift $\pi_{m}$ via the covering map $\pi_m':\,\mathcal{T}'\to\mathcal{Z}_m$ to get a covering map $\pi:\, \T\to \T'$, such that the following diagram commutes.
\begin{equation}\label{cover1}
\xymatrix{
\T \ar[dr]^-{\pi} \ar[dd]^-{\pi_m} &\\
&\T'\ar[dl]^-{\pi_m'}\\
\sZ &\ .}
\end{equation}

\subsection{The period maps}\label{section period map}
For the family $f_m : \mathcal{X}_{\mathcal{Z}_{m}} \to \mathcal{Z}_{m}$, we denote each fiber by $$[M_s,L_{s},[\gamma_{s}]_{m}]=f_m^{-1}(s)$$ and $F_s^k=F^k(M_s)$ for any $s\in \mathcal{Z}_{m}$. With some fixed point $s_0\in \mathcal{Z}_m$, the period map is defined as a morphism $\Phi_{\mathcal{Z}_m} :\mathcal{Z}_m \to D/\Gamma$ by
\begin{equation}\label{perioddefinition}
s \mapsto \tau^{[\gamma_s]}(F^n_s\subseteq\cdots\subseteq F^0_s)\in D,
\end{equation}
where $\tau^{[\gamma_s]}$ is an isomorphism between $\C-$vector spaces
$$\tau^{[\gamma_s]}:\, H^n(M_s,\mathbb{C})\to H^n(M_{s_0},\mathbb{C}),$$
which depends only on the homotopy class $[\gamma_s]$ of the curve $\gamma_s$ between $s$ and $s_0$. Then the period map is well-defined with respect to the monodromy representation 
$$\rho : \pi_1(\mathcal{Z}_m)\to \Gamma \subseteq \text{Aut}(H_{\mathbb{Z}},Q).$$

The period map $\Phi_{\mathcal{Z}_m} :\mathcal{Z}_m \to D/\Gamma$ is a variation of Hodge structure, and hence satisfies the properties (i)-(iii) of variation of Hodge structure in Section \ref{classifying space of polarized hodge structure}.


Since the period map $\P_{\Z}$ is locally liftable, we can lift the period map to $\Phi : \T \to D$ such that the diagram
$$\xymatrix{
\T \ar[r]^-{\Phi} \ar[d]^-{\pi_m} & D\ar[d]^-{\pi}\\
\mathcal{Z}_m \ar[r]^-{\Phi_{\mathcal{Z}_m}} & D/\Gamma
}$$
is commutative. 

From the definition of marking in \eqref{marking}, we also have a well-defined period map $\P':\, \T'\to D$ from the Torelli space $\T'$ by defining
\begin{equation}\label{defn of P'}
p \mapsto \gamma_{p}^{-1}(F^n_p\subseteq\cdots\subseteq F^0_p)\in D,
\end{equation}
where the triple $[M_{p}, L_{p}, \gamma_{p}]$ is the fiber over $p\in \T'$ of the analytic family $\mathcal{U}'\to \T'$,  and the marking $\gamma_{p}$ is an
isometry from a fixed lattice $\Lambda$ to $H^{n}(M_{p},\mathbb{Z})/\text{Tor}$, which extends $\C$-linearly to an isometry from $H=\Lambda\otimes_{\mathbb{Z}}\C$
to $H^{n}(M_{p},\mathbb{C})$. Here 
$$\gamma_{p}^{-1}(F^n_p\subseteq\cdots\subseteq F^0_p)=\gamma_{p}^{-1}(F^n_p)\subseteq\cdots\subseteq\gamma_{p}^{-1}(F^0_p)=H$$
denotes a Hodge filtration of $H$.
Then we have the following commutative diagram
\begin{align}\label{periods}
\xymatrix{
\T \ar[dr]^-{\pi}\ar[rr]^-{\P} \ar[dd]^-{\pi_m} && D\ar[dd]^-{\pi_{D}}\\
&\T'\ar[ur]^-{\P'}\ar[dl]_-{\pi'_{m}}&\\
\Z \ar[rr]^-{\Phi_{\Z}} && D/\Gamma,
}
\end{align}
where the  maps $\pi_{m}$, $\pi'_{m}$ and $\pi$ are all natural covering maps between the corresponding spaces as in \eqref{cover1}.

Now we study the local properties of the above period maps. We will only consider the period map $\P:\, \T \to D$. The same results still hold for the periods from $\Z$ and $\T'$ due to the commutative diagram \eqref{periods}, as $\Z$, $\T$ and $\T'$ are all smooth.

The Griffiths transversality (3) can be interpreted for the period map $\P:\, \T \to D$ as  follows
\begin{equation}\label{griffiths transversality quotient version}
d\Phi(v)\in \bigoplus_{k=n-s+1}^{s}
\text{Hom}\left(F^k_p/F^{k+1}_p,F^{k-1}_p/F^{k}_p\right),
\end{equation}
for any $p\in \T$ and any $v\in \mathrm{T}^{1,0}_p\mathcal{T}$, 
where $F^{s+1}=0$, or equivalently as in page 29 of \cite{Grif84} that 
$$\frac{\partial H^{p,q}}{\partial \tau}\subseteq H_\tau^{p,q}\oplus H_\tau^{p-1,q+1},$$ 
where $\tau$ is a local coordinate function on $\mathcal{T}$. We have an immediate property of tangent map of the period map as follows. 

\begin{proposition}\label{local torelli}
For any $p\in{\mathcal{T}}$ and any generator $[\Omega_p]$ of $F^s_p$, the map
\begin{equation*}
P^s_p\circ d\Phi:\, ~T^{1,0}_p{\mathcal{T}}\simeq H^{0,1}(M_p, T^{1,0}M_p)\to \text{Hom}(F^s_p,F^{s-1}_p/F^s_p)\simeq H^{s-1,n-s+1}_p
\end{equation*}
is an isomorphism, where $d\Phi$ is the tangent map of $\Phi$. 
\end{proposition}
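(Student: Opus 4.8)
The plan is to identify the composition $P^s_p \circ \Phi_*$ with the contraction map from condition (3) of Definition \ref{defn}, so that the isomorphism is a direct consequence of the Calabi--Yau type hypothesis together with the Kodaira--Spencer isomorphism. First I would recall from Remark \ref{k-s} that the Kodaira--Spencer map $\kappa \colon T^{1,0}_p\mathcal{T} \to H^{0,1}(M_p, T^{1,0}M_p)$ is an isomorphism, since the family $\pi\colon\mathcal{U}\to\mathcal{T}$ is local Kuranishi at $p$. This gives the first identification $T^{1,0}_p\mathcal{T}\simeq H^{0,1}(M_p,T^{1,0}M_p)$ in the statement. The second identification $\mathrm{Hom}(F^s_p, F^{s-1}_p/F^s_p)\simeq H^{s-1,n-s+1}_p$ follows because $F^s_p=H^{s,n-s}_p$ is one-dimensional (condition (1)) and $F^{s-1}_p/F^s_p\simeq H^{s-1,n-s+1}_p$; choosing the generator $[\Omega_p]$ of $F^s_p$ trivializes the Hom-space, sending a homomorphism $A$ to $A([\Omega_p])$.

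The key step is the explicit description of $\Phi_*$ via Griffiths' formula. By \eqref{griffiths transversality quotient version} and the cup-product/contraction description of the differential of the period map (as in page 29 of \cite{Grif84}, or Chapter 10 of \cite{Voisin}), for $v\in T^{1,0}_p\mathcal{T}$ with $\kappa(v)=[\phi]\in H^{0,1}(M_p,T^{1,0}M_p)$, the component $P^s_p\circ\Phi_*(v)\in\mathrm{Hom}(F^s_p,F^{s-1}_p/F^s_p)$ acts on $[\Omega_p]$ by $[\Omega_p]\mapsto [\phi\lrcorner\Omega_p] \in H^{s-1,n-s+1}_p$. In other words, under the two identifications above, $P^s_p\circ\Phi_*$ becomes exactly the contraction map
\[
\lrcorner\colon H^{0,1}(M_p,T^{1,0}M_p)\to H^{s-1,n-s+1}_{pr}(M_p),\qquad [\phi]\mapsto[\phi\lrcorner\Omega_p].
\]
I would need to check that this contraction indeed lands in the primitive part and coincides with the one appearing in Definition \ref{defn}; this uses condition (2), $H^{s,n-s}(M_p)=H^{s,n-s}_{pr}(M_p)$, so that $\Omega_p$ is a primitive class and contraction with a vector field preserves primitivity in this range. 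Once the identification is established, condition (3) of Definition \ref{defn} says precisely that this contraction map is an isomorphism, which completes the proof.

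The main obstacle is the careful verification that Griffiths' infinitesimal formula for the period map differential, restricted to the top graded piece, is literally the contraction $[\phi]\mapsto[\phi\lrcorner\Omega_p]$ with the correct normalization and that no sign or identification ambiguity obstructs the comparison with Definition \ref{defn}(3); this is essentially bookkeeping with the Hodge filtration and the Dolbeault realization of cup product with $\phi$, but it must be done with care since $\Omega_p$ has type $(s,n-s)$ rather than $(n,0)$. Everything else — the Kodaira--Spencer isomorphism and the one-dimensionality of $F^s_p$ — is already recorded in the excerpt.
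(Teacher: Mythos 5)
Your proposal is correct and follows essentially the same route as the paper's proof: Kodaira--Spencer for the first identification, one-dimensionality of $F^s_p$ for the second, Griffiths' formula to realize $P^s_p\circ\Phi_*$ as the contraction $v\mapsto[\kappa(v)\lrcorner\Omega_p]$, and condition (3) of Definition \ref{defn} to conclude. The paper simply asserts the contraction formula as clear, whereas you flag the bookkeeping (primitivity via condition (2), normalization) that justifies it; this is a reasonable elaboration, not a different argument.
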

\begin{proof}
The first isomorphism $\mathrm{T}^{1,0}_p{\mathcal{T}}\simeq H^{0,1}(M_p, T^{1,0}M_p)$
follows from the property that the Kodaira-Spencer map is an isomorphism. The second isomorphism
\begin{align*}
\text{Hom}(F^s_p,F^{s-1}_p/F^s_p)\simeq H^{s-1,n-s+1}_p
\end{align*}
follows from the condition on Calabi--Yau type manifold that $\dim F^s_p=1$. This isomorphism depends on the choice of the generator $[\Omega_p]$. Now it is clear that the map
\begin{align*}
P^s_p\circ d\Phi:\,H^{0,1}(M_p, T^{1,0}M_p)\to H^{s-1,n-s+1}_p
\end{align*}
is given by contraction $P^s_p\circ d\Phi(v)=[\kappa(v)\lrcorner\Omega_p].$ 
This contraction map is an isomorphism by the third condition in the definition of Calabi--Yau type manifolds. 
\end{proof}
From this proposition, one notices that the tangent map of the period map $\Phi: \,\mathcal{T}\rightarrow D$ is a nondegenerate map for any $p\in \mathcal{T}$. In particular, the period map $\Phi:\, \mathcal{T}\rightarrow D$ is locally injective.
In this paper, we will say that the local Torelli theorem holds for Calabi--Yau type manifolds.

Before closing this section, we prove a lemma concerning the monodromy group $\Gamma$.
\begin{lemma}\label{trivial monodromy}
Let $\gamma$ be the image of some element of $\pi_1(\mathcal{Z}_m)$ in $\Gamma$ under the monodromy representation. Suppose that $\gamma$ is finite, then $\gamma$ is trivial. Therefore for $m\geq 3$, we can assume that $\Gamma$ is torsion-free and $D/\Gamma$ is smooth.
\end{lemma}
\begin{proof}
Let us look at the period map locally as $\Phi_{\mathcal{Z}_m}:\,\Delta^* \to D/\Gamma$. Assume that $\gamma$ is the monodromy action corresponding to the generator of the fundamental group of $\Delta^*$. We lift the period map to $\Phi :\, {\mathbb H}\to D$, where ${\mathbb H}$ is the upper half plane and the covering map from ${\mathbb H}$ to $\Delta^*$ is
$$z\mapsto \exp(2\pi \sqrt{-1} z).$$
Then $\Phi(z+1)=\gamma\Phi(z)$ for any $z\in {\mathbb H}$. Since $\Phi(z+1)$ and $\Phi(z)$ correspond to the same point in $\mathcal{Z}_m$, by the definition of $\mathcal{Z}_m$ we have
$$\gamma\equiv \text{I} \text{ mod }(m).$$
But $\gamma$ is also in $\text{Aut}(H_\mathbb{Z})$, applying Serre's lemma \cite{Serre60} or Lemma 2.4 in \cite{sz}, we have $\gamma=\text{I}$.
\end{proof}

\subsection{Hodge metric completion and extended period maps}\label{Hodge metric completion}

By the work of Viehweg in \cite{v1}, we know that $\mathcal{Z}_m$ is quasi-projective and consequently we can find a smooth projective compactification $\bar{\mathcal{Z}}_{m}$ such that $\mathcal{Z}_m$ is Zariski open in $\bar{\mathcal{Z}}_{m}$ and the complement $\bar{\mathcal{Z}}_{m}\backslash\mathcal{Z}_m$ is a divisor of normal crossings. Therefore, $\mathcal{Z}_m$ is dense and open in $\bar{\mathcal{Z}}_{m}$ with the complex codimension of the complement $\bar{\mathcal{Z}}_{m}\backslash \mathcal{Z}_m$ at least one. 

In \cite{GS}, Griffiths and Schmid studied the {Hodge metric} on the period domain $D$. We denote it by $h$. In particular, this Hodge metric is a complete homogeneous metric induced by the Killing form.
By local Torelli theorem for Calabi--Yau type manifolds, we know that the period maps $\Phi_{\mathcal{Z}_m}, \Phi$ both have nondegenerate differentials everywhere.
Thus it follows from \cite{GS} that the pull-backs of $h$ by $\Phi_{\mathcal{Z}_m}$ and $\Phi$ to $\mathcal{Z}_m$ and $\mathcal{T}$ respectively are both well-defined K\"ahler metrics. By abuse of notation, we still call these pull-back metrics the {Hodge metrics}. 
Let us denote $\mathcal{Z}^H_{m}$ to be the completion of $\mathcal{Z}_m$ with respect to the Hodge metric. By definition $\mathcal{Z}_m^H$ is the smallest complete space with respect to the Hodge metric that contains $\mathcal{Z}_m$. Then $\mathcal{Z}^H_{m}\subseteq\bar{\mathcal{Z}}_{m}$ and the complex codimension of the complement $\mathcal{Z}^H_{m}\backslash \mathcal{Z}_m$ is at least one.

Now we recall some basic properties about metric completion space we will use in this paper. We know that the metric completion space of a connected space is still connected. Therefore, $\mathcal{Z}_m^H$ is connected.

Suppose $(X, d)$ is a metric space with metric $d$. Then the metric completion space of $(X, d)$ is unique in the following sense: if $\bar{X}_1$ and $\bar{X}_2$ are complete metric spaces that both contain $X$ as a dense subset, then there exists an isometry $$f: \bar{X}_1\rightarrow \bar{X}_2$$ such that $f|_{X}$ is the identity map on $X$. Moreover, the metric completion space $\bar{X}$ of $X$ is the smallest complete metric space containing $X$ in the sense that any other complete space that contains $X$ as a subspace must also contains $\bar{X}$ as a subspace.
Hence the Hodge metric completion space $\mathcal{Z}_m^H$ is unique up to isometry, although the compact space $\bar{\mathcal{Z}}_{m}$ may not be unique. This means that our definition of $\mathcal{Z}_m^H$ is intrinsic.

Moreover, suppose $\bar{X}$ is the metric completion space of the metric space $(X, d)$. If there is a continuous map $f: \,X\rightarrow Y$ which is a local isometry with $Y$ a complete space, then there exists a continuous extension $\bar{f}:\,\bar{X}\rightarrow{Y}$ such that $\bar{f}|_{X}=f$. Since $D/\Gamma$ together with the Hodge metric $h$ is complete, we can extend the period map to a continuous map $$\Phi_{\mathcal{Z}_m}^H:\, \mathcal{Z}_m^H\to D/\Gamma.$$

In order to understand $\mathcal{Z}^H_{m}$ and the extended period map $\Phi_{\mathcal{Z}_m}^H$ well, we introduce another two extensions of $\mathcal{Z}_m$, which will be proved to be biholomorphic to $\mathcal{Z}^H_{m}$.

Let $\mathcal{Z}_m'\supseteq \mathcal{Z}_m$ be the maximal subset of $\bar{\mathcal Z}_m$ to which the period map $\Phi_{\mathcal{Z}_m}:\, \mathcal{Z}_m\to D/\Gamma$ extends continuously and let $\Phi_{\mathcal{Z}_m'} : \, \mathcal{Z}_m'\to D/\Gamma$ be the extended map. Then one has the commutative diagram
\begin{equation*}
\xymatrix{
\mathcal{Z}_m \ar@(ur,ul)[r]+<16mm,4mm>^-{\Phi_{\mathcal{Z}_m}}\ar[r]^-{i} &\mathcal{Z}_m' \ar[r]^-{\Phi_{\mathcal{Z}_m'}} & D/\Gamma.
}
\end{equation*}
with $i :\, \mathcal{Z}_m\to \mathcal{Z}_m'$ the inclusion map.

Since $\bar{\mathcal Z}_m\setminus \mathcal{Z}_m$ is a divisor with simple normal crossings, for any point in $\bar{\mathcal Z}_m\setminus \mathcal{Z}_m$ we can find a neighborhood $U$ of that point, which is isomorphic to a polycylinder $\Delta^n$, such that
$$U\cap \mathcal{Z}_m\simeq (\Delta^*)^k\times \Delta^{N-k}.$$

Let $T_i$, $1\le i\le k$ be the image of the $i$-th fundamental group of $(\Delta^*)^k$ under the monodromy representation, then the $T_i$'s are called the Picard-Lefschetz transformations.
Let us define the subspace $\mathcal{Z}_m''\subset \bar{\mathcal{Z}}_{m}$ which contains $\mathcal{Z}_m$ and the points in $\bar{\mathcal{Z}}_{m}\setminus \mathcal{Z}_m$ around which the Picard-Lefschetz transformations are of finite order, hence trivial by Lemma \ref{trivial monodromy}.

With the above preparations, we are ready to prove the following lemma.

\begin{lemma}\label{cidim} We have 
$\mathcal{Z}_m'=\mathcal{Z}_m''=\mathcal{Z}_m^H$ which is an open complex submanifold of $\bar{\mathcal Z}_m$ with $\text{codim}_{\C}(\bar{\mathcal Z}_m\setminus \mathcal{Z}_m^H)\ge 1$.
The subset $\mathcal{Z}_m^H\setminus \mathcal{Z}_m$ consists of the points around which the Picard-Lefschetz transformations are trivial.
Moreover the extended period map $$\Phi_{\mathcal{Z}_m}^H:\, \mathcal{Z}_m^H\to D/\Gamma$$ is proper and holomorphic.
\end{lemma}
\begin{proof}
From Theorem 9.6 in \cite{Griffiths3} and its proof, or Corollary 13.4.6 in \cite{CMP}, we know that $\mathcal{Z}_m''$ is open and dense in $\bar{\mathcal{Z}}_{m}$ and the period map $\Phi_{\mathcal{Z}_m}$ extends to a holomorphic map $$\Phi_{\mathcal{Z}_m''} :\, \mathcal{Z}_m'' \to D/\Gamma$$ which is proper.
In fact, as proved in Theorem 3.1 of \cite{sz}, which follows directly from Propositions 9.10 and 9.11 of \cite{Griffiths3}, $\bar{\mathcal{Z}}_{m}\backslash \Z''$ consists of the components of divisors in $\bar{\mathcal{Z}}_{m}$ whose Picard-Lefschetz transformations are of infinite order, and therefore
$\Z''$ is a Zariski open submanifold in $\bar{\mathcal{Z}}_{m}$.
Hence by the definition of $\Z'$, we know that $\Z''\subseteq \Z'$.

Conversely, for any point $q\in \mathcal{Z}_m'$ with image $u=\Phi_{\mathcal{Z}'_m}(q)\in D/\Gamma$, we can choose the points $q_{k}\in \mathcal{Z}_m$, $k=1,2,\cdots$ such that $q_{k}\longrightarrow q$ with images $u_{k}=\Phi_{\mathcal{Z}_m}(q_{k})\longrightarrow u$ as $k\longrightarrow \infty$. Since $\Phi_{\mathcal{Z}_m''}:\, \mathcal{Z}_m''\to D/\Gamma$ is proper, the sequence $$\{q_{k}\}_{k=1}^{\infty}\subset (\Phi_{\mathcal{Z}_m''})^{-1}(\{u_{k}\}_{k=1}^{\infty})$$ has a limit point $q$ in $\mathcal{Z}_m''$, that is to say $q\in \mathcal{Z}_m''$ and $\mathcal{Z}_m'\subseteq \mathcal{Z}_m''$.

Therefore we have proved that $\mathcal{Z}_m'=\mathcal{Z}_m''$ and $\mathcal{Z}_m'\setminus \mathcal{Z}_m$ consists of the points around which the Picard-Lefschetz transformations are trivial. From Theorem (9.5) in \cite{Griffiths3}, we get that the extended period map $$\Phi_{\mathcal{Z}'_m}:\, \mathcal{Z}_m'\to D/\Gamma$$ is a proper holomorphic mapping, which is called Griffiths extension of $\Phi_{\mathcal{Z}_m}$ by Sommese in \cite{Sommese}. Now we only need to prove that $\mathcal{Z}_m'=\mathcal{Z}_m^H$.

Since we already have the extension $\Phi_{\mathcal{Z}_m}^{H}:\, \mathcal{Z}_m^H\to D/\Gamma$, we see that $\mathcal{Z}_m^H\subseteq \mathcal{Z}_m'$ by the definition of $\mathcal{Z}_m'$. Conversely the points in $\mathcal{Z}_m'\setminus \mathcal{Z}_m$ are mapped into $D/\Gamma$, hence have finite distance from some fixed point in $\mathcal{Z}_m$ with respect to the Hodge metric. Therefore $\mathcal{Z}_m'\subseteq \mathcal{Z}_m^H$.
\end{proof}

%

Let $\mathcal{T}^{H}_{m}$ be the universal cover of $\mathcal{Z}^H_{m}$ with the universal covering map $$\pi_{m}^H:\, \mathcal{T}^{H}_{m}\rightarrow \mathcal{Z}_m^H.$$ Thus $\mathcal{T}^H_{m}$ is a connected and simply connected complete complex manifold with respect to the Hodge metric. We will call $\mathcal{T}^H_{m}$ the {Hodge metric completion space with level $m$ structure}.
Recall that the Teichm\"uller space $\mathcal{T}$ is the universal cover of the moduli space $\mathcal{Z}_m$ with the universal covering map denoted by $\pi_m:\, \mathcal{T}\to \mathcal{Z}_m$.
Thus we have the following commutative diagram
\begin{align}\label{cover maps} \xymatrix{\mathcal{T}\ar[r]^{i_{m}}\ar[d]^{\pi_m}&\mathcal{T}^H_{m}\ar[d]^{\pi_{m}^H}\ar[r]^{{\Phi}^{H}_{m}}&D\ar[d]^{\pi_D}\\
\mathcal{Z}_m\ar[r]^{i}&\mathcal{Z}^H_{m}\ar[r]^{{\Phi}_{{\mathcal{Z}_m}}^H}&D/\Gamma,
}
\end{align}
where $i$ is the inclusion map, ${i}_{m}$ is a lifting map of $i\circ\pi_m$, $\pi_D$ is the covering map and ${\Phi}^{H}_{m}$ is a lifting map of ${\Phi}_{{\mathcal{Z}_m}}^H\circ \pi_{m}^H$. In particular, $\Phi^H_{m}$ is a continuous map from $\mathcal{T}^H_{m}$ to $D$.

We notice that the lifting maps ${i}_{{\mathcal{T}}}$ and ${\Phi}^H_{m}$ are not unique, but Lemma A.1 in the appendix of \cite{LS} implies that there exist suitable choices of $i_{m}$ and $\Phi_{m}^H$ such that $\Phi=\Phi^H_{m}\circ i_{m}$. We will fix the choices of $i_{m}$ and $\Phi^{H}_m$ such that $\Phi=\Phi^H_{m}\circ i_m$ in the rest of the paper. Without confusion of notations, we denote $\mathcal{T}_m:=i_{m}(\mathcal{T})$ and the restriction map $\Phi_m=\Phi^H_{m}|_{\mathcal{T}_m}$. Then we also have $\Phi=\Phi_m\circ i_m$.
\begin{proposition}\label{opend}The image $\mathcal{T}_m$ equals to the preimage $(\pi^H_{m})^{-1}(\mathcal{Z}_{m})$, and $i_m: \, \T \to \T_m$ is a covering map.
\end{proposition}
\begin{proof}
From diagram (\ref{cover maps}), we have that $\pi^H_{m}(i_m(\mathcal{T}))=i(\pi_m(\mathcal{T}))=\mathcal{Z}_{m}.$
Therefore, $\mathcal{T}_m=i_m(\mathcal{T})\subseteq (\pi^H_{m})^{-1}(\mathcal{Z}_{m})$. For the other direction, we need to show that for any point $q\in (\pi^H_{m})^{-1}(\mathcal{Z}_{m})\subseteq\mathcal{T}^H_{m}$, we have $$q\in i_m(\mathcal{T})=\mathcal{T}_m.$$

Let $p=\pi^H_{m}(q)\in \mathcal{Z}_{m}$, Let $x_1\in \pi_m^{-1}(p)\subseteq \mathcal{T}$ be an arbitrary point, then $\pi^H_{m}(i_m(x_1))=i(\pi_m(x_1))=p$ and hence $i_m(x_1)\in (\pi^H_{m})^{-1}(p)\subseteq \mathcal{T}^H_{m}$.

As $\mathcal{T}^H_{m}$ is a connected complex manifold, $\mathcal{T}^H_{m}$ is path connected. Therefore, for $i_m(x_1), q\in \mathcal{T}^H_{m}$, there exists a curve $\gamma:\,[0,1]\to\mathcal{T}^H_{m}$ with $\gamma(0)=i_m(x_1)$ and $\gamma(1)=q$. Then the composition $\pi^H_{m}\circ \gamma$ gives a loop on $\mathcal{Z}^H_{m}$ with $\pi^H_{m}\circ \gamma(0)=\pi^H_{m}\circ \gamma(1)=p$. Lemma A.2 in the appendix of \cite{LS} implies that there is a loop $\Gamma$ on $\mathcal{Z}_m$ with $\Gamma(0)=\Gamma(1)=p$ such that
\begin{align*}
[i\circ \Gamma]=[\pi^H_{m}\circ \gamma]\in \pi_1(\mathcal{Z}^H_{m}),
\end{align*} where $\pi_1(\mathcal{Z}^H_{m})$ denotes the fundamental group of $\mathcal{Z}^H_{m}$.

Because $\mathcal{T}$ is the universal cover of $\mathcal{Z}_m$, there is a unique lifting map $\tilde{\Gamma}:\,[0,1]\to \mathcal{T}$ with $\tilde{\Gamma}(0)=x_1$ and $\pi_m\circ\tilde{\Gamma}=\Gamma$.
Again since $\pi^H_{m}\circ i_m=i\circ \pi_m$, we have
\begin{align*}
\pi^H_{m}\circ i_m\circ \tilde{\Gamma}=i\circ \pi_m\circ\tilde{\Gamma}=i\circ\Gamma:\,[0,1]\to \mathcal{Z}_m.
\end{align*}
Therefore $[\pi^H_{m}\circ i_m\circ \tilde{\Gamma}]=[i\circ \Gamma]\in \pi_1(\mathcal{Z}_m)$, and the two curves $i_m\circ \tilde{\Gamma}$ and $\gamma$ have the same starting points $i_m\circ \tilde{\Gamma}(0)=\gamma(0)=i_m(x_1)$. Then the homotopy lifting property of the covering map $\pi^H_{m}$ implies that $i_m\circ \tilde{\Gamma}(1)=\gamma(1)=q$. Therefore, $q\in i_m(\mathcal{T})$, as needed.

To show that $i_m$ is a covering map, note that for any small enough open neighborhood $U$ in $\T_{m}$, the restricted map $$\pi_m^H|_{U} :\, U \to V=\pi_m^H(U)\subset \Z$$ is biholomorphic, and there exists a disjoint union $\cup_{i}V_{i}$ of open subsets in $\T$ such that $\cup_{i}V_{i}=(\pi_{m})^{-1}(V)$ and $\pi_{m}|_{V_{i}}:\, V_{i}\to V$ is biholomorphic. From the commutativity of the diagram \eqref{cover maps}, we have that $\cup_{i}V_{i}=(i_{m})^{-1}(U)$ and $$i_{m}|_{V_{i}}:\, V_{i}\to U$$ is biholomorphic. Therefore $i_m:\, \T \to \T_m$ is also a covering map.
\end{proof}

We remark  that as the lifts of the holomorphic maps $i$ and $\Phi_{\mathcal{Z}_m}^H$ to universal covers, both $i_m$ and ${\Phi}^{H}_{m}$ are easily seen to be holomorphic maps.   This fact can also be proved  by using Theorem 9.6 in \cite{Griffiths3} and the Riemann extension theorem.

Actually we have proved that $i_m$ is a covering map, so it is holomorphic. Another proof to show $i_m$ is holomorphic is to use the geometric structures of $\ZZ$ which we briefly describe as follows.

Proposition \ref{opend} implies that $\T_m$ is an open complex submanifold of $\T_{m}^{H}$ and $\text{codim}_{\C}(\T_{m}^{H}\setminus \T_m)\ge 1$.
Since $\Phi=\Phi^H_{m}\circ i_m$ is holomorphic, $$\Phi_{m}=\Phi_{m}^{H}|_{\T_m} : \, \T_m\to D$$ is also holomorphic. Since $\P_m$ has a continuous extension $\P^H_m$, we apply the Riemann extension theorem, for which  we only need to
show that $\TT\setminus \T_m$ is an analytic subvariety of $\TT$.
In fact, since  $\bar{\mathcal{Z}}_m\setminus\Z$ is a union of simple normal crossing divisors, from Lemma \ref{trivial monodromy}, we see that the subset $\ZZ\setminus\Z$ consists of normal crossing divisors in $\ZZ$ around which the monodromy group is trivial.  Therefore $\ZZ\setminus\Z$ is an analytic subvariety of $\ZZ$. On the other hand, from Proposition \ref{opend}, we know that  $\TT\setminus \T_m$ is the inverse image of $\ZZ\setminus \Z$ under the covering map $$\pi^{H}_m:\, \TT \to \ZZ,$$ this implies that $\TT\setminus \T_m$ is an analytic subvariety of $\TT$.

Note that the fact that $\ZZ\setminus\Z$, therefore $\TT\setminus \T_m$, is analytic subvariety is contained in Theorem 9.6 in \cite{Griffiths3}. We refer the readers to Page 156 of \cite{Griffiths3} for the related discussions.

We will call $\P^H_m:\, \T_m^H \to D$ the extended period map.

\begin{lemma}\label{extendedtransversality}
The extended period map $\Phi_m^H :\, \T_m^H \to D$ satisfies the Griffiths
transversality.
\end{lemma}
\begin{proof}
Let $\text{T}^{1,0}_hD$ be the horizontal subbundle.
Since $\Phi_m^H : \T_m^H \to D$ is a holomorphic map, the tangent map
$$d\Phi_m^H : \, \text{T}^{1,0}\T_m^H \to \text{T}^{1,0}D$$
is at least continuous. We only need to show that the image of $(d\Phi_m^H)$ is contained in the horizontal tangent bundle $\text{T}^{1,0}_hD$.

Since $\text{T}^{1,0}_hD$ is closed in $\text{T}^{1,0}D$, $(d\Phi_m^H)^{-1}(\text{T}^{1,0}_hD)$ is closed in $\text{T}^{1,0}\T_m^H$. But $\Phi_m^H|_{\T_m}$ satisfies the Griffiths transversality, i.e. $(d\Phi_m^H)^{-1}(\text{T}^{1,0}_hD)$ contains $\text{T}^{1,0}\T_m$, which is open in $\text{T}^{1,0}\T_m^H$. Hence $(d\Phi_m^H)^{-1}(\text{T}^{1,0}_hD)$ contains the closure of $\text{T}^{1,0}\T_m$, which is $\text{T}^{1,0}\T_m^H$.
\end{proof}

\section{Boundedness of the period maps}\label{boundedness of period maps}
In Section \ref{preliminary}, we review some properties of the period domain from Lie group and Lie algebra point of view.
We fix a base point $p\in\mathcal{T}$ and denote $o=\Phi(p)\in D$. We consider the unipotent group $N_+\subseteq \check{D}$ by identifying $N_+$ to its orbit. We will show that $N_+$ is complex Euclidean space with an inner product induced 
from the Hodge metric on $D$. 


In Section \ref{boundedness of Phi}, we define $\check{\T}=\Phi^{-1}(N_+\cap D)$, and consider, in terms of the notations of Lie algebras in Section \ref{preliminary},  $$\mathfrak{p}_+=\mathfrak{p}/(\mathfrak{p}\cap
\mathfrak{b})=\mathfrak{p}\cap\mathfrak{n}_+ \subseteq
\mathfrak{n}_+$$
and $\text{exp}(\mathfrak{p}_+)\subseteq N_+$ as complex Euclidean subspaces. 

Let  $$P_+ :\, N_+\cap D \to \text{exp}(\mathfrak{p}_+)\cap D$$ be the induced projection
map and $\P_+=P_+ \circ \Phi|_{\check{\T}}$.  We first prove that the image of $$\P_+ :\, \check{\T} \to \text{exp}(\mathfrak{p}_+)\cap
D$$ is bounded in $\text{exp}(\mathfrak{p}_+)$ with respect to the Euclidean metric on $\text{exp}(\mathfrak{p}_+)\subseteq N_+$. In fact we actually prove that $\text{exp}(\mathfrak{p}_+)\cap
D$ is bounded in $\text{exp}(\mathfrak{p}_+)$. Then we prove the boundedness of the image of $$\Phi:\, \check{\T} \to N_{+}\cap D$$ in $N_+$ by proving the finiteness of the map $P_+|_{\Phi(\check{\T})}$.

Finally we prove that $\T\setminus \check{\T}$ is an analytic subvariety of $\T$ with $\mbox{codim}_\C (\T\setminus \check{\T})\ge 1$, and apply the Riemann extension theorem to get the boundedness of the image of $$\Phi:\, {\T} \to N_{+}\cap D$$ in the complex Euclidean space $N_+.$


\subsection{Preliminary}\label{preliminary}Let us briefly recall some properties of the period domain from Lie group and Lie algebra point of view. All of the results in this section is well-known to the experts in the subject. The purpose to give details is to fix notations. One may either skip this section or refer to \cite{GS} and \cite{schmid1} for most of the details.

The orthogonal group of the bilinear form $Q$ in the definition of Hodge structure is a linear algebraic group, defined over $\mathbb{Q}$. Let us simply denote $H_{\mathbb{C}}=H_{pr}^n(M, \mathbb{C})$ and $H_{\mathbb{R}}=H_{pr}^n(M, \mathbb{R})$. The group of the $\mathbb{C}$-rational points is
\begin{align*}
G_{\mathbb{C}}=\{ g\in GL(H_{\mathbb{C}})|~ Q(gu, gv)=Q(u, v) \text{ for all } u, v\in H_{\mathbb{C}}\},
\end{align*}
which acts on $\check{D}$ transitively. The group of real points in $G_{\mathbb{C}}$ is
\begin{align*}
G_{\mathbb{R}}=\{ g\in GL(H_{\mathbb{R}})|~ Q(gu, gv)=Q(u, v) \text{ for all } u, v\in H_{\mathbb{R}}\},
\end{align*}
which acts transitively on $D$ as well.

Consider the period map $\Phi:\, \mathcal{T}\rightarrow D$. Fix a point $p\in \mathcal{T}$ with the image $o:=\Phi(p)=\{F^n_p\subset \cdots\subset F^{0}_p\}\in D$. The points $p\in \mathcal{T}$ and $o\in D$ may be referred as the base points or the reference points. A linear transformation $g\in G_{\mathbb{C}}$ preserves the base point if and only if $gF^k_p=F^k_p$ for each $k$. Thus it gives the identification
\begin{align*}
\check{D}\simeq G_\mathbb{C}/B\quad\text{with}\quad B=\{ g\in G_\mathbb{C}|~ gF^k_p=F^k_p, \text{ for any } k\}.
\end{align*}
Similarly, one obtains an analogous identification
\begin{align*}
D\simeq G_\mathbb{R}/V\hookrightarrow \check{D}\quad\text{with}\quad V=G_\mathbb{R}\cap B,
\end{align*}
where the embedding corresponds to the inclusion $
G_\mathbb{R}/V=G_{\mathbb{R}}/G_\mathbb{R}\cap B\subseteq G_\mathbb{C}/B.$
The Lie algebra $\mathfrak{g}$ of the complex Lie group $G_{\mathbb{C}}$ can be described as
\begin{align*}
\mathfrak{g}&=\{X\in \text{End}(H_\mathbb{C})|~ Q(Xu, v)+Q(u, Xv)=0, \text{ for all } u, v\in H_\mathbb{C}\}.
\end{align*}
It is a simple complex Lie algebra, which contains
$\mathfrak{g}_0=\{X\in \mathfrak{g}|~ XH_{\mathbb{R}}\subseteq H_\mathbb{R}\}$
as a real form, i.e. $\mathfrak{g}=\mathfrak{g}_0\oplus i \mathfrak{g}_0.$ With the inclusion $G_{\mathbb{R}}\subseteq G_{\mathbb{C}}$, $\mathfrak{g}_0$ becomes Lie algebra of $G_{\mathbb{R}}$. One observes that the reference Hodge structure $\{H^{k, n-k}_p\}_{k=0}^n$ of $H^n(M,{\mathbb{C}})$ induces a Hodge structure of weight zero on
$\text{End}(H^n(M, {\mathbb{C}})),$ namely,
\begin{align*}
\mathfrak{g}=\bigoplus_{k\in \mathbb{Z}} \mathfrak{g}^{k, -k}\quad\text{with}\quad\mathfrak{g}^{k, -k}=\{X\in \mathfrak{g}|XH_p^{r, n-r}\subseteq H_p^{r+k, n-r-k}\}.
\end{align*}

Since the Lie algebra $\mathfrak{b}$ of $B$ consists of those $X\in \mathfrak{g}$ that preserves the reference Hodge filtration $\{F_p^n\subset\cdots\subset F^0_p\}$, one thus has
\begin{align*}
 \mathfrak{b}=\bigoplus_{k\geq 0} \mathfrak{g}^{k, -k}.
\end{align*}
The Lie algebra $\mathfrak{v}_0$ of $V$ is
$\mathfrak{v}_0=\mathfrak{g}_0\cap \mathfrak{b}=\mathfrak{g}_0\cap \mathfrak{b}\cap\bar{\mathfrak{b}}=\mathfrak{g}_0\cap \mathfrak{g}^{0, 0}.$
With the above isomorphisms, the holomorphic tangent space of $\check{D}$ at the base point is naturally isomorphic to $\mathfrak{g}/\mathfrak{b}$.

Let us consider the nilpotent Lie subalgebra $\mathfrak{n}_+:=\oplus_{k\geq 1}\mathfrak{g}^{-k,k}$. Then one gets the holomorphic isomorphism $\mathfrak{g}/\mathfrak{b}\cong \mathfrak{n}_+$. We take the unipotent group $N_+=\exp(\mathfrak{n}_+)$.

As $\text{Ad}(g)(\mathfrak{g}^{k, -k})$ is in  $\bigoplus_{i\geq
k}\mathfrak{g}^{i, -i} \text{ for each } g\in B,$ the subspace
$\mathfrak{b}\oplus\linebreak \mathfrak{g}^{-1, 1}/\mathfrak{b}\subseteq
\mathfrak{g}/\mathfrak{b}$ defines an Ad$(B)$-invariant subspace. By
left translation via $G_{\mathbb{C}}$,
$\mathfrak{b}\oplus\mathfrak{g}^{-1,1}/\mathfrak{b}$ gives rise to a
$G_{\mathbb{C}}$-invariant holomorphic subbundle of the holomorphic
tangent bundle. It will be denoted by $\text{T}^{1,0}_{h}\check{D}$,
and will be referred to as the horizontal tangent subbundle. One can
check that this construction does not depend on the choice of the
base point. The horizontal tangent subbundle, restricted to $D$,
determines a subbundle $\text{T}_{h}^{1, 0}D$ of the holomorphic
tangent bundle $\text{T}^{1, 0}D$ of $D$. 

The
$G_{\mathbb{C}}$-invariance of $\text{T}^{1, 0}_{h}\check{D}$
implies the $G_{\mathbb{R}}$-invariance of $\text{T}^{1, 0}_{h}D$.
Note that the horizontal tangent subbundle $\text{T}_{h}^{1, 0}D$
can also be constructed as the associated bundle of the principle
bundle $V\to G_\mathbb{R} \to D$ with the adjoint representation of
$V$ on the space
$\mathfrak{b}\oplus\mathfrak{g}^{-1,1}/\mathfrak{b}$. 

 As another
interpretation of the horizontal bundle in terms of the Hodge
bundles ${F}^{k}\to \check{D}$, $0\le k \le n$, one has
\begin{align}\label{horizontal}
\text{T}^{1, 0}_{h}\check{D}\simeq \text{T}^{1, 0}\check{D}\cap \bigoplus_{k=1}^{n}\text{Hom}({F}^{k}/{F}^{k+1}, {F}^{k-1}/{F}^{k}).
\end{align}

A holomorphic mapp $\Psi: \,M\rightarrow \check{D}$ of a complex manifold $M$ into $\check{D}$ is called {horizontal}, if the tangent map $$d\Psi:\, \text{T}^{1,0}M \to \text{T}^{1,0}\check{D}$$ takes values in $\text{T}^{1,0}_h\check{D}$.
The period map $\Phi: \, \T\rightarrow D$ is horizontal due to Griffiths transversality.

Let us introduce the notion of an adapted basis for the given Hodge decomposition or the Hodge filtration.
For any $p\in \mathcal{T}$ and $f^k=\dim F^k_p$ for any $0\leq k\leq n$, we call a basis $$\xi=\left\{ \xi_0, \xi_1, \cdots, \xi_N, \cdots, \xi_{f^{k+1}}, \cdots, \xi_{f^k-1}, \cdots,\xi_{f^{2}}, \cdots, \xi_{f^{1}-1}, \xi_{f^{0}-1} \right\}$$ of $H^n(M_p, \mathbb{C})$ an \textit{adapted basis for the given Hodge decomposition}
$$H^n(M_p, {\mathbb{C}})=H^{n, 0}_p\oplus H^{n-1, 1}_p\oplus\cdots \oplus H^{1, n-1}_p\oplus H^{0, n}_p, $$
if it satisfies $
H^{k, n-k}_p=\text{Span}_{\mathbb{C}}\left\{\xi_{f^{k+1}}, \cdots, \xi_{f^k-1}\right\}$ with $\dim H^{k,n-k}_p=f^k-f^{k+1}$.

We call a basis
\begin{align*}
\zeta=\{\zeta_0, \zeta_1, \cdots, \zeta_N, \cdots, \zeta_{f^{k+1}}, \cdots, \zeta_{f^k-1}, \cdots, \zeta_{f^2}, \cdots, \zeta_{f^1-0}, \zeta_{f^0-1}\}
\end{align*}
of $H^n(M_p, \mathbb{C})$ an \textit{adapted basis for the given filtration}
\begin{align*}
F^n\subseteq F^{n-1}\subseteq\cdots\subseteq F^0
\end{align*}
if it satisfies $F^{k}=\text{Span}_{\mathbb{C}}\{\zeta_0, \cdots, \zeta_{f^k-1}\}$ with $\text{dim}_{\mathbb{C}}F^{k}=f^k$.
Moreover, unless otherwise pointed out, the matrices in this paper are $m\times m$ matrices, where $m=f^0$. 

The blocks of the $m\times m$ matrix $T$ is set as follows:
for each $0\leq \alpha, \beta\leq n$, the $(\alpha, \beta)$-th block $T^{\alpha, \beta}$ is
\begin{align}\label{block}
T^{\alpha, \beta}=\left[T_{ij}(\tau)\right]_{f^{-\alpha+n+1}\leq i \leq f^{-\alpha+n}-1, \ f^{-\beta+n+1}\leq j\leq f^{-\beta+n}-1},
\end{align} where $T_{ij}$ is the entries of
the matrix $T$, and $f^{n+1}$ is defined to be zero. In particular, $T =[T^{\alpha,\beta}]$ is called a \textit{block lower triangular matrix} if
$T^{\alpha,\beta}=0$ whenever $\alpha<\beta$.
\begin{remark}\label{N+inD}
We remark that by fixing a base point, we can identify the above quotient Lie groups or Lie algebras with their orbits in the corresponding quotient Lie algebras or Lie groups. For example, $$\mathfrak{n}_+\cong \mathfrak{g}/\mathfrak{b},\ \ \mathfrak{g}^{-1,1}\cong\mathfrak{b}\oplus \mathfrak{g}^{-1,1}/\mathfrak{b},$$ and $$N_+\cong N_+B/B\subseteq \check{D},$$ since $N_{+}\cap B=\{Id\}.$

We can also identify a point $\Phi(p)=\{ F^n_p\subseteq F^{n-1}_p\subseteq \cdots \subseteq F^{0}_p\}\in D$ with its Hodge decomposition $\bigoplus_{k=0}^n H^{k, n-k}_p$, and thus with any fixed adapted basis of the corresponding Hodge decomposition for the base point, we have matrix representations of elements in the above Lie groups and Lie algebras. For example, elements in $N_+$ can be realized as nonsingular block lower triangular matrices with identity blocks in the diagonal; elements in $B$ can be realized as nonsingular block upper triangular matrices.
\end{remark}

              We shall review and collect some facts about the structure of simple Lie algebra $\mathfrak{g}$ in our case. Again one may refer to \cite{GS} and \cite{schmid1} for more details. Let $\theta: \,\mathfrak{g}\rightarrow \mathfrak{g}$ be the Weil operator, which is defined by
\begin{align*}\theta(X)=(-1)^p X\quad \text{ for } X\in \mathfrak{g}^{p,-p}.
\end{align*}
Then $\theta$ is an involutive automorphism of $\mathfrak{g}$, and is defined over $\mathbb{R}$. The $(+1)$ and $(-1)$ eigenspaces of $\theta$ will be denoted by $\mathfrak{k}$ and $\mathfrak{p}$ respectively. Moreover, set
\begin{align*}
\mathfrak{k}_0=\mathfrak{k}\cap \mathfrak{g}_0, \quad \mathfrak{p}_0=\mathfrak{p}\cap \mathfrak{g}_0.
\end{align*}The fact that $\theta$ is an involutive automorphism implies
\begin{align*}
\mathfrak{g}=\mathfrak{k}\oplus\mathfrak{p}, \quad \mathfrak{g}_0=\mathfrak{k}_0\oplus \mathfrak{p}_0, \quad
[\mathfrak{k}, \,\mathfrak{k}]\subseteq \mathfrak{k}, \quad [\mathfrak{p},\,\mathfrak{p}]\subseteq\mathfrak{p}, \quad [\mathfrak{k}, \,\mathfrak{p}]\subseteq \mathfrak{p}.
\end{align*}
Let us consider
$\mathfrak{g}_c=\mathfrak{k}_0\oplus \sqrt{-1}\mathfrak{p}_0.$
Then $\mathfrak{g}_c$ is a real form for $\mathfrak{g}$.
Recall that the killing form $B(\cdot, \,\cdot)$ on $\mathfrak{g}$ is defined by
\begin{align*}B(X,Y)=\text{Trace}(\text{ad}(X)\circ\text{ad}(Y)) \quad \text{for } X,Y\in \mathfrak{g}.
\end{align*}
A semisimple Lie algebra is compact if and only if the Killing form is negative definite. Thus it is not hard to check that $\mathfrak{g}_c$ is actually a compact real form of $\mathfrak{g}$, while $\mathfrak{g}_0$ is a non-compact real form.

Recall that $G_{\mathbb{R}}\subseteq G_{\mathbb{C}}$ is the subgroup which correpsonds to the subalgebra $\mathfrak{g}_0\subseteq\mathfrak{g}$. Let us denote the connected subgroup $G_c\subseteq G_{\mathbb{C}}$ which corresponds to the subalgebra $\mathfrak{g}_c\subseteq\mathfrak{g}$. Let us denote the complex conjugation of $\mathfrak{g}$ with respect to the compact real form by $\tau_c$, and the complex conjugation of $\mathfrak{g}$ with respect to the noncompact real form by $\tau_0$.

The intersection $K=G_c\cap G_{\mathbb{R}}$ is then a compact subgroup of $\mathbb{G}_{\mathbb{R}}$, whose Lie algebra is $\mathfrak{k}_0=\mathfrak{g}_{\mathbb{R}}\cap \mathfrak{g}_c$.
With the above notations, Schmid showed in \cite{schmid1} that $K$ is a maximal compact subgroup of $G_{\mathbb{R}}$, and it meets every connected component of $G_{\mathbb{R}}$. Moreover, $V=G_{\mathbb{R}}\cap B\subseteq K$.

We know that in our cases, $G_{\mathbb{C}}$ s a connected simple Lie group, $B$ a parabolic subgroup in $G_{\mathbb{C}}$ with $\mathfrak{b}$ as its Lie algebra.
The Lie algebra $\mathfrak{b}$ has a unique maximal nilpotent ideal $\mathfrak{n}_-$. It is not hard to see that
\begin{align*}\mathfrak{g}_c\cap \mathfrak{n}_-=\mathfrak{n}_-\cap \tau_c(\mathfrak{n}_-)=0.
\end{align*}
By using Bruhat's lemma, one concludes $\mathfrak{g}$ is spanned by the parabolic subalgebras $\mathfrak{b}$ and $\tau_c(\mathfrak{b})$. Moreover $\mathfrak{v}=\mathfrak{b}\cap \tau_c(\mathfrak{b})$, $\mathfrak{b}=\mathfrak{v}\oplus \mathfrak{n}_-$. In particular, we also have
$$\mathfrak{n}_+=\tau_c(\mathfrak{n}_-).$$

As remarked in $\S1$ in \cite{GS} of Griffiths and Schmid, one gets that $\mathfrak{v}$ must have the same rank of $\mathfrak{g}$ as $\mathfrak{v}$ is  the intersection of the two parabolic subalgebras $\mathfrak{b}$ and $\tau_c(\mathfrak{b})$. Moreover, $\mathfrak{g}_0$ and $\mathfrak{v}_0$ are also of equal rank, since they are real forms of $\mathfrak{g}$ and $\mathfrak{v}$ respectively. Therefore, we can choose a Cartan subalgebra $\mathfrak{h}_0$ of $\mathfrak{g}_0$ such that $\mathfrak{h}_0\subseteq \mathfrak{v}_0$ is also a Cartan subalgebra of $\mathfrak{v}_0$. Since $\mathfrak{v}_0\subseteq \mathfrak{k}_0$, we also have $\mathfrak{h}_0\subseteq\mathfrak{k}_0$. A Cartan subalgebra of a real Lie algebra is a maximal abelian subalgebra. Therefore $\mathfrak{h}_0$ is also a maximal abelian subalgebra of $\mathfrak{k}_0$, hence $\mathfrak{h}_0$ is a Cartan subalgebra of $\mathfrak{k}_0$. Summarizing the above, we get
\begin{proposition}\label{cartaninK}There exists a Cartan subalgebra $\mathfrak{h}_0$ of $\mathfrak{g}_0$ such that $\mathfrak{h}_0\subseteq \mathfrak{v}_0\subseteq \mathfrak{k}_0$ and $\mathfrak{h}_0$ is also a Cartan subalgebra of $\mathfrak{k}_0$. \end{proposition}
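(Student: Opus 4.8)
The plan is to build the desired Cartan subalgebra inside $\mathfrak{k}_0$ and then verify, using the relation $\mathfrak{v}_0 = \mathfrak{g}_0 \cap \mathfrak{g}^{0,0}$, that it lands inside $\mathfrak{v}_0$. First I would recall that since $K$ is a compact Lie group with Lie algebra $\mathfrak{k}_0$, we may choose a maximal torus of $K$, whose Lie algebra $\mathfrak{h}_0$ is then a maximal abelian subalgebra of $\mathfrak{k}_0$. The key point is that for a maximal compact subgroup $K$ of the reductive group $G_{\mathbb{R}}$, a maximal abelian subalgebra of $\mathfrak{k}_0$ that consists of semisimple elements (which is automatic here, since everything in $\mathfrak{k}_0$ is $\mathrm{ad}$-semisimple by compactness) is in fact a Cartan subalgebra of $\mathfrak{g}_0$ precisely when $G_{\mathbb{R}}$ has equal rank with $K$, i.e. when $D = G_{\mathbb{R}}/V$ carries invariant complex structures. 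In our situation this equal-rank condition holds: the reference Hodge structure gives the weight-zero Hodge structure $\mathfrak{g} = \bigoplus_k \mathfrak{g}^{k,-k}$, and the existence of the horizontal holomorphic tangent bundle (equivalently, the fact that $D$ is a period domain) forces $\mathrm{rank}\,\mathfrak{g}_0 = \mathrm{rank}\,\mathfrak{k}_0$. So the strategy is: (i) pick $\mathfrak{h}_0 \subseteq \mathfrak{k}_0$ a maximal abelian (hence Cartan) subalgebra of $\mathfrak{k}_0$; (ii) show $\mathfrak{h}_0$ is a Cartan subalgebra of $\mathfrak{g}_0$; (iii) show $\mathfrak{h}_0 \subseteq \mathfrak{v}_0$.

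For step (ii), the argument I would give is that the centralizer $\mathfrak{z}_{\mathfrak{g}_0}(\mathfrak{h}_0)$ is a $\theta$-stable subalgebra, hence decomposes as $(\mathfrak{z}_{\mathfrak{g}_0}(\mathfrak{h}_0) \cap \mathfrak{k}_0) \oplus (\mathfrak{z}_{\mathfrak{g}_0}(\mathfrak{h}_0) \cap \mathfrak{p}_0)$. The $\mathfrak{k}_0$-part is just $\mathfrak{z}_{\mathfrak{k}_0}(\mathfrak{h}_0) = \mathfrak{h}_0$ by maximality of $\mathfrak{h}_0$ in $\mathfrak{k}_0$. For the $\mathfrak{p}_0$-part, one invokes the equal-rank property: a standard fact about symmetric pairs of equal rank is that $\mathfrak{z}_{\mathfrak{p}_0}(\mathfrak{h}_0) = 0$ when $\mathfrak{h}_0$ is a maximal torus of $\mathfrak{k}_0$ (otherwise one could enlarge $\mathfrak{h}_0$ to a larger torus, using that elements of $\mathfrak{p}_0$ commuting with $\mathfrak{h}_0$ generate a strictly larger abelian subalgebra — this needs the equal-rank input to rule out the "extra" $\mathfrak{p}$-directions). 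Hence $\mathfrak{z}_{\mathfrak{g}_0}(\mathfrak{h}_0) = \mathfrak{h}_0$, so $\mathfrak{h}_0$ is self-centralizing and abelian, i.e. a Cartan subalgebra of $\mathfrak{g}_0$.

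For step (iii), I recall from the excerpt that $V = G_{\mathbb{R}} \cap B \subseteq K$, so at the Lie algebra level $\mathfrak{v}_0 = \mathfrak{g}_0 \cap \mathfrak{b} = \mathfrak{g}_0 \cap \mathfrak{g}^{0,0}$, and this is exactly the Lie algebra of the compact group $V$, which is a closed subgroup of $K$. A maximal torus of $K$ can always be chosen to pass through any prescribed torus; but more directly, one should choose $\mathfrak{h}_0$ at the outset to be a maximal abelian subalgebra of $\mathfrak{k}_0$ that contains a maximal abelian subalgebra of $\mathfrak{v}_0$. The subtlety is whether such an $\mathfrak{h}_0$ is automatically inside $\mathfrak{v}_0$. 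The cleanest route, which is the one Griffiths–Schmid indicate, is to use that the Weil operator $\theta$ lies in (the image of) $V$ — indeed $\theta$ acts as a specific element of the center-type directions — so that the semisimple element defining $\theta$ can be taken inside $\mathfrak{h}_0$, and its centralizer in $\mathfrak{g}_0$ is precisely $\mathfrak{v}_0$ when $D$ is a classifying space of Hodge structures of the relevant weight; chasing $\mathfrak{h}_0 \subseteq \mathfrak{z}_{\mathfrak{g}_0}(\theta\text{-grading element}) = \mathfrak{v}_0$ gives the containment $\mathfrak{h}_0 \subseteq \mathfrak{v}_0 \subseteq \mathfrak{k}_0$.

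The main obstacle I anticipate is establishing and correctly using the equal-rank condition $\mathrm{rank}\,\mathfrak{g}_0 = \mathrm{rank}\,\mathfrak{k}_0$ — all three steps hinge on it, and it is precisely the structural feature distinguishing period domains from general homogeneous spaces $G_{\mathbb{R}}/H$. One must carefully exhibit the grading element (the semisimple element of $\mathfrak{g}_0$ whose $\mathrm{ad}$-eigenvalue decomposition recovers $\mathfrak{g} = \bigoplus \mathfrak{g}^{k,-k}$ up to the Weil-operator sign convention) and confirm it lies in $\mathfrak{v}_0$, which is what simultaneously forces the equal-rank equality and pins $\mathfrak{h}_0$ inside $\mathfrak{v}_0 \subseteq \mathfrak{k}_0$. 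Everything else is then routine structure theory of $\theta$-stable subalgebras in a reductive symmetric pair.
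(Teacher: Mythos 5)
The paper offers no proof of this proposition: it is stated as a remark imported from Section~1 of Griffiths--Schmid \cite{GS}, so there is nothing in the text to compare your argument against step by step. Your proposal does contain the correct key idea, which is exactly the one behind the Griffiths--Schmid remark: there is a real semisimple element $\xi\in\mathfrak{v}_0$ --- the infinitesimal generator of the circle acting by $z^{p-q}$ on $H^{p,q}$, i.e.\ $\sqrt{-1}$ times the grading element of the weight-zero Hodge structure on $\mathfrak{g}$ --- which is central in $\mathfrak{v}_0$ and whose centralizer in $\mathfrak{g}_0$ is precisely $\mathfrak{g}_0\cap\mathfrak{g}^{0,0}=\mathfrak{v}_0$. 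Once $\xi$ is exhibited the proof is three lines: take $\mathfrak{h}_0$ maximal abelian in the compact Lie algebra $\mathfrak{v}_0$; centrality gives $\xi\in\mathfrak{h}_0$, hence $Z_{\mathfrak{g}_0}(\mathfrak{h}_0)\subseteq Z_{\mathfrak{g}_0}(\xi)=\mathfrak{v}_0$, and maximality inside $\mathfrak{v}_0$ then gives $Z_{\mathfrak{g}_0}(\mathfrak{h}_0)=\mathfrak{h}_0$; so $\mathfrak{h}_0$ is a Cartan subalgebra of $\mathfrak{g}_0$ contained in $\mathfrak{v}_0\subseteq\mathfrak{k}_0$, and being abelian and self-centralizing in $\mathfrak{g}_0$ it is in particular maximal abelian in $\mathfrak{k}_0$, i.e.\ a Cartan subalgebra of $\mathfrak{k}_0$.

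Two points in your write-up need repair. First, the logical order: you begin with an arbitrary maximal torus of $K$ and invoke the equal-rank property $\operatorname{rank}\mathfrak{g}_0=\operatorname{rank}\mathfrak{k}_0$ in step (ii) as an independent input, but --- as you concede in your last paragraph --- that property is a \emph{consequence} of exhibiting $\xi$, not a prerequisite, so steps (ii) and (iii) as written are circular until the grading element appears at the very end; moreover an arbitrary maximal torus of $\mathfrak{k}_0$ need not lie in $\mathfrak{v}_0$, so the construction must start from $\mathfrak{v}_0$ (your fallback of taking a maximal torus of $\mathfrak{k}_0$ containing a maximal abelian subalgebra of $\mathfrak{v}_0$ works, but it then collapses to a maximal abelian subalgebra of $\mathfrak{v}_0$ anyway). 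Second, do not conflate the Weil operator with the grading element: the Weil element $C=h(\sqrt{-1})$ induces $\mathrm{Ad}(C)=\theta$, acting by $(-1)^k$ on $\mathfrak{g}^{k,-k}$, and its centralizer in $\mathfrak{g}_0$ is $\mathfrak{k}_0$, not $\mathfrak{v}_0$; only the full circle $h(S^1)$, equivalently its generator $\xi$, has centralizer $\mathfrak{v}_0$. With these corrections your argument is sound and is, in substance, the standard one the paper delegates to \cite{GS}.
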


\begin{remark}As an alternate proof of Proposition \ref{cartaninK}, to show that $\mathfrak{k}_0$ and $\mathfrak{g}_0$ have equal rank, one realizes that that $\mathfrak{g}_0$ in our case is one of the following real simple Lie algebras: $\mathfrak{sp}(2l, \mathbb{R})$, $\mathfrak{so}(p, q)$ with $p+q$ odd, or $\mathfrak{so}(p, q)$ with $p$ and $q$ both even. One may refer to \cite{Griffiths1} and \cite{su} for more details.\end{remark}
Proposition \ref{cartaninK} implies that the simple Lie algebra $\mathfrak{g}_0$ in our case is a simple Lie algebra of first category as defined in \cite[$\S4$]{Sugi}. In the upcoming part, we will briefly derive the result of a simple Lie algebra of first category in \cite[Lemma 3]{Sugi1}. One may also refer to \cite[Lemma 2.2.12, Page ~141--142]{Xu} for the same result.

Let us still use the above notations of the Lie algebras we consider. By Proposition 4, we can take $\mathfrak{h}_0$ to be a Cartan subalgebra of $\mathfrak{g}$ such that $\mathfrak{h}_0\subseteq \mathfrak{v}_0\subseteq\mathfrak{k}_0$ and $\mathfrak{h}_0$ is also a Cartan subalgebra of $\mathfrak{k}_0$. Let us denote $\mathfrak{h}$ to be the complexification of $\mathfrak{h}_0$. Then $\mathfrak{h}$ is a Cartan subalgebra of $\mathfrak{g}$ such that $\mathfrak{h}\subseteq \mathfrak{v}\subseteq \mathfrak{k}$.

Write $\mathfrak{h}_0^*=\text{Hom}(\mathfrak{h}_0, \mathbb{R})$ and $\mathfrak{h}^*_{\mathbb{R}}=\sqrt{-1}\mathfrak{h}^*_0. $ Then $\mathfrak{h}^*_{\mathbb{R}}$ can be identified with $\mathfrak{h}_{\mathbb{R}}:=\sqrt{-1}\mathfrak{h}_0$ by duality using the restriction of the Killing form $B$ of $\mathfrak{g}$ to $\mathfrak{h}_{\mathbb{R}}$.
 Let $\rho\in\mathfrak{h}_{{\mathbb{R}}}^*\simeq \mathfrak{h}_{{\mathbb{R}}}$, one can define the following subspace of $\mathfrak{g}$
\begin{align*}
\mathfrak{g}^{\rho}=\{x\in \mathfrak{g}| [h, \,x]=\rho(h)x\quad\text{for all } h\in \mathfrak{h}\}.
\end{align*}
An element $\varphi\in\mathfrak{h}_{{\mathbb{R}}}^*\simeq\mathfrak{h}_{{\mathbb{R}}}$ is called a root of $\mathfrak{g}$ with respect to $\mathfrak{h}$ if $\mathfrak{g}^\varphi\neq \{0\}$.

Let $\Delta\subseteq \mathfrak{h}^{*}_{\mathbb{R}}\simeq\mathfrak{h}_{{\mathbb{R}}}$ denote the space of nonzero $\mathfrak{h}$-roots. Then each root space
\begin{align*}
\mathfrak{g}^{\varphi}=\{x\in\mathfrak{g}|[h,x]=\varphi(h)x \text{ for all } h\in \mathfrak{h}\}
\end{align*} belongs to some $\varphi\in \Delta$ is one-dimensional over $\mathbb{C}$, generated by a root vector $e_{{\varphi}}$.

Since the involution $\theta$ is a Lie-algebra automorphism fixing $\mathfrak{k}$, we have $$[h, \theta(e_{{\varphi}})]=\varphi(h)\theta(e_{{\varphi}})$$ for any $h\in \mathfrak{h}$ and $\varphi\in \Delta.$ Thus $\theta(e_{{\varphi}})$ is also a root vector belonging to the root $\varphi$, so $e_{{\varphi}}$ must be an eigenvector of $\theta$. It follows that there is a decomposition of the roots $\Delta$ into $\Delta_{\mathfrak{k}}\cup\Delta_{\mathfrak{p}}$ of compact roots and non-compact roots with root spaces $\mathbb{C}e_{{\varphi}}\subseteq \mathfrak{k}$ and $\mathfrak{p}$ respectively.
The adjoint representation of $\mathfrak{h}$ on $\mathfrak{g}$ determins a decomposition
\begin{align*}
\mathfrak{g}=\mathfrak{h}\oplus \sum_{\varphi\in\Delta}\mathfrak{g}^{\varphi}.
\end{align*}There also exists a Weyl base $\{h_i, 1\leq i\leq l;\,\,e_{{\varphi}}, \text{ for any } \varphi\in\Delta\}$ with $l=\text{rank}(\mathfrak{g})$ such that
$\text{Span}_{\mathbb{C}}\{h_1, \cdots, h_l\}=\mathfrak{h}$, $\text{Span}_{\mathbb{C}}\{e_{\varphi}\}=g^{\varphi}$ for each $\varphi\in \Delta$, and
\begin{align}
&\tau_c(h_i)=\tau_0(h_i)=-h_i, \quad \text{ for any }1\leq i\leq l;\nonumber \\
&\tau_c(e_{{\varphi}})=\tau_0(e_{{\varphi}})=-e_{{-\varphi}}, \quad \text{for any } \varphi\in \Delta_{\mathfrak{k}};\label{bar}\\
&\tau_0(e_{{\varphi}})=-\tau_c(e_{{\varphi}})=e_{{-\varphi}}, \quad\text{for any }\varphi\in \Delta_{\mathfrak{p}},\nonumber
\end{align}
and
\begin{align}
\mathfrak{k}_0&=\mathfrak{h}_0+\sum_{\varphi\in \Delta_{\mathfrak{k}}}\mathbb{R}(e_{{\varphi}}-e_{{-\varphi}})+\sum_{\varphi\in\Delta_{\mathfrak{k}}} \mathbb{R}\sqrt{-1}(e_{{\varphi}}+e_{{-\varphi}});\label{rootk}\\
\mathfrak{p}_0&=\sum_{\varphi\in \Delta_{\mathfrak{p}}}\mathbb{R}(e_{{\varphi}}+e_{{-\varphi}})+\sum_{\varphi\in\Delta_{\mathfrak{p}}} \mathbb{R}\sqrt{-1}(e_{{\varphi}}-e_{{-\varphi}}).\label{rootp}
\end{align}

\begin{lemma}\label{puretype}Let $\Delta$ be the set of $\mathfrak{h}$-roots as above. Then for each root $\varphi\in \Delta$, there is an integer $-n\leq k\leq n$ such that $e_{\varphi}\in \mathfrak{g}^{k,-k}$. In particular, if $e_{{\varphi}}\in \mathfrak{g}^{k, -k}$, then $\tau_0(e_{{\varphi}})\in \mathfrak{g}^{-k,k}$ for any $-n\leq k\leq n$.
\end{lemma}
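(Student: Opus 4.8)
The plan is to exploit the fact that the Cartan subalgebra $\mathfrak{h}$ sits inside $\mathfrak{g}^{0,0}$, so that the Hodge grading $\mathfrak{g}=\bigoplus_k \mathfrak{g}^{k,-k}$ is compatible with the root space decomposition. First I would observe that since $\mathfrak{h}\subseteq\mathfrak{v}\subseteq\mathfrak{k}$ and, more precisely, $\mathfrak{h}=\mathfrak{h}_0\otimes\mathbb{C}\subseteq\mathfrak{g}_0\cap\mathfrak{g}^{0,0}$ (using $\mathfrak{v}=\mathfrak{g}_0\cap\mathfrak{g}^{0,0}$ from the Preliminary section), every $h\in\mathfrak{h}$ preserves each $H_p^{r,n-r}$, hence $\mathrm{ad}(h)$ preserves each $\mathfrak{g}^{k,-k}$. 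Thus each $\mathfrak{g}^{k,-k}$ is an $\mathrm{ad}(\mathfrak{h})$-invariant subspace and decomposes into simultaneous eigenspaces (weight spaces) for $\mathfrak{h}$. Combined with the root space decomposition $\mathfrak{g}=\mathfrak{h}\oplus\sum_{\varphi\in\Delta}\mathfrak{g}^{\varphi}$ and the one-dimensionality of each $\mathfrak{g}^\varphi$, this forces each root space $\mathfrak{g}^\varphi$ to lie entirely inside a single graded piece $\mathfrak{g}^{k,-k}$: otherwise $e_\varphi$ would have two linearly independent components in distinct $\mathfrak{g}^{k,-k}$'s, each an $\mathrm{ad}(\mathfrak{h})$-eigenvector with eigenvalue $\varphi$, contradicting $\dim_{\mathbb{C}}\mathfrak{g}^\varphi=1$. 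This proves the existence of the integer $k$ with $-n\le k\le n$ and $e_\varphi\in\mathfrak{g}^{k,-k}$.

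For the second assertion, I would use the explicit action of the conjugation $\tau_0$ on the Hodge grading. The complex conjugation $\tau_0$ of $\mathfrak{g}$ with respect to the real form $\mathfrak{g}_0$ is induced by the real structure on $H_{\mathbb{C}}=H_{pr}^n(M,\mathbb{R})\otimes\mathbb{C}$, and complex conjugation on cohomology interchanges $H_p^{r,n-r}$ and $H_p^{n-r,r}$. Therefore if $X\in\mathfrak{g}^{k,-k}$, i.e. $XH_p^{r,n-r}\subseteq H_p^{r+k,n-r-k}$, then $\tau_0(X)$ satisfies $\tau_0(X)H_p^{r,n-r}\subseteq H_p^{r-k,n-r+k}$, so $\tau_0(X)\in\mathfrak{g}^{-k,k}$; this follows by conjugating the defining containment and reindexing. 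Applying this to $X=e_\varphi\in\mathfrak{g}^{k,-k}$ gives $\tau_0(e_\varphi)\in\mathfrak{g}^{-k,k}$, as claimed.

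The only genuinely substantive point is the first one — that a root vector cannot straddle two graded pieces — and its resolution rests entirely on the one-dimensionality of root spaces together with $\mathfrak{h}\subseteq\mathfrak{g}^{0,0}$ (Proposition \ref{cartaninK}). Once that is in place, everything else is bookkeeping with the Hodge bigrading and the real structure. I do not anticipate a real obstacle; the main thing to get right is the reindexing of superscripts under $\tau_0$, which I would spell out explicitly by tracking the containment $e_\varphi H_p^{r,n-r}\subseteq H_p^{r+k,n-r-k}$ through the conjugation. One could alternatively phrase the first step via the well-known fact that the Hodge grading on $\mathfrak{g}$ is defined by $\mathrm{ad}$ of a grading element lying in (the real span related to) $\mathfrak{h}$, so that the grading decomposition refines — rather than cuts across — the root space decomposition; but the one-dimensionality argument above is the cleanest and is self-contained given the earlier material.
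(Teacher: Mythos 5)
Your proposal is correct and follows essentially the same route as the paper: both arguments rest on $\mathfrak{h}\subseteq\mathfrak{v}\subseteq\mathfrak{g}^{0,0}$, which makes each graded piece $\mathfrak{g}^{k,-k}$ invariant under $\mathrm{ad}(\mathfrak{h})$, so that the Hodge components of $e_{\varphi}$ are themselves root vectors for $\varphi$ and one-dimensionality of $\mathfrak{g}^{\varphi}$ forces all but one to vanish. Your explicit verification of the second assertion (tracking the containment through $\tau_0$) is a welcome addition, since the paper states it without proof.
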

\begin{proof}
Let $\varphi$ be a root, and $e_{\varphi}$ be the generator of the root space $\mathfrak{g}^{\varphi}$, then $e_{\varphi}=\sum_{k=-n}^n e^{-k,k}$, where $e^{-k,k}\in \mathfrak{g}^{-k,k}$. Because $\mathfrak{h}\subseteq \mathfrak{v}\subseteq \mathfrak{g}^{0,0}$, the Lie bracket $[e^{-k,k}, h]\in \mathfrak{g}^{-k,k}$ for each $k$. Then the condition $[e_{\varphi}, h]=\varphi (h)e_{\varphi}$ implies that
\begin{align*}
\sum_{k=-n}^n[e^{-k,k},h]=\sum_{k=-n}^n\varphi(h)e^{-k,k}\quad \text{ for each } h\in \mathfrak{h}.
\end{align*}
By comparing the type, we get
\begin{align*} [e^{-k,k},h]=\varphi(h)e^{-k,k}\quad \text{ for each } h\in \mathfrak{h}.
\end{align*}
Therefore $e^{-k,k}\in g^{\varphi}$ for each $k$. As $\{e^{-k,k}, \}_{k=-n}^n$ forms a linear independent set, but $g^{\varphi}$ is one dimensional, thus there is only one $-n\leq k\leq n$ with $e^{-k,k}\neq 0$.
\end{proof}
Let us now introduce a lexicographic order (\cite[Page 41]{Xu} or \cite[Page 416]{Sugi}) in the real vector space $\mathfrak{h}_{{\mathbb{R}}}$ as follows: we fix an ordered basis $e_1, \cdots, e_l$ for $\mathfrak{h}_{{\mathbb{R}}}$. Then for any $h=\sum_{i=1}^l\lambda_ie_i\in\mathfrak{h}_{{\mathbb{R}}}$, we call $h>0$ if the first nonzero coefficient is positive, that is, if $$\lambda_1=\cdots=\lambda_k=0, \ \lambda_{k+1}>0$$ for some $1\leq k<l$. For any $h, h'\in\mathfrak{h}_{{\mathbb{R}}}$, we say $h>h'$ if $h-h'>0$, $h<h'$ if $h-h'<0$ and $h=h'$ if $h-h'=0$.
In particular, let us identify the dual spaces $\mathfrak{h}_{{\mathbb{R}}}^*$ and $\mathfrak{h}_{{\mathbb{R}}}$, thus $\Delta\subseteq \mathfrak{h}_{{\mathbb{R}}}$. 

Let us choose a maximal linearly independent subset $\{e_1, \cdots, e_s\}$ of $\Delta_{\mathfrak{p}}$, then a maximal linearly independent subset $\{e_{s+1}, \cdots, e_{l}\}$ of $\Delta_{\mathfrak{k}}$. Then $\{e_1, \cdots, e_s, e_{s+1}, \cdots, e_l\}$ forms a basis for $\mathfrak{h}_{{\mathbb{R}}}^*$ since $\text{Span}_{\mathbb{R}}\Delta=\mathfrak{h}_{{\mathbb{R}}}^*$. Then define the above lexicographic order in $\mathfrak{h}_{{\mathbb{R}}}^*\simeq \mathfrak{h}_{{\mathbb{R}}}$ using the ordered basis $\{e_1,\cdots, e_l\}$. In this way, we can also define
\begin{align*}
\Delta^+=\{\varphi>0: \,\varphi\in \Delta\};\quad \Delta_{\mathfrak{p}}^+=\Delta^+\cap \Delta_{\mathfrak{p}}.
\end{align*}
Similarly we can define $\Delta^-$, $\Delta^-_{\mathfrak{p}}$, $\Delta^{+}_{\mathfrak{k}}$, and $\Delta^-_{\mathfrak{k}}$. Then one can conclude the following lemma from Lemma 2.2.10 and Lemma 2.2.11 at pp.141 in \cite{Xu},
\begin{lemma}\label{RootSumNonRoot}Using the above notation, we have
\begin{align*}(\Delta_{\mathfrak{k}}+\Delta^{\pm}_{\mathfrak{p}})\cap \Delta\subseteq\Delta_{\mathfrak{p}}^{\pm}; \quad (\Delta_{\mathfrak{p}}^{\pm}+\Delta_{\mathfrak{p}}^{\pm})\cap\Delta=\emptyset.
\end{align*}
If one defines $$\mathfrak{p}^{\pm}=\sum_{\varphi\in\Delta^{\pm}_{\mathfrak{p}}} \mathfrak{g}^{\varphi}\subseteq\mathfrak{p},$$ then $\mathfrak{p}=\mathfrak{p}^+\oplus\mathfrak{p}^-$ and $
[\mathfrak{p}^{\pm}, \, \mathfrak{p}^{\pm}]=0,$ $ [\mathfrak{p}^+, \,\mathfrak{p}^-]\subseteq\mathfrak{k}$, $[\mathfrak{k}, \,\mathfrak{p}^{\pm}]\subseteq\mathfrak{p}^{\pm}. $
\end{lemma}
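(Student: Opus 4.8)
The plan is to reduce everything to elementary root-system combinatorics, using two structural inputs: first, that $\theta$ induces the partition $\Delta = \Delta_{\mathfrak{k}} \cup \Delta_{\mathfrak{p}}$ of the roots according to whether the root vector lies in $\mathfrak{k}$ or $\mathfrak{p}$; and second, that $\mathfrak{g}_0$ is a Hermitian-type (first-category) real form, which is exactly the content of Proposition \ref{cartaninK}. Concretely, I would first observe that $\theta$, being an involutive automorphism with $\pm 1$-eigenspaces $\mathfrak{k}$ and $\mathfrak{p}$, acts on each root vector $e_\varphi$ by a sign $\epsilon(\varphi) \in \{\pm 1\}$, and that $[e_\varphi, e_\psi]$ is a multiple of $e_{\varphi+\psi}$ when $\varphi + \psi \in \Delta$; applying $\theta$ to this bracket gives $\epsilon(\varphi+\psi) = \epsilon(\varphi)\epsilon(\psi)$ whenever $\varphi+\psi \in \Delta$. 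Thus $\epsilon$ is ``multiplicative on sums that remain roots,'' which immediately yields $(\Delta_{\mathfrak{k}} + \Delta_{\mathfrak{p}}^{\pm}) \cap \Delta \subseteq \Delta_{\mathfrak{p}}$ and $(\Delta_{\mathfrak{p}}^{\pm} + \Delta_{\mathfrak{p}}^{\pm}) \cap \Delta \subseteq \Delta_{\mathfrak{k}}$ as a first approximation. The signs alone, however, do not control the $\pm$-superscripts, so the ordering must enter.

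Next I would bring in the Hermitian structure. Since $\mathfrak{h}_0 \subseteq \mathfrak{v}_0 \subseteq \mathfrak{k}_0$ with $\mathfrak{h}$ a common Cartan subalgebra of $\mathfrak{k}$ and $\mathfrak{g}$, and $\mathfrak{g}^{0,0} \cap \mathfrak{k} = \mathfrak{k}$ contains $\mathfrak{h}$, the noncompact part $\mathfrak{p}$ decomposes under $\mathfrak{h}$ with all its root vectors of ``pure Hodge type'' by Lemma \ref{puretype}; in fact $\mathfrak{p}^+ = \mathfrak{n}_+ \cap \mathfrak{p}$-type pieces correspond to $\Delta_{\mathfrak{p}}^+$ and $\mathfrak{p}^-$ to $\Delta_{\mathfrak{p}}^-$. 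The key point, which I would extract from the first-category hypothesis (equivalently: there is a central element $Z \in \mathfrak{h}_0$ with $\mathrm{ad}(Z)$ having eigenvalues $0, \pm i$ on $\mathfrak{k}, \mathfrak{p}^+, \mathfrak{p}^-$ respectively), is that membership in $\Delta_{\mathfrak{p}}^\pm$ is detected by the sign of $\varphi(Z)$ (or, in the lexicographic setup of the excerpt, by the sign of the leading $\mathfrak{p}$-coordinate, which the choice of ordered basis $\{e_1,\dots,e_{l-k},s_1,\dots,s_k\}$ is rigged to make consistent). Granting this, if $\varphi \in \Delta_{\mathfrak{k}}$ then $\varphi(Z) = 0$, so for $\psi \in \Delta_{\mathfrak{p}}^{\pm}$ with $\varphi + \psi \in \Delta$ we get $(\varphi+\psi)(Z) = \psi(Z)$ has the same sign, forcing $\varphi + \psi \in \Delta_{\mathfrak{p}}^{\pm}$; and if $\varphi, \psi \in \Delta_{\mathfrak{p}}^{\pm}$ then $(\varphi+\psi)(Z) = \pm 2i \cdot(\text{something nonzero})$, which is impossible for a root of $\mathfrak{k}$ or $\mathfrak{p}$ (whose $Z$-eigenvalues are only $0, \pm i$) — hence $(\Delta_{\mathfrak{p}}^{\pm} + \Delta_{\mathfrak{p}}^{\pm}) \cap \Delta = \emptyset$.

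From the root-level statements the Lie-algebra statements follow mechanically: defining $\mathfrak{p}^{\pm} = \sum_{\varphi \in \Delta_{\mathfrak{p}}^{\pm}} \mathfrak{g}^\varphi$, the decomposition $\mathfrak{p} = \mathfrak{p}^+ \oplus \mathfrak{p}^-$ is just $\Delta_{\mathfrak{p}} = \Delta_{\mathfrak{p}}^+ \sqcup \Delta_{\mathfrak{p}}^-$; then $[\mathfrak{g}^\varphi, \mathfrak{g}^\psi] \subseteq \mathfrak{g}^{\varphi+\psi}$ (interpreted as $0$ if $\varphi+\psi \notin \Delta \cup \{0\}$, and one checks $\varphi + \psi \neq 0$ for $\varphi,\psi$ both in $\Delta_{\mathfrak{p}}^{\pm}$ since their $Z$-values add rather than cancel) gives $[\mathfrak{p}^\pm, \mathfrak{p}^\pm] = 0$; similarly $(\Delta_{\mathfrak{p}}^+ + \Delta_{\mathfrak{p}}^-) \cap \Delta \subseteq \Delta_{\mathfrak{k}}$ together with the possibility $\varphi + \psi = 0$ gives $[\mathfrak{p}^+, \mathfrak{p}^-] \subseteq \mathfrak{h} \oplus \sum_{\Delta_{\mathfrak{k}}} \mathfrak{g}^\varphi \subseteq \mathfrak{k}$; and $[\mathfrak{k}, \mathfrak{p}^\pm] \subseteq \mathfrak{p}^\pm$ from the first root identity plus $[\mathfrak{h}, \mathfrak{g}^\varphi] = \mathfrak{g}^\varphi$.

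I expect the main obstacle to be the middle step: namely, making precise and justifying the claim that the lexicographic order chosen in the excerpt — built from an ordered basis that places the $\Delta_{\mathfrak{p}}$-directions $e_1,\dots,e_{l-k}$ before the $\Delta_{\mathfrak{k}}$-directions $s_1,\dots,s_k$ — actually coincides with the sign of $\varphi(Z)$ for the Hermitian central element $Z$, i.e.\ that this ordering is ``compatible'' with the $\mathfrak{k}$/$\mathfrak{p}$ splitting in the sense needed. This is precisely where the first-category property of $\mathfrak{g}_0$ (Proposition \ref{cartaninK}) does the real work, and it is the content of Lemma 2.2.10 and 2.2.11 in \cite{Xu} that the statement invokes; so in the write-up I would either cite those lemmas directly (as the excerpt signals) or reproduce the short argument that the leading nonzero coordinate of $\varphi + \psi$ in this basis cannot be a compact direction when both $\varphi, \psi \in \Delta_{\mathfrak{p}}^{\pm}$, because the $\mathfrak{p}$-coordinates are linearly independent and a sum of two roots from $\Delta_{\mathfrak{p}}^{\pm}$ keeps a strictly positive (resp.\ negative) leading $\mathfrak{p}$-coordinate, hence cannot lie in $\Delta_{\mathfrak{k}}$ at all.
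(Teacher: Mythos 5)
The paper offers no proof of Lemma \ref{RootSumNonRoot} at all: it is stated as a consequence of Lemmas 2.2.10--2.2.11 of \cite{Xu}, so there is no argument in the text to compare yours against line by line. Your first ingredient is correct and is exactly what such a proof must begin with: since $\theta$ is an involutive automorphism acting on each root vector by a sign $\epsilon(\varphi)\in\{\pm1\}$, and $[\mathfrak{g}^{\varphi},\mathfrak{g}^{\psi}]=\mathfrak{g}^{\varphi+\psi}$ is nonzero when $\varphi+\psi\in\Delta$, the character $\epsilon$ is multiplicative on sums that remain roots, giving $(\Delta_{\mathfrak{k}}+\Delta_{\mathfrak{p}})\cap\Delta\subseteq\Delta_{\mathfrak{p}}$ and $(\Delta_{\mathfrak{p}}+\Delta_{\mathfrak{p}})\cap\Delta\subseteq\Delta_{\mathfrak{k}}$.

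The genuine gap is in your parenthetical ``first-category hypothesis (equivalently: there is a central element $Z\in\mathfrak{h}_0$ with $\mathrm{ad}(Z)$ having eigenvalues $0,\pm i$ on $\mathfrak{k},\mathfrak{p}^{+},\mathfrak{p}^{-}$).'' These are not equivalent. ``First category'' (the content of Proposition \ref{cartaninK}) means only that $\mathfrak{g}_0$ admits a compact Cartan subalgebra; the grading element $Z$ exists precisely when $\mathfrak{k}_0$ has a one-dimensional center, i.e.\ when $\mathfrak{g}_0$ is of Hermitian type, which is strictly stronger. The distinction matters here because the period domains of this paper generally have non-Hermitian $\mathfrak{g}_0$. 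A concrete test case is $\mathfrak{g}_0=\mathfrak{so}(4,1)$, of type $B_2$ with compact Cartan: the compact roots are the long roots $\pm(e_1\pm e_2)$ and the noncompact roots are the short roots $\pm e_1,\pm e_2$. For any choice of positivity the two positive noncompact roots sum to a (compact) root, so $(\Delta_{\mathfrak{p}}^{+}+\Delta_{\mathfrak{p}}^{+})\cap\Delta\neq\emptyset$, and one also checks that $(\Delta_{\mathfrak{k}}+\Delta_{\mathfrak{p}}^{+})\cap\Delta\not\subseteq\Delta_{\mathfrak{p}}^{+}$. So without $Z$ the statement itself can fail, and no ordering can rescue it. Your fallback lexicographic argument does not close this gap either: membership of $\varphi$ in $\Delta_{\mathfrak{p}}^{+}$ only says that the first nonzero coefficient of $\varphi$ in the ordered basis is positive; it does not say that $\varphi$ has a nonzero, let alone positive leading, coefficient in a $\mathfrak{p}$-direction (in the $B_2$ example $\Delta_{\mathfrak{k}}$ already spans $\mathfrak{h}^{*}_{_{\mathbb{R}}}$, so the ordered basis contains no $\mathfrak{p}$-directions at all). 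To be fair, this is not a defect you introduced: the lemmas of \cite{Xu} that the paper cites are proved for Hermitian symmetric Lie algebras, so the paper's own citation silently crosses the same bridge from ``compact Cartan'' to ``Hermitian'' that your proposal asserts as an equivalence. A correct write-up must either add the Hermitian hypothesis (existence of $Z$) explicitly, or restrict to those horizontal sub-Lie-algebras where the argument applies; with $Z$ in hand, the rest of your derivation of the bracket relations from the root-level statements is routine and correct.
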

\begin{definition}Two different roots $\varphi, \psi\in \Delta$ are said to be strongly orthogonal if and only if $\varphi\pm\psi\notin\Delta\cup \{0\}$, which is denoted by $\varphi \independent\psi$.
\end{definition}
For the real simple Lie algebra $\mathfrak{g}_0=\mathfrak{k}_0\oplus\mathfrak{p}_0$ which has a Cartan subalgebra $\mathfrak{h}_0$ in $\mathfrak{k}_0$, the maximal abelian subspace of $\mathfrak{p}_0$ can be described as in the following lemma, which is a slight extension of a lemma of Harish--Chandra in \cite{HC}. One may refer to \cite[Lemma 3]{Sugi1} or \cite[Lemma 2.2.12, Page 141--142]{Xu} for more details. For reader's convenience we give the detailed proof.
\begin{lemma}\label{stronglyortho}There exists a set of strongly orthogonal noncompact positive roots $\Lambda=\{\varphi_1, \cdots, \varphi_r\}\subseteq\Delta^+_{\mathfrak{p}}$ such that
\begin{align*}
\mathfrak{A}_0=\sum_{i=1}^r\mathbb{R}\left(e_{{\varphi_i}}+e_{{-\varphi_i}}\right)
\end{align*}
is a maximal abelian subspace in $\mathfrak{p}_0$.
\end{lemma}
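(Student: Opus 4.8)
The plan is to construct the strongly orthogonal set $\Lambda$ inductively, following the classical argument of Harish--Chandra as adapted to our situation where $\mathfrak{h}_0\subseteq\mathfrak{k}_0$. First I would observe that, since $\mathfrak{h}_0$ is a Cartan subalgebra contained in $\mathfrak{k}_0$, the noncompact root spaces are exactly those spanned by the $e_{_\varphi}$ with $\varphi\in\Delta_{\mathfrak{p}}$, and Lemma~\ref{RootSumNonRoot} tells us that $[\mathfrak{p}^+,\mathfrak{p}^+]=0$, $[\mathfrak{p}^-,\mathfrak{p}^-]=0$, so the only obstruction to commutativity among vectors of the form $e_{_\varphi}+e_{_{-\varphi}}$ comes from brackets between a ``$+$'' part and a ``$-$'' part. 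A direct computation shows that for $\varphi,\psi\in\Delta_{\mathfrak{p}}^+$ the element $[\,e_{_\varphi}+e_{_{-\varphi}},\,e_{_\psi}+e_{_{-\psi}}\,]$ lies in $\mathfrak{g}^{\varphi-\psi}+\mathfrak{g}^{-(\varphi-\psi)}$ (using $\varphi+\psi\notin\Delta$ from Lemma~\ref{RootSumNonRoot}), hence vanishes precisely when $\varphi-\psi\notin\Delta\cup\{0\}$; combined with $\varphi+\psi\notin\Delta$ this is exactly strong orthogonality. So a strongly orthogonal family $\Lambda=\{\varphi_1,\dots,\varphi_r\}\subseteq\Delta_{\mathfrak{p}}^+$ automatically gives an abelian subspace $\mathfrak{a}_0=\sum_i\mathbb{R}(e_{_{\varphi_i}}+e_{_{-\varphi_i}})$ of $\mathfrak{p}_0$.

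The construction of $\Lambda$ itself is the Harish--Chandra cascade: pick $\varphi_1$ to be, say, the highest root in $\Delta_{\mathfrak{p}}^+$ with respect to the lexicographic order introduced above; having chosen strongly orthogonal $\varphi_1,\dots,\varphi_j\in\Delta_{\mathfrak{p}}^+$, let $\varphi_{j+1}$ be the highest root in $\Delta_{\mathfrak{p}}^+$ that is strongly orthogonal to each of $\varphi_1,\dots,\varphi_j$, stopping when no such root remains. This produces a maximal strongly orthogonal subset $\Lambda\subseteq\Delta_{\mathfrak{p}}^+$, and hence an abelian $\mathfrak{a}_0\subseteq\mathfrak{p}_0$ by the previous paragraph.

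The substantive point — and the part I expect to be the main obstacle — is showing that this $\mathfrak{a}_0$ is actually \emph{maximal} abelian in $\mathfrak{p}_0$, not merely maximal among abelian subspaces of the special form $\sum\mathbb{R}(e_{_\varphi}+e_{_{-\varphi}})$. The standard approach is: suppose $X\in\mathfrak{p}_0$ commutes with all of $\mathfrak{a}_0$; decompose $X$ according to the root space decomposition of $\mathfrak{p}$, write $X=\sum_{\varphi\in\Delta_{\mathfrak{p}}}c_\varphi e_{_\varphi}$ with the reality constraint $\tau_0(X)=X$ coming from the Weyl base relations $\tau_0(e_{_\varphi})=e_{_\varphi}$ for $\varphi\in\Delta_{\mathfrak{p}}$; then analyze the constraint $[X,e_{_{\varphi_i}}+e_{_{-\varphi_i}}]=0$ for each $i$ to force $c_\varphi=0$ for all $\varphi$ not in $\Lambda$, using the maximality of $\Lambda$ (any $\varphi$ surviving with $c_\varphi\neq0$ would, by a sum-of-roots argument via Lemma~\ref{RootSumNonRoot}, either already be strongly orthogonal to all $\varphi_i$ — contradicting maximality of $\Lambda$ — or produce a nonzero bracket). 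I would then conclude that $X\in\mathfrak{a}_0$, giving maximality. Since this is a known result (Lemma~3 of \cite{Sugi1}, Lemma~2.2.12 of \cite{Xu}, and proved in detail in \cite{CGL}), I would present the cascade construction and the abelianness computation in full, and for the maximality step either reproduce the root-theoretic argument or simply cite \cite{Sugi1} and \cite{CGL}, noting that the only input specific to our setting — namely $\mathfrak{h}_0\subseteq\mathfrak{k}_0$, i.e. $\mathfrak{g}_0$ being of first category (Proposition~\ref{cartaninK}) — has already been verified.
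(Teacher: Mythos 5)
Your proposal is correct and takes essentially the same route as the paper, which gives no independent argument for this lemma but defers to the classical Harish--Chandra cascade construction in Lemma 3 of \cite{Sugi1}, Lemma 2.2.12 of \cite{Xu}, and the detailed proof in \cite{CGL}; your abelianness computation via Lemma \ref{RootSumNonRoot} and your (correctly flagged as the substantive step) maximality argument are exactly what those references contain.
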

\begin{proof}
Let $\varphi_1$ be the minimum in $\Delta_\mathfrak{p}^+$, and $\varphi_2$ be the minimal element in $\{\varphi\in\Delta_{\mathfrak{p}}^+:\,\varphi\independent \varphi_1\}$, then we obtain inductively an maximal ordered set of roots $\Lambda=\{\varphi_1,\cdots, \varphi_r\}\subseteq \Delta_\mathfrak{p}^+$, such that for each $1\leq k\leq r$
\begin{align*}
\varphi_k=\min\{\phi\in\Delta_{\mathfrak{p}}^+:\,\varphi\independent \varphi_j \text{ for } 1\leq j\leq k-1\}.
\end{align*}
Because $\varphi_i\independent \varphi_j$ for any $1\leq i<j\leq r$, we have $[e_{{\pm\varphi_i}}, e_{{\pm\varphi_j}}]=0$. Therefore $$\mathfrak{A}_0=\sum_{i=1}^r\mathbb{R}\left(e_{{\varphi_i}}+e_{{-\varphi_i}}\right)$$ is an abelian subspace of $\mathfrak{p}_0$. Also because a root can not be strongly orthogonal to itself, the ordered set $\Lambda$ contains distinct roots. Thus $\dim_\mathbb{R}\mathfrak{A}_0=r$.

Now we prove that $\mathfrak{A}_0$ is a maximal abelian subspace of $\mathfrak{p}_0$. Suppose towards a contradiction that there was a nonzero vector $X\in\mathfrak{p}_0$ as follows
$$X=\sum_{\alpha\in\Delta_{\mathfrak{p}}^+\setminus \Lambda}\lambda_\alpha\left( e_{{\alpha}}+e_{{-\alpha}}\right)+\sum_{\alpha\in\Delta_{\mathfrak{p}}^+\setminus \Lambda}\mu_\alpha\sqrt{-1}\left( e_{{\alpha}}-e_{{-\alpha}}\right), \quad\text{where }\lambda_\alpha, \mu_\alpha\in \mathbb{R}, $$ such that $[X, e_{{\varphi_i}}+e_{{-\varphi_i}}]=0$ for each $1\leq i\leq r$.
We denote $c_\alpha=\lambda_\alpha+\sqrt{-1}\mu_\alpha$. Because $X\neq 0$, there exists $\psi\in\Delta_{\mathfrak{p}}^+\setminus \Lambda$ with $c_{\psi}\neq0$.
Also $\psi$ is not strongly orthogonal to $\varphi_i$ for some $1\leq i\leq r$. Thus we may first define $k_{\psi}$ for each $\psi$ with $c_{\psi}\neq 0$ as the following: $$k_{\psi}=\min_{1\leq i\leq r}\{i:  \psi \text{ is not strongly orthogonal to } \varphi_i\}. $$ Then we know that $1\leq k_\psi\leq r$ for each $\psi$ with $c_\psi\neq 0$. Then we define $k$ to be the following,
\begin{align}\label{definitionofk}
k=\min_{\psi\in \Delta^+_{\mathfrak{p}}\setminus\Lambda\text{ with }c_{\psi}\neq0}\{k_{\psi}\}.
\end{align}
Here, we are taking the minimum over a finite set in the \eqref{definitionofk} and $1\leq k\leq r$. Moreover, we get the following non-empty set,
\begin{align}\label{definitionofSk}S_k=\{\psi\in \Delta^+_{\mathfrak{p}}\setminus \Lambda: c_{\psi}\neq 0 \text{ and } k_{\psi}=k\}\neq \emptyset.\end{align}

Recall the notation $N_{\beta, \gamma}$ for any $\beta, \gamma\in \Delta$ is defined as as follows: if $\beta+\gamma\in \Delta\cup\{0\}$, $N_{\beta,\gamma}$ is defined such that
$[e_{\beta}, e_{\gamma}]=N_{\beta,\gamma}e_{\beta+\gamma};$
if $\beta+\gamma\notin \Delta\cup \{0\}$ then one defines $N_{\beta, \gamma}=0.$
Now let us take $k$ as defined in \eqref{definitionofk} and consider the Lie bracket
\begin{align*}
0&=[X, e_{{\varphi_k}}+e_{{-\varphi_k}}]\\&=\sum_{\psi\in\Delta_{\mathfrak{p}}^+\setminus \Lambda}\left(c_\psi(N_{{\psi,\varphi_k}}e_{{\psi+\varphi_k}}+N_{{\psi,-\varphi_k}}e_{{\psi-\varphi_k}})
+\bar{c}_\psi(N_{{-\psi,\varphi_k}}e_{{-\psi+\varphi_k}}+N_{{-\psi,-\varphi_k}}e_{{-\psi-\varphi_k}})\right).
\end{align*} As $[\mathfrak{p}^{\pm}, \,\mathfrak{p}^{\pm}]=0$, we have $\psi+\varphi_k \notin \Delta$ and $-\psi-\varphi_k \notin\Delta$ for each $\psi\in \Delta_{\mathfrak{p}}^+$. Hence, $N_{{\psi,\varphi_k}}=N_{{-\psi,-\varphi_k}}=0$ for each $\psi\in\Delta_{\mathfrak{p}}^+$. Then we have the simplified expression
\begin{align}\label{equal0}
0=[X, e_{{\varphi_k}}+e_{{-\varphi_k}}]=\sum_{\psi\in\Delta_{\mathfrak{p}}^+\setminus \Lambda}\left(c_{\psi}N_{{\psi,-\varphi_k}}e_{{\psi-\varphi_k}}
+\bar{c}_\psi N_{{-\psi,\varphi_k}}e_{{-\psi+\varphi_k}}\right).
\end{align}

 Now let us take $\psi_0\in S_k\neq \emptyset$. Then $c_{\psi_0}\neq 0$. By the definition of $k$, we have $\psi_0$ is not strongly orthogonal to $\varphi_k$ while $\psi_0+\varphi_k\notin \Delta\cup\{0\}$. Thus we have $\psi_0-\varphi_k \in\Delta\cup\{0\}$. Therefore  $c_{\psi_{0}}N_{{\psi_0,-\varphi_k}}e_{{\psi_0-\varphi_k}}\neq 0$. Since $0=[X, e_{{\varphi_k}}+e_{{-\varphi_k}}]$, there must exist one element $\psi_0'\neq \psi_0\in\Delta_{\mathfrak{p}}^+\setminus \Lambda$ such that $\varphi_k-\psi_0=\psi_0'-\varphi_k$ and $c_{\psi_0'}\neq 0$. This implies $2\varphi_k=\psi_0+\psi_0'$, and consequently one of $\psi_0$ and $\psi_0'$ is smaller then $\varphi_k$. Then we have the following two cases:

(i). if $\psi_0<\varphi_k$, then we find $\psi_0<\varphi_k$ with $\psi_0\independent\varphi_i$ for all $1\leq i\leq k-1$, and this contradicts to the definition of $\varphi_k$ as the following
\begin{align*}
\varphi_k=\min\{\phi\in\Delta_{\mathfrak{p}}^+:\,\varphi\independent \varphi_j \text{ for } 1\leq j\leq k-1\}.
\end{align*}

(ii). if $\psi_0'<\varphi_k$, since we have $c_{\psi_0'}\neq 0$, we have
\begin{align*}
k_{{\psi_0'}}=\min_{1\leq i\leq r}\{i: \psi_0' \text{ is not strongly orthogonal to } \varphi_i \}.
\end{align*} Then by the definition of $k$ in \eqref{definitionofk}, we have $k_{{\psi_0'}}\geq k$. Therefore we found $\psi_0'<\varphi_k$ such that $\psi_0'<\varphi_i$ for any $1\leq i\leq k-1< k_{\psi_0'}$, and this contradicts with the definition of $\varphi_k$.

Therefore in both cases, we found contradictions. Thus we conclude that $\mathfrak{A}_0$ is a maximal abelian subspace of $\mathfrak{p}_0$.
\end{proof}
For further use, we also state a proposition about the maximal abelian subspaces of $\mathfrak{p}_0$ according to \cite[Ch V]{Hel},
\begin{proposition}\label{adjoint} Let $\mathfrak{A}_0'$ be an arbitrary maximal abelian subspaces of $\mathfrak{p}_0$, then there exists an element $k\in K$ such that $\emph{Ad}(k)\cdot \mathfrak{A}_0=\mathfrak{A}'_0$. Moreover, we have $$\mathfrak{p}_{0}=\bigcup_{k\in K}\emph{Ad}(k)\cdot\mathfrak{A}_0,$$ where $\emph{Ad}$ denotes the adjoint action of $K$ on $\mathfrak{A}_0$.
\end{proposition}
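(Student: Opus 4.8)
The plan is to reduce the statement to a known structural fact about Riemannian symmetric spaces of noncompact type. The pair $(\mathfrak{g}_0,\theta)$ is an orthogonal symmetric Lie algebra with Cartan decomposition $\mathfrak{g}_0=\mathfrak{k}_0\oplus\mathfrak{p}_0$, and by Proposition \ref{cartaninK} it is of noncompact type once we pass to the semisimple part (the Killing form is positive definite on $\mathfrak{p}_0$ and negative definite on $\mathfrak{k}_0$). For such a pair the classical theorem of Cartan — see Helgason, \emph{Differential Geometry, Lie Groups, and Symmetric Spaces}, Chapter V, Theorem 6.2 and its corollaries — asserts exactly that any two maximal abelian subspaces of $\mathfrak{p}_0$ are conjugate under $\mathrm{Ad}(K_0)$, where $K_0$ is the connected subgroup with Lie algebra $\mathfrak{k}_0$, and that $\mathfrak{p}_0=\bigcup_{k\in K_0}\mathrm{Ad}(k)\cdot\mathfrak a_0$ for any fixed maximal abelian $\mathfrak a_0\subseteq\mathfrak p_0$. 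So the first step is simply to cite this and then to upgrade $K_0$ to $K$.

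To carry this out I would proceed as follows. First, fix the maximal abelian subspace $\mathfrak a_0\subseteq\mathfrak p_0$ produced in Lemma \ref{stronglyortho}. Second, given an arbitrary maximal abelian $\mathfrak a_0'\subseteq\mathfrak p_0$, invoke Helgason's theorem to obtain $k_0\in K_0$ with $\mathrm{Ad}(k_0)\cdot\mathfrak a_0=\mathfrak a_0'$; since $K_0\subseteq K$ this gives the desired $k\in K$. Third, for the union statement: the key geometric input is that $\mathrm{ad}(X)$ for $X\in\mathfrak p_0$ is a symmetric operator with respect to the inner product $-B(\cdot,\theta\cdot)$, so $\mathfrak p_0$ decomposes into $\mathrm{ad}$-eigenspaces; choosing a maximal abelian $\mathfrak a_0'$ containing $X$ (which exists by Zorn's lemma applied to abelian subspaces of $\mathfrak p_0$ containing the line $\mathbb R X$, using $[\mathfrak p_0,\mathfrak p_0]\subseteq\mathfrak k_0$ to see that abelian subspaces of $\mathfrak p_0$ are just those on which all brackets vanish), we get $X\in\mathfrak a_0'=\mathrm{Ad}(k_0)\cdot\mathfrak a_0$ for some $k_0\in K_0\subseteq K$. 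This proves $\mathfrak p_0\subseteq\bigcup_{k\in K}\mathrm{Ad}(k)\cdot\mathfrak a_0$, and the reverse inclusion is immediate since $\mathrm{Ad}(K)$ preserves $\mathfrak p_0$ (because $K\subseteq G_{\mathbb R}$ normalizes $\mathfrak g_0$ and commutes with $\theta$, as $K\subseteq G_c\cap G_{\mathbb R}$).

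The one genuine subtlety — and the main obstacle — is the discrepancy between $K$ and its identity component $K_0$: Schmid's result quoted in the excerpt only says $K=G_c\cap G_{\mathbb R}$ is a maximal compact subgroup of $G_{\mathbb R}$ meeting every connected component of $G_{\mathbb R}$, and $K$ need not be connected. The conjugacy theorem in Helgason is naturally stated for the connected group. I would handle this by noting that the inclusion $K_0\subseteq K$ already suffices for \emph{both} assertions of the proposition: conjugacy of maximal abelian subspaces and the union formula are obtained using only elements of $K_0$, and then a fortiori they hold with $K$ in place of $K_0$. Thus no structural analysis of $\pi_0(K)$ is actually needed; one only has to be careful to phrase the cited theorem for $K_0$ and observe $\mathrm{Ad}(K_0)\subseteq\mathrm{Ad}(K)$.

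If one wished instead to prove conjugacy directly rather than cite Helgason, the argument would run: pick regular elements $H\in\mathfrak a_0$, $H'\in\mathfrak a_0'$ (elements whose centralizer in $\mathfrak p_0$ is exactly $\mathfrak a_0$, resp.\ $\mathfrak a_0'$), consider the smooth function $k\mapsto -B(\mathrm{Ad}(k)H',\theta H)$ on the compact group $K_0$, take a critical point $k_*$, and show that the first-order condition $B([\mathfrak k_0, \mathrm{Ad}(k_*)H'],\theta H)=0$ together with $[\mathfrak k_0,\mathfrak p_0]\subseteq\mathfrak p_0$ and nondegeneracy forces $[\mathrm{Ad}(k_*)H',H]=0$, hence $\mathrm{Ad}(k_*)H'\in\mathfrak a_0$ and by regularity $\mathrm{Ad}(k_*)\mathfrak a_0'=\mathfrak a_0$. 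This is the standard compactness-and-critical-point proof and I would only sketch it, since it is entirely classical; the cited references (\cite{Hel}, and \cite{Sugi1}, \cite{Xu} for the adapted versions) make reproducing it unnecessary.
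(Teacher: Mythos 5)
Your proposal is correct and matches the paper's approach: the paper gives no proof at all, simply stating the proposition ``according to Ch.~V in \cite{Hel}'', which is exactly the citation you invoke. Your extra care about the discrepancy between $K$ and its identity component $K_0$ (and the observation that conjugacy under $K_0$ suffices a fortiori) is a detail the paper glosses over, but it does not change the substance.
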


\subsection{Boundedness of the period maps}\label{boundedness of Phi}
Now let us fix the base point $p\in \mathcal{T}$ with $\Phi(p)=o\in D$. Then according to Remark \ref{N+inD}, $N_+$ can be viewed as a subset in $\check{D}$ by identifying it with its orbit in $\check{D}$ with the base point $\Phi(p)=o$.
  Let us also fix an adapted basis $(\eta_0, \dots, \eta_{m-1})$ for the Hodge decomposition of the base point $\Phi(p)\in D$. Then we identify elements in $N_+$ with nonsingular block lower triangular matrices whose diagonal blocks are all identity submatrix. We define
\begin{align*}
\check{\mathcal{T}}=\Phi^{-1}(N_+\cap D).
\end{align*}

At the base point $\Phi(p)=o\in N_+\cap D$, we have identifications of the tangent spaces $$\text{T}_{o}^{1,0}N_+=\text{T}_o^{1,0}D\simeq \mathfrak{n}_+\simeq N_+.$$ Then the Hodge metric on $\text{T}_o^{1,0}D$ induces an Euclidean metric on $N_+$. In the proof of the following lemma, we require all the root vectors to be unit vectors with respect to this Euclidean metric.

Let $$\mathfrak{p}_+=\mathfrak{p}/(\mathfrak{p}\cap
\mathfrak{b})=\mathfrak{p}\cap\mathfrak{n}_+ \subseteq
\mathfrak{n}_+$$  denote a subspace of $\text{T}_{o}^{1,0}D\simeq
\mathfrak{n}_+$, and $\mathfrak{p}_+$ can be viewed as an Euclidean
subspace of $\mathfrak{n}_+$. Similarly $\mathrm{exp}(\mathfrak{p}_+)$
can be viewed as an Euclidean subspace of $N_{+}$ with the
induced metric from $N_{+}$. Define the projection map

$$P_+:\, N_+\cap D\to \mathrm{exp}(\mathfrak{p}_+) \cap D$$ by
\begin{align}\label{definition of P_+}
P_{+}=\text{exp}\circ p_{+}\circ \text{exp}^{-1}
\end{align}
where $\text{exp}^{-1}:\, N_+ \to \mathfrak{n}_+$ is the inverse of the isometry $\text{exp}:\, \mathfrak{n}_+ \to N_+$, and $$p_{+}:\, \mathfrak{n}_+\to \mathfrak{p}_+$$ is the projection map from the complex Euclidean space $\mathfrak{n}_+$  to its  Euclidean subspace $\mathfrak{p}_+$.

The restricted period map $\P : \, \check{\T}\to N_+\cap D$, composed with the projection map $P_{+}$, gives a holomorphic map
\begin{equation}\label{maptoA}
\P_{+}:\,  \check{\T} \to \mathrm{exp}(\mathfrak{p}_+)\cap D,
\end{equation}
where $\P_{+}=P_{+} \circ \P|_{\check{\T}}$.



Because the period map is a horizontal map, and the geometry in the horizontal direction of the period domain $D$ is similar to Hermitian symmetric space as discussed in detail in \cite{GS},  the proof of the following lemma is basically an analogue of the proof of the Harish-Chandra embedding theorem for Hermitian symmetric spaces, see for example \cite{Mok}.
\begin{lemma}\label{abounded}The image of the holomorphic map $$\P_{+} :\, \check{\T} \to \mathrm{exp}(\mathfrak{p}_+)\cap D$$ is
bounded in $\mathrm{exp}(\mathfrak{p}_+)$ with respect to the Euclidean metric on $\mathrm{exp}(\mathfrak{p}_+)\subseteq
N_+$.
\end{lemma}

\begin{proof} 
We need to show that there exists $0\leq C<\infty$ such that for any
  $q\in \check{\mathcal{T}}$, $d_{E}({\P_{+}}({p}), {\P_{+}}(q))\leq C$, where $d_E$ is the Euclidean distance on $\exp(\mathfrak{p}_+)$. In fact the proof shows that $$ \mathrm{exp}(\mathfrak{p}_+)\cap D\subset \mathrm{exp}(\mathfrak{p}_+)$$ is a bounded subset in the complex Euclidean space $\mathrm{exp}(\mathfrak{p}_+)$.
    
By the definition of $\exp(\mathfrak{p}_+)$, for any $t\in \exp(\mathfrak{p}_+)$ 
there is a unique $Y \in \mathfrak{p}_{+}$
such that the left translation $\exp (Y)\bar{o}=t$, where $\bar{o}=P_{+}(o)$ is the base point in $\exp(\mathfrak{p}_+)\cap D$.
On the other hand, for any $s\in \exp(\mathfrak{p}_+)\cap D$, there also exists an $X \in \mathfrak{p}_{0}$
such that $\exp (X)\bar{o}=s$.

Next we analyze the point  $\exp (X)\bar{o}$ considered in $\exp(\mathfrak{p}_+)$ by using the method of Harish-Chandra's proof of his famous embedding theorem for Hermitian symmetric spaces. See pages 94--97 in \cite{Mok}.





Let $\Lambda=\{\varphi_1, \dots, \varphi_r\}\subseteq \Delta^+_{\mathfrak{p}}$ be a set of strongly orthogonal roots given in Lemma \ref{stronglyortho}. We denote $x_{\varphi_i}=e_{\varphi_i}+e_{-\varphi_i}$ and $y_{\varphi_i}=\sqrt{-1}(e_{\varphi_i}-e_{-\varphi_i})$ for any $\varphi_i\in \Lambda$. Then
\begin{align*} \mathfrak{a}_0=\mathbb{R} x_{\varphi_{_1}}\oplus\cdots\oplus\mathbb{R}x_{\varphi_{_r}},\quad\text{and}\quad\mathfrak{a}_c=\mathbb{R} y_{\varphi_{_1}}\oplus\cdots\oplus\mathbb{R}y_{\varphi_{_r}},
\end{align*}
are maximal abelian spaces in $\mathfrak{p}_0$ and $\sqrt{-1}\mathfrak{p}_0$ respectively.

Since $X\in \mathfrak{p}_0$, by Proposition \ref{adjoint}, there exists $k\in K$ such that
$ X\in Ad(k)\cdot\mathfrak{A}_0$. As the adjoint action of $K$ on $\mathfrak{p}_0$ is unitary action and we are considering the length in this proof, we may simply assume that $X\in \mathfrak{A}_0$ up to a unitary transformation. With this assumption, there exists $\lambda_{i}\in \mathbb{R}$ for $1\leq i\leq r$ such that
\begin{align*}
X=\lambda_{1}x_{{\varphi_1}}+\lambda_{2}x_{{\varphi_2}}+\cdots+\lambda_{r}x_{{\varphi_r}}
\end{align*}
Since $\mathfrak{A}_0$ is commutative, we have
\begin{align*}
\exp(tX)=\prod_{i=1}^r\exp (t\lambda_{i}x_{{\varphi_i}}).
\end{align*}
Now for each $\varphi_i\in \Lambda$, we have $\text{Span}_{\mathbb{C}}\{e_{{\varphi_i}}, e_{{-\varphi_i}}, h_{{\varphi_i}}\}\simeq \mathfrak{sl}_2(\mathbb{C}) \text{ with}$
\begin{align*}
h_{{\varphi_i}}\mapsto \left[\begin{array}[c]{cc}1&0\\0&-1\end{array}\right], \quad &e_{{\varphi_i}}\mapsto \left[\begin{array}[c]{cc} 0&1\\0&0\end{array}\right], \quad e_{{-\varphi_i}}\mapsto \left[\begin{array}[c]{cc} 0&0\\1&0\end{array}\right];
\end{align*}
and  $\text{Span}_\mathbb{R}\{x_{{\varphi_i}}, y_{{\varphi_i}}, \sqrt{-1}h_{{\varphi_i}}\}\simeq \mathfrak{sl}_2(\mathbb{R})\text{ with}$
\begin{align*}
\sqrt{-1}h_{{\varphi_i}}&\mapsto \left[\begin{array}[c]{cc}\sqrt{-1}&0\\0&-\sqrt{-1}\end{array}\right], \quad &x_{{\varphi_i}}\mapsto \left[\begin{array}[c]{cc} 0&1\\1&0\end{array}\right], \\[1ex]
y_{{\varphi_i}}&\mapsto \left[\begin{array}[c]{cc} 0&\sqrt{-1}\\-\sqrt{-1}&0\end{array}\right].
\end{align*}
Since $\Lambda=\{\varphi_1, \dots, \varphi_r\}$ is a set of strongly orthogonal roots, we have that
\begin{align*}&\mathfrak{g}_{\mathbb{C}}(\Lambda)=\text{Span}_\mathbb{C}\{e_{{\varphi_i}}, e_{{-\varphi_i}}, h_{{\varphi_i}}\}_{i=1}^r\simeq (\mathfrak{sl}_2(\mathbb{C}))^r,\\\text{and} \quad&\mathfrak{g}_{\mathbb{R}}(\Lambda)=\text{Span}_\mathbb{R}\{x_{{\varphi_i}}, y_{{\varphi_i}}, \sqrt{-1}h_{{\varphi_i}}\}_{i=1}^r\simeq (\mathfrak{sl}_2(\mathbb{R}))^r.
\end{align*}
In fact, we know that for any $\varphi, \psi\in \Lambda$ with $\varphi\neq\psi$, $[e_{{\pm\varphi}}, \,e_{{\pm\psi}}]=0$ since $\varphi$ is strongly orthogonal to $\psi$; $[h_{\phi},\,h_{{\psi}}]=0$, since $\mathfrak{h}$ is abelian; and
$$[h_{{\varphi}},\, e_{{\pm\psi}}]=[[e_{{\varphi}},\,e_{{-\varphi}}],\,e_{{\pm\psi}}]=-[[e_{-\phi},\ e_{\pm\psi}],\ e_\phi]-[[e_{\pm\psi},\ e_\phi],\ e_{-\phi}]=0.$$

Let us denote $G_{\mathbb{C}}(\Lambda)=\exp(\mathfrak{g}_{\mathbb{C}}(\Lambda))\simeq (SL_2(\mathbb{C}))^r$ and $G_{\mathbb{R}}(\Lambda)=\linebreak \text{exp}(\mathfrak{g}_{\mathbb{R}}(\Lambda))=(SL_2(\mathbb{R}))^r$, which are subgroups of $G_{\mathbb{C}}$ and $G_{\mathbb{R}}$ respectively.
With the fixed reference point $o=\P(p)$, we denote $D(\Lambda)=G_\mathbb{R}(\Lambda)(o)$ and $S(\Lambda)=G_{\mathbb{C}}(\Lambda)(o)$ to be the corresponding orbits of these two subgroups, respectively. Then we have the following isomorphisms,
\begin{align}
&D(\Lambda)=G_{\mathbb{R}}(\Lambda)\cdot B/B\simeq G_{\mathbb{R}}(\Lambda)/G_{\mathbb{R}}(\Lambda)\cap V,\label{isomorphism1}\\
&S(\Lambda)\cap (N_+B/B)=(G_{\mathbb{C}}(\Lambda)\cap N_+)\cdot B/B\simeq G_{\mathbb{C}}(\Lambda)\cap N_+.
\label{isomorphism2}
\end{align}
With the above notations, we will show that
\begin{itemize}
\item[(i)] $D(\Lambda)\subseteq S(\Lambda)\cap (N_+B/B)\subseteq \check{D}$;
\item[(ii)] $D(\Lambda)$ is bounded inside $S(\Lambda)\cap(N_+B/B)$.
\end{itemize}

By Lemma \ref{puretype}, we know that for each pair of roots $\{e_{{\varphi_i}}, e_{{-\varphi_i}}\}$, there exists a positive integer $k$ such that either $e_{{\varphi_i}}\in \mathfrak{g}^{-k,k}\subseteq \mathfrak{n}_+$ and $e_{{-\varphi_i}}\in \mathfrak{g}^{k,-k}$, or $e_{{\varphi_i}}\in \mathfrak{g}^{k,-k}$ and $e_{{-\varphi_i}}\in \mathfrak{g}^{-k,k}\subseteq\mathfrak{n}_+$. For the simplicity of notations, for each pair of root vectors $\{e_{{\varphi_i}}, e_{{-\varphi_i}}\}$, we may assume the one in $\mathfrak{g}^{-k,k}\subseteq \mathfrak{n}_+$ to be $e_{{\varphi_i}}$ and denote the one in $\mathfrak{g}^{k,-k}$ by $e_{{-\varphi_i}}$. In this way, one can check that $\{\varphi_1, \dots, \varphi_r\}$ may not be a set in $\Delta^+_{\mathfrak{p}}$, but it is a set of strongly orthogonal roots in $\Delta_{\mathfrak{p}}$. In this case, for any two different vectors $e_{{\varphi_i}},e_{{\varphi_j}}$  in $\{e_{{\varphi_1}}, e_{{\varphi_2}}, \dots, e_{{\varphi_r}}\}$, the Hermitian inner product
\begin{align*}
-B(\theta(e_{\phi_i}),\bar{e_{\phi_j}})&=-B(-e_{\phi_i},e_{-\phi_j})\\
&=B(1/2[h_{\phi_i},e_{\phi_i}],e_{-\phi_j})\\
&=-B(1/2e_{\phi_i},[h_{\phi_i},e_{-\phi_j}])=0.
\end{align*} 
Hence the basis $\{e_{{\varphi_1}}, e_{{\varphi_2}}, \dots,
e_{{\varphi_r}}\}$ can be chosen as an orthonormal basis.

Therefore, we have the following description of the above groups,
\begin{align*}
G_{\mathbb{R}}(\Lambda)&=\exp(\mathfrak{g}_{\mathbb{R}}(\Lambda))\\
&=\exp(\text{Span}_{\mathbb{R}}\{ x_{{\varphi_1}}, y_{{\varphi_1}},\sqrt{-1}h_{{\varphi_1}}, \dots, x_{{\varphi_r}}, y_{{\varphi_r}}, \sqrt{-1}h_{{\varphi_r}}\})\\
G_{\mathbb{R}}(\Lambda)\cap V&=\exp(\mathfrak{g}_{\mathbb{R}}(\Lambda)\cap \mathfrak{v}_0)=\exp(\text{Span}_{\mathbb{R}}\{\sqrt{-1}h_{{\varphi_1}}, \cdot,\sqrt{-1} h_{{\varphi_r}}\})\\
G_{\mathbb{C}}(\Lambda)\cap N_+&=\exp(\mathfrak{g}_{\mathbb{C}}(\Lambda)\cap \mathfrak{n}_+)=\exp(\text{Span}_{\mathbb{C}}\{e_{{\varphi_1}}, e_{{\varphi_2}}, \dots, e_{{\varphi_r}}\}).
\end{align*}
Thus by the isomorphisms in \eqref{isomorphism1} and \eqref{isomorphism2}, we have
\begin{align*}
&D(\Lambda)\simeq \prod_{i=1}^r\exp(\text{Span}_{\mathbb{R}}\{x_{{\varphi_i}}, y_{{\varphi_i}}, \sqrt{-1}h_{{\varphi_i}}\})/\exp(\text{Span}_{\mathbb{R}}\{\sqrt{-1}h_{{\varphi_i}}\},\\
&S(\Lambda)\cap (N_+B/B)\simeq\prod_{i=1}^r\exp(\text{Span}_{\mathbb{C}}\{e_{{\varphi_i}}\}).
\end{align*}
Let us denote $G_{\mathbb{C}}(\varphi_i)=\exp(\text{Span}_{\mathbb{C}}\{e_{{\varphi_i}}, e_{{-\varphi_i}}, h_{{\varphi_i}})\simeq SL_2(\mathbb{C}),$ $S(\varphi_i)=\linebreak G_{\mathbb{C}}(\varphi_i)(o)$, and $G_{\mathbb{R}}(\varphi_i)\!=\!\exp(\text{Span}_{\mathbb{R}}\{x_{{\varphi_i}}, y_{{\varphi_i}}, \sqrt{-1}h_{{\varphi_i}}\})\!\simeq\! SL_2(\mathbb{R})$, $D(\varphi_i)\!=\!G_{\mathbb{R}}(\varphi_i)(o)$.

Now each point in $S(\varphi_i)\cap (N_+B/B)$ can be represented by
\begin{align*}
\text{exp}(ze_{{\varphi_i}})=\left[\begin{array}[c]{cc}1&z\\ 0& 1\end{array}\right] \quad \text{for some } z\in \mathbb{C}.
\end{align*}
Thus $S(\varphi_i)\cap (N_+B/B)\simeq \mathbb{C}$. In order to see $D(\varphi_i)$ in $G_\C/B$, we decompose each point in $D(\varphi_i)$ as follows. Let $z=a+bi$ for some $a,b\in\mathbb{R}$, then
\begin{align}\label{x to y}
\exp(ax_{{\varphi_i}}+by_{{\varphi_i}})&=\left[\begin{array}[c]{cc}\cosh |z|&\frac{z}{|z|}\sinh |z|\\ \frac{\bar{z}}{|z|}\sinh |z|& \cosh |z|\end{array}\right] \\
&=\left[\begin{array}[c]{cc}1&\frac{z}{|z|}\tanh |z|\\ 0&1\end{array}\right]\left[\begin{array}[c]{cc}(\cosh |z|)^{-1}&0\\0&\cosh |z|\end{array}\right]\nonumber\\
&\quad \left[\begin{array}[c]{cc}1& 0 \\ \frac{\bar{z}}{|z|}\tanh |z|&1\end{array}\right]\nonumber\\
\notag &=\exp\left[(\frac{z}{|z|}\tanh |z| )e_{{\varphi_i}}\right]\exp\left[-\log (\cosh |z|)h_{{\varphi_i}}\right]\\
\notag &\quad \exp\left[(\frac{\bar{z}}{|z|}\tanh |z|) e_{{-\varphi_i}}\right]\\
&\equiv \exp\left[(\frac{z}{|z|}\tanh |z| )e_{{\varphi_i}}\right] \ (\text{mod }B).\nonumber
\end{align}
So the elements of $D(\varphi_i)$ in $G_\C/B$ can be represented by $\exp[(z/|z|)(\tanh |z|) e_{{\varphi_i}}]$, i.e.
\begin{align*}
\left[\begin{array}[c]{cc}1&  \frac{z}{|z|}\tanh |z|\\0&1\end{array}\right],
\end{align*}
in which $\frac{z}{|z|}\tanh |z|$ is a point in the unit disc $\mathfrak{D}$ of the complex plane. Therefore  $D(\varphi_i)$ is a unit disc $\mathfrak{D}$ in the complex plane $S(\varphi_i)\cap (N_+B/B)$.
 Therefore $$D(\Lambda)\simeq \mathfrak{D}^r\quad\text{and}\quad S(\Lambda)\cap N_+\simeq \mathbb{C}^r. $$ So we have obtained both (i) and (ii).
As a consequence, we get that for any $q\in \check{\T}$, $\P_{+}(q)\in D(\Lambda)$. This implies
\begin{align*}d_E(\P_{+}(p), \P_{+}(q))\leq \sqrt{r}
\end{align*} where $d_E$ is the Eulidean distance on $S(\Lambda)\cap (N_+B/B)$.

To complete the proof, we only need to show that $S(\Lambda)\cap (N_+B/B)$ is totally geodesic in $N_+B/B$. In fact, the tangent space of $N_+$ at the base point is $\mathfrak{n}_+$ and the tangent space of $S(\Lambda)\cap (N_+B/B)$ at the base point is $\text{Span}_{\mathbb{C}}\{e_{{\varphi_1}}, e_{{\varphi_2}}, \dots, e_{{\varphi_r}}\}$. Since $\text{Span}_{\mathbb{C}}\{e_{{\varphi_1}}, e_{{\varphi_2}}, \dots, e_{{\varphi_r}}\}$ is a Lie subalgebra of $\mathfrak{n}_+$, the corresponding orbit $S(\Lambda)\cap (N_+B/B)$ is totally geodesic in $N_+B/B$. Here recall that the basis $\{e_{{\varphi_1}}, e_{{\varphi_2}}, \dots, e_{{\varphi_r}}\}$ is an orthonormal basis.
\end{proof}

Although not needed in the proof of the above theorem, we also show that the above inclusion of $D(\varphi_i)$ in $ D$ is totally geodesic in $D$ with respect to the Hodge metric. In fact, the tangent space of $D(\varphi_i)$ at the base point is  $\text{Span}_{\mathbb{R}}\{x_{{\varphi_i}}, y_{{\varphi_i}}\}$ which satisfies
\begin{align*}
&[x_{{\varphi_i}}, [x_{{\varphi_i}}, y_{{\varphi_i}}]]=4y_{{\varphi_i}},\\
&[y_{{\varphi_i}}, [y_{{\varphi_i}}, x_{{\varphi_i}}]]=4x_{{\varphi_i}}.
\end{align*}
So the tangent space of $D(\varphi_i)$ forms a Lie triple system, and consequently $D(\varphi_i)$ gives a totally geodesic submanifold of $D$. 

The fact that the exponential map of a Lie triple system gives a totally geodesic submanifold of $D$ is from (cf. \cite[Ch4, $\S$7]{Hel}), and we note that this result still holds true for locally homogeneous spaces instead of only for symmetric spaces. 
The pull-back of the Hodge metric on $D(\varphi_i)$ is $G(\varphi_i)$ invariant metric, therefore must be the Poincare metric on the unit disc. In fact, more generally, we have
\begin{lemma}
If $\tilde{G}$ is a subgroup of $G_{\mathbb{R}}$, then the orbit $\tilde{D}=\tilde{G}(o)$ is a totally geodesic submanifold of $D$, and the induced metric on $\tilde{D}$ is $\tilde{G}$ invariant.
\end{lemma}
\begin{proof}
Firstly, $\tilde{D}\simeq \tilde{G}/(\tilde{G}\cap V)$ is a quotient space. The induced metric of the Hodge metric from $D$ is $G_{\mathbb{R}}$-invariant, and therefore $\tilde{G}$-invariant.
Now let $\gamma:\,[0,1]\to \tilde{D}$ be any geodesic, then there is a local one parameter subgroup $S:\,[0,1] \to\tilde{G}$ such that, $\gamma(t)=S(t)\cdot \gamma(0)$. On the other hand, because $\tilde{G}$ is a subgroup of $G_{\mathbb{R}}$, we have that $S(t)$ is also a one parameter subgroup of $G_{\mathbb{R}}$, therefore the curve $\gamma(t)=S(t)\cdot \gamma(0)$ also gives a geodesic in $D$.
Since geodesics on $\tilde{D}$ are also geodesics on $D$, we have proved $\tilde{D}$ is totally geodesic in $D$.
\end{proof}

The following corollary is important to us.
\begin{corollary}\label{X to Y}
The underlying real manifold of $\mathrm{exp}(\mathfrak{p}_+)\cap D$ is diffeomorphic to $G_{\mathbb{R}}/K\simeq \exp(\mathfrak{p}_{0})$.
\end{corollary}
\begin{proof}
As discussed at the beginning of the proof of Lemma \ref{abounded}, for any $s\in \exp(\mathfrak{p}_+)\cap D$, there is an $X \in \mathfrak{p}_{0}$ such that the left translation $\exp (X)\bar{o}=s$, where $\bar{o}=P_{+}(o)$.
From equation \eqref{x to y}, one sees that there exists $Y\in \mathfrak{p}_+$, satisfying $$X =T_0( Y +\tau_0(Y))$$ for a unique real number $T_0$, such that the left translations $\exp (X)\bar{o} = \exp (Y)\bar{o}$. Hence we have a diffeomorphism $$\exp(\mathfrak{p}_{+}) \cap D\to \exp(\mathfrak{p}_0)\simeq G_{\mathbb{R}}/K,$$ by mapping $\exp (Y)\bar{o}$ to $\exp (X)\bar{o}$ with the relation that $X =T_0( Y +\tau_0(Y))$.

For a more detailed description of the above explicit correspondence from $\mathfrak{p}_0$ to $\mathfrak{p}_{+}$ in proving the Harish-Chandra embedding, please see Lemma~7.11 in pages 390--391 in \cite{Hel}, pages 94--97 of \cite{Mok}, or the discussion in pages 463--466 in \cite{Xu}. 
\end{proof}

As proved in Proposition 3 of Chapter 2 in \cite{Schwartz}, for the extended period map $\PP:\, \TT \to D$, there is  a Whitney stratification $$\TT=\cup_{i}\T_i$$ such that if $\Phi_i=\PP|_{\T_i}$, the rank of the tangent map $d\Phi_i$ is constant on $\T_i$. Note that 
the stratification is narrow in the sense that for any open neighborhood $U$ of any point in $\TT$, it induces a Whitney stratification of $U$. We will define the image $\PP(U)\subset D$ of a small open neighborhood $U$ of  $\TT$  under the period map $\PP$ as a {\em horizontal slice}, which is given by the union of the image of each $\T_i$ restricted to the neighborhood $U$. 

Note that we can always take $U$ arbitrarily small as needed. Clearly we have $$\PP(U) =\cup_i L_i$$ where each $L_i= \PP(\T_i\cap U)$  is a smooth manifold when $U$ is small enough, and they induce a Whitney stratification of $\PP(U)$ by the continuity of the tangent map of $\PP$. We remark that locally there are only finitely many strata $\T_i$, since $\TT$ is finite dimensional and each $L_i$ is an integral submanifold of the horizontal distribution induced by the period map.  See for example,  page 36 in \cite{Pflaum} about details related to the Whitney stratifications.  

In fact here we can also directly use the Whitney stratification of $\PP(U)$ for the proof of the following Lemma, while using the Whitney stratification for $\TT$ makes the geometric picture of the period map more transparent. 

As local Torelli theorem holds for Calabi--Yau type manifolds, the extended period map $\PP$ is nondegenerate on $\T_{m}\subset \TT$. 
Since $i_m:\, \T \to \T_m$ is a covering, $i_{m}(U)$ is a neighborhood of $i_m(q)$ in $\T_m\subset \TT$ if $U$ is taken as a small enough neighborhood of a point $q$ in $\T$. Therefore we also call the image $\P(U)=\P^H_m(i_m(U))$ a horizontal slice.





Note that, at any point  $t\in L_{i}$, by the Griffiths transversality, we know that the corresponding real tangent spaces satisfy $$\text{T}_tL_{i} \subset \text{T}_{h,t}D\subset \text{T}_{\bar t}G_\mathbb{R}/K\simeq \mathfrak{p}_0 $$ where $\bar{t}=\pi(t)$. Here the inclusion $\text{T}_{h,t}D\subset \text{T}_{\bar t}G_\mathbb{R}/K$ is induced by the tangent map of $\pi$ at $t$.
Therefore the tangent map of $$\pi|_{L_i}:\, L_i \to G_{\mathbb R}/K $$ at $t\in L_i$ is injective, and $\pi$ is injective in a small neighborhood of $t$ in $L_i$. From this one can see that the following lemma is a straightforward corollary of the Griffiths transversality.

\begin{lemma}\label{locally injective}
The projection map $\pi:\, D \to G_\mathbb{R}/K$ is injective on horizontal slices. That is to say that  for any horizontal slice $\PP(U)$ around any point $s=\PP(q)$ with $\PP(U) =\cup_i L_i$, we can take the open neighborhood $U$ of $q\in \T$  small enough such that $\pi$ is injective on each $L_{i}$. 
\end{lemma}
\begin{proof} 
First from the above discussion, we see that the lemma is an obvious corollary from the Griffiths transversality,  if $\PP(U)$ is smooth. The 
proof for general case is essentially the same, except that we need to use the Whitney stratification of $\PP(U)$ and apply the Griffiths transversality on each stratum. This should be standard in stratified spaces as discussed, for example,  in Section 3.8 of Chapter 1 in \cite{Pflaum}. We give the detailed argument for reader's  convenience.

Let $s=\PP(q)\in D$ and $U$ be a small  open neighborhood of $q$. As described above, we have the Whitney stratification $\PP(U)=\cup_{i} L_{i}$ and each $L_i$ can be identified to the image $\PP(\T_i\cap U)$ of the stratum $\T_i$.  

From Theorem 2.1.2 of \cite{Pflaum}, we know that the tangent bundle $\text{T}\PP(U)$\linebreak is  a stratified space with a smooth structure, such that the projection\linebreak $\text{T}\PP(U)\to \PP(U)$ is smooth and a morphism of stratified spaces.
For any sequence of points $\{s_k\}$ in a horizontal slice $L_i$ converging to $s$, the limit of the tangent spaces, $$\lim_{k\to \infty} \text{T}_{s_k}L_i=\text{T}_sL_i$$ exists by the Whitney conditions, and is defined as the generalized tangent space at $s$ in page 44 of \cite{GM}. Also see the discussion in page 64 of \cite{Pflaum}. 

Denote $\bar{s} =\pi(s)$. With these notations understood, and by the Griffiths transversality, we get the following relations for the corresponding real tangent spaces,
\begin{align*}
\text{T}_s\PP(U)=\cup_i\text{T}_sL_i\simeq (d\PP)_{q}(\text{T}_q\T) &\subset (\mathfrak{g}^{-1,1}\oplus \mathfrak{g}^{1,-1})\cap \mathfrak{g}_{0}\\
&\subset \text{T}_{\bar s}G_\mathbb{R}/K\simeq \mathfrak{p}_0.
\end{align*}
This implies that the tangent map of $$\pi|_{\PP(U)}:\, \PP(U) \to G_{\mathbb R}/K $$ at $s$ is injective in the sense of stratified space, or equivalently it is injective on each $\text{T}_sL_i$ considered as generalized tangent space.

Therefore we can find a small open neighborhood $V$ of $s$ in $D$, such that the restriction of $\pi$ to $\PP(U)\cap V$,  $$\pi|_{\PP(U)\cap V}:\, \PP(U)\cap V \to G_{\mathbb R}/K,$$ is an immersion in the sense of stratified spaces, or equivalently injective on each stratum $L_i\cap U$. 
Now we take $U$ in $\T$ containing $q$ small enough such that $\PP(U)\subset V$.
With such a choice of $U$,  $\pi$ is injective on the horizontal slice $\PP(U)$ in the sense of stratified spaces, and hence injective on each stratum $L_{i}$.
\end{proof}

\begin{lemma}\label{lemma of locallybounded}
For any $z\in \P_{+}({\check{\T}})\subset \exp(\mathfrak{p}_{+})\cap D$, we have $P_{+}^{-1}(z)\cap\linebreak \P({\check{\T}})=\pi^{-1}(z')\cap \P({\check{\T}})$, where $z'=\pi(z)\in G_\mathbb{R}/K$.
\end{lemma}
\begin{proof}
From Corollary \ref{X to Y}, 
the underlying real manifold of $\mathrm{exp}(\mathfrak{p}_+)\cap D$ is diffeomorphic to $G_{\mathbb{R}}/K\simeq \exp(\mathfrak{p}_{0})$. 
As described there, this diffeomorphism is given explicitly by identifying the point 
$$\text{exp}(Y)\bar{o}\in \mathrm{exp}(\mathfrak{p}_+)\cap D$$
with the point $\text{exp}(X)\in \exp(\mathfrak{p}_{0})$, where the vectors $X\in \mathfrak{p}_{0}$ and $Y\in \mathfrak{p}_+$ satisfy the relation that $X =T_0( Y +\tau_0(Y))$ for certain real number $T_{0}$.   

On the other hand, from the definition of the Hodge metric on $D$ in page~297 of \cite{GS}, we know that $\pi:\, D\to G_{\mathbb{R}}/K$ is a Riemannian submersion with the natural homogeneous metrics on $D$ and $G_{\mathbb{R}}/K$.  For more details about this, see also Section 2 of \cite{JostYang}.

Then the real geodesic $$c(t)= \exp(tX)$$ in $\mathrm{exp}(\mathfrak{p}_+)\cap D$ with $X\in \mathfrak{p}_{0}$ connecting the based point $\bar{o}$ and any point $z\in \mathrm{exp}(\mathfrak{p}_+)\cap D$ is the horizontal lift of the geodesic $\pi(c(t))$ in $G_{\mathbb{R}}/K$. This is a basic fact in Riemannian submersion as given in, for example, Corollary~26.12 in page 339 of \cite{Michor}. 

Hence the natural projection $\pi:\,D\to G_\mathbb{R}/K$ maps $c(t)$ isometrically to $\pi(c(t))$.
From this one sees that the projection map $\pi$, when restricted
to the underlying real manifold of $\mathrm{exp}(\mathfrak{p}_+)
\cap D$, is given by the diffeomorphism 
$$ \pi_{+}:\, \mathrm{exp}(\mathfrak{p}_+) \cap D\longrightarrow \mathrm{exp}(\mathfrak{p}_0) \stackrel{\simeq}{\longrightarrow} G_\mathbb{R}/K,$$
and the diagram
$$
\xymatrix{ N_{+}\cap D \ar[r]^-{\pi} \ar[d]^-{P_{+}} & G_\mathbb{R}/K \\
\mathrm{exp}(\mathfrak{p}_+)\cap D \ar[ur]_{\pi_{+}}& ,
}$$
is commutative.
Therefore one concludes that any two points in $N_+\cap D$ are mapped to the same point in $\mathrm{exp}(\mathfrak{p}_+) \cap D$ via $P_+$, if and only if they are are mapped to the same point in $G_\mathbb{R}/K$ via $\pi$. Hence for any $$z\in \P_{+}({\check{\T}})\subset \exp(\mathfrak{p}_{+})\cap D,$$ the projection map $P_+$ maps the fiber $\pi^{-1}(z')\cap \P({\check{\T}}) $ onto the point $z\in \exp(\mathfrak{p}_+) \cap D$, where $z'=\pi(z)\in G_\mathbb{R}/K$.
\end{proof}



\begin{theorem}\label{locallybounded}
The image of the restriction of the period map $\Phi :\, \check{\T}
\to N_+\cap D$ is bounded in $N_+$ with respect to the Euclidean
metric on $N_+$.
\end{theorem}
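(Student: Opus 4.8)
The plan is to mimic the Harish--Chandra embedding theorem for Hermitian symmetric spaces, carried over to the horizontal setting. The key point is that $\Phi$ is horizontal, so at each point of $\check{\mathcal T}$ the image of the tangent map lands in the horizontal subbundle, which by \eqref{n+iso} is governed by $\mathfrak g^{-1,1}$; and the geometry of such horizontal slices of $D$ closely mirrors that of a Hermitian symmetric space $G_\mathbb{R}/K$, as in \cite{GS}. First I would set up the relevant structure theory from Section \ref{preliminary}: take the maximal abelian subspace $\mathfrak a_0=\sum_{i=1}^r\mathbb R(e_{\varphi_i}+e_{-\varphi_i})\subseteq\mathfrak p_0$ from Lemma \ref{stronglyortho} with $\Lambda=\{\varphi_1,\dots,\varphi_r\}$ a set of strongly orthogonal noncompact positive roots, and form the corresponding polydisc-type subgroup generated by the $\mathfrak{sl}_2$-triples attached to each $\varphi_i$. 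The point is that, because the $\varphi_i$ are strongly orthogonal, these triples commute, so the joint orbit is a product of rank-one pieces, each of which one understands explicitly.

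Next I would reduce the global boundedness to a one-variable estimate. Given any point $q\in\check{\mathcal T}$, one wants to bound the coordinates of $\Phi(q)\in N_+$ (written as a block lower-triangular matrix with identity diagonal blocks, in the fixed adapted basis $(\eta_0,\dots,\eta_{m-1})$). Using Proposition \ref{adjoint}, every vector in $\mathfrak p_0$ is $\mathrm{Ad}(K)$-conjugate into $\mathfrak a_0$, so after moving by an element of $K$ (which acts by Hodge isometries and preserves the relevant norms) one may assume the geodesic from the base point $o=\Phi(p)$ toward $\Phi(q)$ has initial direction in $\mathfrak a_0$. Then the relevant $N_+$-coordinates are controlled by the image of $\exp\big(\sum_i t_i(e_{\varphi_i}+e_{-\varphi_i})\big)\cdot o$ in each commuting $\mathfrak{sl}_2$-factor; in each factor this is the standard computation showing that the Borel embedding of the unit disc (or of $\mathbb H$) into $\mathbb P^1$ via the Cayley transform has image the bounded disc, i.e. the coordinate is $\tanh t_i$ (up to normalization), hence of modulus $<1$. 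Summing/taking the product over the $r$ strongly orthogonal factors and using that $K$-conjugation does not enlarge the Euclidean norm on $N_+$ gives a uniform bound on $\|\Phi(q)\|$ independent of $q$.

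The main obstacle I anticipate is the bookkeeping needed to pass rigorously from ``the $N_+$-coordinates of $\Phi(q)$'' to ``a finite product of rank-one contributions.'' Concretely: one must check that the horizontal direction of $\Phi$ actually lies in the span of the noncompact part $\mathfrak p$ (this uses horizontality plus $\mathfrak g^{-1,1}\subseteq\mathfrak p$ and $\mathfrak g^{+1,-1}\subseteq\mathfrak p$, following from the description of $\mathfrak p^{\pm}$ in Lemma \ref{RootSumNonRoot} and Lemma \ref{puretype}), so that Proposition \ref{adjoint} is applicable; and one must verify that the $N_+$-coordinate map is ``multiplicative enough'' along the commuting product of $SL_2$'s that the estimate in each factor really does bound the full matrix entries. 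This is exactly the place where one invokes that the root vectors are taken to be unit vectors in the Euclidean metric on $N_+$ induced by the Hodge metric at $o$, as stipulated just before the statement. Once the rank-one model computation ($|\tanh t|<1$) is in place and the strong orthogonality is used to decouple the factors, boundedness of $\Phi|_{\check{\mathcal T}}$ in $N_+$ follows; I would also remark that no properness or completeness of $\check{\mathcal T}$ is needed here, only horizontality of $\Phi$ and the homogeneous structure of $D$.
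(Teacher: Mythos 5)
Your proposal follows essentially the same route as the paper's proof: reduce via horizontality to a direction in $(\mathfrak g^{-1,1}\oplus\mathfrak g^{1,-1})\cap\mathfrak g_0\subseteq\mathfrak p_0$, conjugate into the maximal abelian subspace $\mathfrak a_0$ spanned by the strongly orthogonal root vectors, decompose into commuting $\mathfrak{sl}_2$-factors, and use the explicit $\tanh$ computation to land each factor in a unit disc; the ``multiplicativity'' issue you flag is exactly what the paper resolves by observing that $S(\Lambda)\cap(N_+B/B)\simeq\mathbb C^r$ is totally geodesic in $N_+B/B$, giving the uniform bound $d_E(\Phi(p),\Phi(q))\le\sqrt r$.
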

\begin{proof}
In Lemma \ref{abounded}, we have already proved that the image of
$\P_{+}=P_{+}\circ \P$ is bounded with respect to the Euclidean metric on
$\exp(\mathfrak{p}_{+})\subseteq N_+$. Now together with the Griffiths transversality, we
will deduce the boundedness of the image of $\P :\, \check{\T} \to
N_+\cap D$ from the boundedness of the image of $\P_{+}$.

Our proof can be divided into two steps. It is an elementary argument to apply the Griffiths transversality on $\TT$.

(i) We claim that there are only finite points in the inverse image $$(P_{+}|_{\P(\check{\T})})^{-1}(z)$$ for any $z\in \P_{+}(\check{\T})$. Here $P_{+}|_{\P(\check{\T})}$ denotes the restriction of $P_+$ to $\P(\check{\T})$.

Otherwise, by Lemma \ref{lemma of locallybounded}, we have $\{q_i\}_{i=1}^{\infty}\subseteq \check{\T}$ and 
$$\{y_i=\P(q_i)\}_{i=1}^{\infty}\subseteq (P_{+}|_{\P(\check{\T})})^{-1}(z)$$ with limiting point $y_{\infty}\in \pi^{-1}(z')\simeq K/V$, since $K/V$ is compact. We project the points $q_i$ to $q'_i\in \Z$ via the universal covering map $\pi_{m}:\, \T \to \Z$. There must be infinite many $q'_i$'s. Otherwise, we have a subsequence $\{q_{j_k}\}$ of $\{q_j\}$ such that $\pi_{m}(q_{j_k})=q'_{i_0}$ for some $i_0$ and
$$y_{j_k}=\P(q_{j_k})=\gamma_{k}\P(q_{j_0})=\gamma_{k}y_{j_0},$$
where $\gamma_k \in \Gamma$ is the monodromy action. Since $\Gamma$ is discrete, the subsequence $\{y_{j_k}\}$ is not convergent, which is a contradiction.

Now we project the points $q_i$ on $\Z$ via the universal covering map $\pi_{m}:\, \T \to \Z$ and still denote them by $q_i$ without confusion. Then the sequence $\{q_i\}_{i=1}^{\infty}\subseteq \Z$ has a limiting point $q_\infty$ in $\bar{\mathcal{Z}}_{m}$, where $\bar{\mathcal{Z}}_{m}$ is the compactification of $\Z$.
By continuity the period map $\Phi :\,  \Z \to D/\Gamma$ can be extended over $q_\infty$ with $$\Phi(q_\infty)=\pi_D(y_\infty) \in D/\Gamma,$$ where $\pi_D :\, D\to D/\Gamma$ is the projection map. Thus $q_\infty$ lies $\ZZ$. 

Now we regard the sequence $\{q_i\}_{i=1}^{\infty}$ as a convergent sequence in $\ZZ$ with limiting point $q_\infty \in \ZZ$. Let $V$ be a small open neighborhood of $q_{\infty}$, and $\tilde{q}_{\infty}$ be its lifting to $\T^{H}_{m}$. Let $U$ be a small open neighborhood of $\tilde{q}_{\infty}$ such that $\pi_{m}^{H}:\, U\to V$ is a diffeomorphism.

 We can choose a sequence $\{\tilde{q}_i\}_{i=1}^{\infty}\subseteq \TT$ with limiting point $\tilde{q}_\infty\in \TT$ such that $\tilde{q}_i\in U$ maps to $q_i\in V$ for $i$ large via the universal covering map $\pi_{m}^{H} :\,  \TT\to \ZZ$ and $$\PP(\tilde{q}_i)=y_i \in D,$$ for $i\ge 1$ and $i=\infty$. Since the extended period map $\PP :\, \TT \to D$ still satisfies the Griffiths transversality by Lemma \ref{extendedtransversality}, we can choose the neighborhood $U$ of $\tilde{q}_\infty$ small enough such that $\PP(U)=\cup_{i}L_{i}$ is a disjoint union of Whitney stratifications in Lemma \ref{locally injective} and $\pi$ is injective on each $L_{i}$.
By passing to a subsequence, we may assume that the points $\tilde{q}_i$ for $i$ sufficiently large are mapped into some stratum $L_i$, which is a contradiction.

Denote $P_{+}|_{\PP(\TT)}$ to be the restricted map $$P_{+}|_{\PP(\TT)}:\, N_{+}\cap \PP(\TT) \to \exp(\mathfrak{p}_{+})\cap D.$$
In fact, a similar argument also proves that there are only finite points in\linebreak $(P_{+}|_{\PP(\TT)})^{-1}(z)$, for any $z\in \P_{+}(\check{\T})$. Furthermore, we have the following conclusion.

(ii) The restricted map $$P_{+}|_{\PP(\TT)}:\, N_{+}\cap \PP(\TT) \to \exp(\mathfrak{p}_{+})\cap D$$ is a finite holomorphic ramified covering map onto its image. The proof is a direct application of some basic results in the book of Grauert--Remmert \cite{GR}.

From (i) and the definition of finite map in analytic geometry as given in page 47 of  \cite{GR}, we only need to show that $P_{+}|_{\PP(\TT)}$ is closed.
In fact, we know that $${\Phi}_{{\mathcal{Z}^H_m}}:\, {\mathcal{Z}}^H_m\to D/\Gamma$$ is a proper map by the result of Griffiths, and hence ${\Phi}_{{\mathcal{Z}^H_m}}( {\mathcal{Z}^H_m})$ is closed in $D/\Gamma$. So $$\PP(\TT)=\pi_{D}^{-1}({\Phi}_{{\mathcal{Z}^H_m}}( {\mathcal{Z}^H_m}))$$ is also closed in $D$, where $\pi_{D} : \, D \to D/\Gamma$ is the projection map.
Hence any closed subset $E$ of $\PP(\TT)$ is also a closed subset of $D$. 
Since the natural projection map $$\pi:\, D \to G_{\mathbb{R}}/K$$ is a proper map, one sees that $\pi$ is a closed map, which implies that $\pi(E)$ is closed in $G_{\mathbb{R}}/K$. Moreover, from the proof of Lemma \ref{lemma of locallybounded}, one  sees that $P_{+}(E)$ is diffeomorphic to $\pi(E)$ through the diffeomorphism $$\exp(\mathfrak{p}_{+})\cap D\simeq \mathfrak{p}_0\simeq G_{\mathbb R}/K,$$ which implies that $P_{+}(E)$ is closed in $\exp(\mathfrak{p}_{+})\cap D$.
Therefore we have proved that $P_{+}|_{\PP(\TT)}$ is a closed map.

As proved in page 171 of \cite{GR},  the image of an irreducible complex variety under holomorphic map is still irreducible. 
Since $\TT$ is irreducible, we know that $\PP(\TT)$ is an irreducible analytic subvariety of $\check{D}$. 
The intersection $N_{+}\cap \PP(\TT)$ is equal to $\PP(\TT)$ minus the proper analytic subvariety $\PP(\TT)\cap (\check{D}\setminus N_{+})$.
Hence, from the results in page 171 of  \cite{GR}, we get that  $N_{+}\cap \PP(\TT)$ and $P_{+}(N_{+}\cap \PP(\TT))$ are both irreducible. 

 By the result in page 179  of \cite{GR}, the projection map 
$$P_{+}|_{\PP(\TT)} :\, N_{+}\cap \PP(\TT)\to P_{+}(N_{+}\cap \PP(\TT))$$ 
is a finite holomorphic map, or equivalently, finite ramified covering map. Let $r(z)$ be the cardinality of the fiber $(P_{+}|_{\PP(\TT)})^{-1}(z)$ for any $$z\in P_{+}(N_{+}\cap \PP(\TT))$$ outside the ramification locus. From the result in page 135 in \cite{GR}, we know that $r(z)=r$ is constant on $P_{+}(N_{+}\cap \PP(\TT))$ outside the ramified locus which is an analytic subset. From the above proof of (i), we also know that $r$ is finite. Hence $P_{+}|_{\PP(\TT)}$ is an $r$-sheeted ramified covering,
which together with Lemma \ref{abounded},  implies that the image $\P(\check{\T})\subseteq N_+\cap D$ is bounded.
\end{proof}

In early versions of this paper we wrote a more elementary proof of (ii), which only used the Griffiths transversality and a simple limiting argument similar to the proof of (i). The new proof we give here is more illuminating, since it has the advantage of involving more geometric structures of the image of period map and period domain.

\begin{lemma}\label{transversal}Let $p\in\mathcal{T}$ be the base point with $\Phi(p)=\{F^n_p\subseteq F^{n-1}_p\subseteq \cdots\subseteq F^0_p\}.$ Let $q\in \mathcal{T}$ be any point with $\Phi(q)=\{F^n_q\subseteq F^{n-1}_q\subseteq \cdots\subseteq F^0_q\}$, then $\Phi(q)\in N_+$ if and only if $F^{k}_q$ is isomorphic to $F^k_p$ for all $0\leq k\leq n$.
\end{lemma}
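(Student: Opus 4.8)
The plan is to identify $N_+$ explicitly as the subset of $\check D$ consisting of those filtrations which are "in general position" with respect to the conjugate of the reference filtration, and then to translate that condition into the isomorphism condition $F^k_q \cong F^k_p$ in the statement. Recall from Section 3.1 that we identify $N_+$ with its orbit $N_+ B/B \subseteq \check D$ of the base point $o = \Phi(p)$, where $B$ is the stabilizer of the reference filtration $\{F^k_p\}$. The key linear-algebra fact I would invoke is the standard "big cell" description: for $g \in G_\C$, the coset $gB$ lies in $N_+ B/B$ if and only if the filtration $g\cdot\{F^k_p\}$ is \emph{opposite} (or transverse in the appropriate graded sense) to the reference Hodge decomposition, i.e. the composition $g F^k_p \hookrightarrow H^n(M_p,\C) \twoheadrightarrow H^n(M_p,\C)/F^{n-k+1}_p$ is an isomorphism for every $k$. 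Here I am using the decomposition $H^n_{pr}(M,\C) = F^k \oplus \bar F^{n-k+1}$ from \eqref{cl46} for the reference point, which is exactly what makes $\bar F^{n-k+1}_p$ a complement to $F^k_p$.

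First I would make precise the matrix picture already set up in the preliminaries: with the fixed adapted basis $(\eta_0,\dots,\eta_{m-1})$ for the Hodge decomposition of $o$, an element of $N_+$ is a block lower-triangular matrix with identity diagonal blocks, and hence the filtration $\Phi(q)$ lies in $N_+$ precisely when there is a basis adapted to the filtration $\{F^k_q\}$ whose change-of-basis matrix to $(\eta_i)$ is block lower-triangular with identity blocks; by block row reduction this is equivalent to requiring that, writing each $F^k_q$ in coordinates, the top-left $f^k \times f^k$ block of its coordinate matrix be invertible. That invertibility statement is exactly the assertion that the projection $F^k_q \to H^n(M_p,\C)/\bar F^{n-k+1}_p \cong F^k_p$ (projection along $\bar F^{n-k+1}_p$) is an isomorphism for all $n-s \le k \le s$. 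Since the target has dimension $f^k = \dim F^k_q$, this map being an isomorphism is the same as it being injective, and it is injective iff $F^k_q \cap \bar F^{n-k+1}_p = 0$.

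So the proof reduces to the clean equivalence: $\Phi(q) \in N_+$ iff $F^k_q \cap \bar F^{n-k+1}_p = 0$ for all $n-s \le k \le s$ iff the natural projection $F^k_q \to F^k_p$ is an isomorphism for all such $k$. For the forward direction, if $\Phi(q) = c \cdot o$ with $c \in N_+$, then $c$ is block lower-triangular unipotent, so $c F^k_p$ projects isomorphically onto $F^k_p$ modulo $\bar F^{n-k+1}_p$, giving the isomorphisms. For the converse, given the isomorphisms $F^k_q \xrightarrow{\ \sim\ } F^k_p$ for all $k$, I would build an adapted basis for $\{F^k_q\}$ inductively from $k = s$ down to $k = n-s$, at each stage choosing the new basis vectors to be the unique preimages under the projection of the fixed adapted basis vectors $\eta_i$ spanning $H^{k,n-k}_p$; the resulting change-of-basis matrix is then block lower-triangular with identity diagonal blocks by construction, which exhibits $\Phi(q) \in N_+ B/B$. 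I expect the main obstacle to be purely bookkeeping: carefully matching the indexing conventions of the blocks in \eqref{block} with the filtration steps and the truncation $F^n = \cdots = F^{s+1} = 0$, $F^{n-s} = \cdots = F^0 = H^n(M_p,\C)$ special to the Calabi–Yau type case, so that "block lower-triangular with identity diagonal blocks" really does correspond to the stated collection of isomorphisms and nothing is lost at the two ends of the filtration.
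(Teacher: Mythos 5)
Your proposal is correct and follows essentially the same route as the paper: both arguments fix the adapted basis at the base point, characterize membership in $N_+B/B$ by the existence of a block $LU$-decomposition of the transition matrix, reduce that to the nonvanishing of the leading principal block minors, and identify those minors with the projections $F^k_q\to F^k_p$. Your added reformulation via $F^k_q\cap \bar F^{n-k+1}_p=0$ is a useful clarification of what ``isomorphic'' must mean here (the paper leaves the projection along $\bar F^{n-k+1}_p$ implicit), but it is the same argument.
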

\begin{proof}For any $q\in \mathcal{T}$, we choose an arbitrary adapted basis $\{\zeta_0, \dots, \zeta_{m-1}\}$ for the given Hodge filtration $\{F^n_q\subseteq F^{n-1}_q\subseteq\cdots\subseteq F^0_q\}$. Recall that $\{\eta_0, \dots,\linebreak \eta_{m-1}\}$ is the adapted basis for the Hodge filtration $\{F^n_p \subseteq F^{n-1}_p\subseteq \cdots\subseteq F^0_p\}$ for the base point $p$. Let $[A^{i,j}(q)]_{0\leq i,j\leq n}$ be the transition matrix between the basis $\{\eta_0,\dots, \eta_{m-1}\}$ and $\{\zeta_0, \dots, \zeta_{m-1}\}$ for the same vector space $H^{n}(M, \mathbb{C})$, where $A^{i,j}(q)$ are the corresponding blocks.

Recall that elements in $N_+$ and $B$ have matrix representations with the fixed adapted basis at the base point: elements in $N_+$ can be realized as nonsingular block lower triangular matrices with identity blocks in the diagonal; elements in $B$ can be realized as nonsingular block upper triangular matrices. Therefore $$\Phi(q)\in N_+=N_+B/B\subseteq \check{D}$$ if and only if its matrix representation $[A^{i,j}(q)]_{0\leq i,j\leq n}$ can be decomposed as $L(q)\cdot U(q)$, where $L(q)$ is a nonsingular block lower triangular matrix with identities in the diagonal blocks, and $U(q)$ is a nonsingular block upper triangular matrix. 

By basic linear algebra, we know that $[A^{i,j}(q)]$ has such decomposition if and only if $\det[A^{i,j}(q)]_{0\leq i, j\leq k}\neq 0$ for any $0\leq k\leq n$. In particular, we know that $[A(q)^{i,j}]_{0\leq i,j\leq k}$ is the transition map between the bases of $F^k_p$ and $F^k_q$. Therefore, $\det([A(q)^{i,j}]_{0\leq i,j\leq k})\neq 0$ if and only if $F^k_q$ is isomorphic to $F^k_p$.
\end{proof}\begin{lemma}\label{codimension}The subset $\check{\mathcal{T}}$ is an open dense submanifold in $\mathcal{T}$, and $\mathcal{T}\backslash \check{\mathcal{T}}$ is an analytic subvariety of $\mathcal{T}$ with $\text{codim}_{\mathbb{C}}(\mathcal{T}\backslash \check{\mathcal{T}})\geq 1$.
\end{lemma}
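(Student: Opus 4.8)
The plan is to combine Lemma~\ref{transversal} with the fact that $\check{\mathcal{T}}$ is cut out inside $\mathcal{T}$ by the non-vanishing of finitely many holomorphic determinants. First I would fix the base point $p$ and its adapted basis $\{\eta_0,\dots,\eta_{m-1}\}$, and over any simply connected coordinate patch $U$ of $\mathcal{T}$ choose a holomorphically varying adapted basis $\{\zeta_0(q),\dots,\zeta_{m-1}(q)\}$ for the Hodge filtration $\{F^k_q\}$; this is possible because the Hodge bundles $F^k$ are holomorphic subbundles of the trivial bundle $H^n(M,\mathbb{C})\times\mathcal{T}$ and the period map $\Phi$ is holomorphic. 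The transition matrix $[A^{i,j}(q)]$ between $\{\eta_i\}$ and $\{\zeta_i(q)\}$ then depends holomorphically on $q\in U$. By Lemma~\ref{transversal} (equivalently, by the $LU$-decomposition criterion recalled in its proof), $q\in\check{\mathcal{T}}$ if and only if $\det\big([A^{i,j}(q)]_{0\le i,j\le s-k}\big)\ne 0$ for every $n-s\le k\le s$. Hence on $U$ the complement $\mathcal{T}\setminus\check{\mathcal{T}}$ is the common zero locus of the finitely many holomorphic functions $g_k(q):=\det\big([A^{i,j}(q)]_{0\le i,j\le s-k}\big)$, so it is an analytic subvariety of $U$; gluing over a cover of $\mathcal{T}$ shows $\mathcal{T}\setminus\check{\mathcal{T}}$ is an analytic subvariety of $\mathcal{T}$, and $\check{\mathcal{T}}$ is correspondingly open in $\mathcal{T}$.

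Next I would check that $\check{\mathcal{T}}$ is nonempty: the base point $p$ itself lies in $\check{\mathcal{T}}$, since $\Phi(p)=o$ corresponds to the identity coset in $N_+B/B$, i.e. all the determinants $g_k(p)$ equal $1$. Since $\mathcal{T}$ is connected (Proposition~\ref{imp}) and $\mathcal{T}\setminus\check{\mathcal{T}}$ is a proper analytic subvariety, the open set $\check{\mathcal{T}}$ is automatically dense in $\mathcal{T}$, and it is a submanifold because it is open. It remains to bound the codimension. An analytic subvariety has complex codimension $\ge 1$ as soon as it has empty interior, which here is immediate from density of $\check{\mathcal{T}}$; alternatively, since near any point of $\mathcal{T}\setminus\check{\mathcal{T}}$ at least one of the $g_k$ is a nonzero holomorphic function vanishing there, its zero locus is a hypersurface, so the intersection defining $\mathcal{T}\setminus\check{\mathcal{T}}$ has codimension $\ge 1$. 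This gives $\text{codim}_{\mathbb{C}}(\mathcal{T}\setminus\check{\mathcal{T}})\ge 1$.

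The only genuinely delicate point is the existence of a holomorphic local frame $\{\zeta_i(q)\}$ for the Hodge filtration with $[A^{i,j}(q)]$ holomorphic in $q$; this follows from the holomorphicity of the period map and of the tautological Hodge subbundles $F^k\subset H^n(M,\mathbb{C})\times\mathcal{T}$ over $\mathcal{T}$, so the determinants $g_k$ are genuinely holomorphic rather than merely smooth. Everything else — the $LU$-criterion, connectedness, and the elementary fact that a proper analytic subvariety has codimension at least one — is standard. I expect this step (setting up the holomorphic frame and invoking the $LU$-decomposition criterion cleanly) to be the main, though modest, obstacle.
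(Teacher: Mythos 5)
Your proposal is correct and follows essentially the same route as the paper: the determinant ($LU$-decomposition) criterion from Lemma~\ref{transversal}, the observation that the base point $p$ lies in $\check{\mathcal{T}}$, and connectedness of $\mathcal{T}$ to conclude the complement is a proper analytic subvariety of codimension at least one. The only cosmetic difference is that you establish analyticity by building local holomorphic frames on $\mathcal{T}$, whereas the paper notes that $\check{D}\setminus N_+$ is an analytic subvariety of $\check{D}$ and takes its preimage under the holomorphic map $\Phi$ — the two formulations are equivalent.
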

\begin{proof}
From Lemma \ref{transversal}, one can see that $\check{D}\setminus N_+\subseteq \check{D}$ is defined as an analytic subvariety by equation
\begin{align*}
\Pi_{0\leq k\leq n}\det [A^{i,j}(q)]_{0\leq i,j\leq k}=0 .
\end{align*}
Therefore $N_+$ is dense in $\check{D}$, and that $\check{D}\setminus N_+$ is an analytic subvariety, which is closed in $\check{D}$ and $\text{codim}_{\mathbb{C}}(\check{D}\backslash N_+)\geq 1$. We consider the period map  $\Phi:\,\mathcal{T}\to \check{D}$ as a holomorphic map to $\check{D}$, then $$\mathcal{T}\setminus \check{\mathcal{T}}=\Phi^{-1}(\check{D}\setminus N_+)$$ is the preimage of $\check{D}\setminus N_+$ of the holomorphic map $\Phi$. Therefore $\mathcal{T}\setminus \check{\mathcal{T}}$ is also an analytic subvariety and a closed set in $\mathcal{T}$. Because $\mathcal{T}$ is smooth and connected, $\mathcal{T}$ is irreducible. If $\dim(\mathcal{T}\setminus \check{\mathcal{T}})=\dim\mathcal{T}$, then $\mathcal{T}\setminus \check{\mathcal{T}}=\mathcal{T}$ and $\check{\mathcal{T}}=\emptyset$, but this contradicts to the fact that the reference point $p$ is in $\check{\mathcal{T}}$. Thus we conclude that $\dim(\mathcal{T}\!\setminus\! \check{\mathcal{T}})\!<\!\dim\mathcal{T}$, and consequently $\text{codim}_{\mathbb{C}}(\mathcal{T}\backslash \check{\mathcal{T}})\!\geq\! 1$.
\end{proof}
\begin{remark}
We can also prove this lemma in a more direct manner. By using notation in the proof of Lemma \ref{transversal}, for any $q\in\mathcal{T}$, let $U_q$ be a neighborhood of $q$ such that all Hodge bundles $\{F^k\}_{0\leq k\leq n}$ are trivial over $U_q$. For any $r\in U_q$, let $$\{\zeta(r)=\{\zeta_0(r), \dots, \zeta_{m-1}(r)\}\}_{r\in U_q}$$ be a holomorphic family of adapted bases for the Hodge filtrations over $U_q$, where $\zeta(r)$ is an adapted basis for the Hodge filtration at $r\in U_q$. Let $\{A(r)\}_{r\in U_q}$ be the holomorphic family of transition matrices, where $A(r)$ is the transition matrix between the adapted basis $\eta$ to the Hodge filtration at the reference point $p$ and the adapted basis $\zeta(r)$ to the Hodge filtration at any point $r\in U_q$. 

From the definition of $\check{\mathcal{T}}$, we get that $r\in U_q\setminus (U_q\cap \check{\mathcal{T}})$ if and only if $r$ satisfies the following local holomorphic equation,
\begin{align*}
\Pi_{0\leq k\leq n}\det [A^{i,j}(r)]_{0\leq i,j\leq k}=0.
\end{align*}
Since $\mathcal{T}$ is irreducible and $\check{\mathcal{T}}\neq \emptyset$, we have that $\mathcal{T}\setminus\check{\mathcal{T}}$ is a divisor on $\mathcal{T}$. Therefore the complex codimension of $\mathcal{T}\setminus\check{\mathcal{T}}$ in $\mathcal{T}$ is greater than or equal to $1$.
\end{remark}
\begin{corollary}\label{image}The image of $$\Phi:\mathcal{T}\rightarrow D$$ lies in $N_+\cap D$ and is bounded with respect to the Euclidean metric on $N_+$.
\end{corollary}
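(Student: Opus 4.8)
The plan is to combine the boundedness statement of Theorem \ref{locallybounded} on $\check{\mathcal{T}}$ with the fact from Lemma \ref{codimension} that $\mathcal{T}\setminus\check{\mathcal{T}}$ is a proper analytic subvariety, and then to invoke the Riemann extension theorem, i.e.\ the removable singularity theorem for holomorphic functions that are locally bounded across an analytic subvariety of codimension $\geq 1$.

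More precisely, I would first fix the base point $p\in\mathcal{T}$ together with the adapted basis as above, so that $N_+\subseteq\check{D}$ is identified with $\mathbb{C}^d$ through the entries below the diagonal blocks of the block lower triangular matrices representing its elements. Over $\check{\mathcal{T}}=\Phi^{-1}(N_+)$ the period map is then given by $d$ holomorphic coordinate functions, each of which is bounded by Theorem \ref{locallybounded}. For any point $q_0\in\mathcal{T}\setminus\check{\mathcal{T}}$ choose a coordinate polydisc $U\ni q_0$ in $\mathcal{T}$; then $U\setminus\check{\mathcal{T}}$ is a proper analytic subvariety of $U$ by Lemma \ref{codimension}, and the above coordinate functions are holomorphic and bounded on $U\cap\check{\mathcal{T}}$. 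By the Riemann extension theorem each of them extends holomorphically across $U\setminus\check{\mathcal{T}}$, producing a holomorphic map $\widetilde\Phi\colon U\to N_+\cong\mathbb{C}^d$ with $\widetilde\Phi|_{U\cap\check{\mathcal{T}}}=\Phi|_{U\cap\check{\mathcal{T}}}$.

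It then remains to identify $\widetilde\Phi$ with $\Phi$ on all of $U$. Since $\Phi\colon\mathcal{T}\to\check{D}$ is holomorphic — in particular continuous — everywhere, and $\check{\mathcal{T}}$ is open and dense in $\mathcal{T}$ by Lemma \ref{codimension}, the two continuous maps $\widetilde\Phi$ and $\Phi|_U$ agree on the dense subset $U\cap\check{\mathcal{T}}$ and hence coincide on $U$. Therefore $\Phi(U)\subseteq N_+$; since $q_0$ was arbitrary this gives $\Phi(\mathcal{T})\subseteq N_+$, and combined with $\Phi(\mathcal{T})\subseteq D$, which holds by the very definition of the period map, we obtain $\Phi(\mathcal{T})\subseteq N_+\cap D$. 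Boundedness follows once more from density and continuity: $\Phi(\mathcal{T})$ is contained in the closure of $\Phi(\check{\mathcal{T}})$, which is bounded in $N_+$ by Theorem \ref{locallybounded}.

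I do not expect a genuine obstacle here; the statement is essentially a formal consequence of Theorem \ref{locallybounded} and Lemma \ref{codimension}. The only points requiring a little care are: (i) that the $N_+$-chart provides honest global holomorphic coordinates on $N_+\cong\mathbb{C}^d$, so that ``bounded map into $N_+$'' literally means a bounded $\mathbb{C}^d$-valued holomorphic map to which the Riemann extension theorem applies with no issue about a ``boundary of $N_+$''; and (ii) that the extension obtained on each $U$ is really the original period map and not some unrelated holomorphic map — which, as above, is forced by continuity and the density of $\check{\mathcal{T}}$. Patching the local extensions is then immediate since they all agree with $\Phi$.
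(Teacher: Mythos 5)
Your proposal is correct and follows essentially the same route as the paper: combine the boundedness of $\Phi|_{\check{\mathcal{T}}}$ from Theorem \ref{locallybounded} with the codimension statement of Lemma \ref{codimension}, apply the Riemann extension theorem to get a holomorphic extension into $N_+\cong\mathbb{C}^d$, and identify the extension with $\Phi$ (the paper does this via the identity theorem on the open dense set $\check{\mathcal{T}}$, you via continuity of $\Phi$ into $\check{D}$ and density — both are fine).
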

\begin{proof}According to Lemma \ref{codimension}, $\mathcal{T}\backslash\check{\mathcal{T}}$ is an analytic subvariety of $\mathcal{T}$ and the complex codimension of $\mathcal{T}\backslash\check{\mathcal{T}}$ is at least one. By Theorem \ref{locallybounded}, the holomorphic map $\Phi:\,\check{\mathcal{T}}\rightarrow N_+\cap D$ is bounded in $N_+$ with respect to the Euclidean metric. Thus by the Riemann extension theorem, there exists a holomorphic map $\Phi': \,\mathcal{T}\rightarrow N_+\cap D$ such that $$\Phi'|_{{\check{\mathcal{T}}}}=\Phi|_{{\check{\mathcal{T}}}}.$$ Since as holomorphic maps, $\Phi'$ and $\Phi$ agree on the open subset $\check{\mathcal{T}}$, they must be the same on the entire $\mathcal{T}$. Therefore, the image of $\Phi$ is in $N_+\cap D$, and the image is bounded with respect to the Euclidean metric on $N_+.$ As a consequence, we also get $\mathcal{T}=\check{\mathcal{T}}=\Phi^{-1}(N_+).$ 
\end{proof}

Recall that in Section \ref{Hodge metric completion}, we have proved that $\Phi_{m}:\, \T_{m}\to D$ is holomorphic with $$\Phi_{m}(\T_{m})=\PP(i_{m}(\T))=\Phi(\T)$$ and $\text{codim}_{\C}(\TT\setminus \T_{m})\ge 1$. Corollary \ref{image} implies that the image of $$\P_m:\, \T_m \to N_+\cap D$$ is bounded in $N_+$.

\begin{corollary}\label{extended-boundedness}The image of $$\PP:\, \TT\!\rightarrow\! D$$ lies in $N_+\!\cap\! D$ and is bounded with respect to the Euclidean metric on $N_+$.
\end{corollary}
\begin{proof}
This follows directly from boundedness of  $\Phi$, and that $\Phi_m^H$ is a continuous extension of $\Phi$ to $\T^H_m$.  Indeed let $q\in \T^H_m$ and $\{ q_n\}$ be a sequence of points in $\T$ with limit $q$.
Then we have $$\P^H_m(q)=\lim_{n\to\infty}\, \P(q_n).$$ Therefore the boundedness of $\P$ gives the boundedness of $\P^H_m$.
\end{proof}

\section{Affine structures and injectivity of extended period map}\label{Holomorphic affine}
In Section \ref{affine on T},  we introduce the abelian subalgebra $\mathfrak{a}$ and the abelian Lie group $A=\text{exp}(\mathfrak{a})$. We then define the projection map $$P:\, N_+\cap D\to A\cap D$$ and  $$\Psi=P\circ \Phi:\, \check{\T}\to  A\cap D.$$ We extend the bounded holomorphic map $$\Psi:\, \check{\T}\to A\cap D$$ over $\T$ as $$\Psi:\, \T\to A\cap D$$ such that $\Psi=P\circ \Phi$. Then we show that the holomorphic map $$\Psi:\, {\T}\to A\cap D \subset A\simeq \C^{N}$$ is nondegenerate, therefore induces a global affine structure on $\T$. 

In Section \ref{affine on TH}, we extend the map $\Psi:\, {\T}\to A\cap D$ over $\TT$ to get a holomorphic map $$\Psi_{m}^{H}:\, \TT \to A\cap D,$$ where $\Psi_{m}^{H}=P\circ \PP$. Then we show that $\Psi_{m}^{H}$ is non-degenerate and hence defines a global affine structure on $\TT$. Then the completeness of $\TT$ with the induced Hodge metric implies that $$\Psi_{m}^{H}:\, \TT \to A\cap D$$ is a covering map.
In Section \ref{injective}, we prove that $\Psi^H_{m}$ is an injection by using the Hodge metric completeness and the global holomorphic affine structure on $\mathcal{T}^H_{m}$. As a corollary, we show that the holomorphic map $\Phi_{m}^H$ is an injection.

\subsection{Affine structure on the Teichm\"uller space}\label{affine on T}

Let $\mathfrak{a} \subseteq \mathfrak{n}_+$ be the abelian subalgebra of
$\mathfrak{n}_+$, defined by $$\mathfrak{a}=d\P_{p}(\text{T}_{p}^{1,0}\T)\subseteq \text{T}^{1,0}_oD\simeq \mathfrak{n}_+.$$
By Griffiths transversality, $\mathfrak{a}\subseteq \g^{-1,1}$ is an abelian
subspace determined by the tangent map of period map
$$d\P :\, \text{T}^{1,0}{\T} \to \text{T}^{1,0}D.$$ Let
$$A\triangleq \exp(\mathfrak{a})\subseteq N_+.$$
Then $A$ can be considered as a complex Euclidean subspace of $N_{+}$ with the induced Euclidean metric from $N_+$.

Define the projection map $P:\, N_+\cap D\to A \cap D$ by
$$ P=\text{exp}\circ p\circ \text{exp}^{-1}$$
where $\text{exp}^{-1}:\, N_+ \to \mathfrak{n}_+$ is the inverse of the isometry $\text{exp}:\, \mathfrak{n}_+ \to N_+$, and $$p:\, \mathfrak{n}_+\to \mathfrak{a}$$ is the projection map from the complex Euclidean space $\mathfrak{n}_+$  to its  Euclidean subspace $\mathfrak{a}$.

The period map $\P : \, {\T}\to N_+\cap D$ composed
with the projection map $P$ gives a holomorphic map
\begin{equation}\label{maptoA}
\Psi :\,  {\T} \to A\cap D
\end{equation}
where $\Psi=P \circ \P$.
Similarly we define $$\Psi_{m}^{H} :\, \TT \to A\cap D$$ by  $\Psi_{m}^{H}= P\circ \Phi_{m}^{H}$.
We will prove that the map \eqref{maptoA} defines a global affine structure on the Teichm\"uller space $\T$.
First we review the definition of complex affine structure on a complex manifold.
\begin{definition}
Let $M$ be a complex manifold of complex dimension $n$. If there
is a coordinate cover $\{(U_i,\,\phi_i);\, i\in I\}$ of M such
that $\phi_{ik}=\phi_i\circ\phi_k^{-1}$ is a holomorphic affine
transformation on $\mathbb{C}^n$ whenever $U_i\cap U_k$ is not
empty, then $\{(U_i,\,\phi_i);\, i\in I\}$ is called a complex
affine coordinate cover on $M$ and it defines a holomorphic affine structure on $M$.
\end{definition}

\begin{theorem}\label{affine structure} 
The holomorphic map $\Psi:\, \T\to A\cap D\subset A\simeq \C^{N}$ is nondegenerate, therefore defines a global holomorphic affine structure on $\T$.
\end{theorem}
\begin{proof}
Let $\mathcal{H}$ denote the Hodge subbundle
$\text{Hom}({F}^s, {F}^{s-1}/{F}^s)$. Recall that we have used the same notations for Hodge bundles on $D$ and $\T$.
Then by local Torelli theorem
for Calabi--Yau type manifolds, we have the isomorphism
\begin{align}\label{local assumption''}
d\P:\, \text{T}^{1,0}\T\stackrel{\sim}{\longrightarrow} \mathcal{H}.
\end{align}

Let $\mathcal{H}_{A}$ be the restriction of $\mathcal{H}$ to $A\cap D$. Since $\mathcal{H}_{A}$ is a Hodge subbundle, we have the natural commutative diagram
\begin{align*}
\xymatrix{
\mathcal{H}_{A}|_{o} \ar[rr]^-{d\exp(X)}_{\simeq} \ar[d]^-{\simeq} && \mathcal{H}_{A}|_{s}\ar[d]\\
\text{T}_{o}^{1,0}(A\cap D) \ar[rr]^-{d\exp(X)}_{\simeq} && \text{T}_{s}^{1,0}(A\cap D)}
\end{align*}
at any point $s=\exp(X)$ in $A\cap D$ with $X\in \mathfrak{a}$. Here $d\exp(X)$ denote the tangent map of the left translation by $\exp(X)$. Here we have used the fact that the Hodge subbundle $\mathcal{H}_{A}$ is a homogeneous subbundle of the tangent bundle $\text{T}^{1,0}D$ which is also homogeneous. See \cite {Griffiths63} for the basic properties of homogeneous vector bundles.

Hence we have the isomorphism of tangent spaces $\text{T}_{s}^{1,0}(A\cap D)\simeq \mathcal{H}_{A}|_{s}$ for any $s\in A\cap D$, which implies that $$\text{T}^{1,0}(A\cap D)\simeq
 \mathcal{H}_{A}$$ as holomorphic bundles on $A\cap D$.
 Note that the tangent map of the projection map $P:\, N_+\cap D \to A\cap D$ maps the
tangent bundle of $D$,
$$\text{T}^{1,0}D\simeq \bigoplus_{k=n-s}^{s}\bigoplus_{l=1}^{k}\text{Hom}({F}^k/{F}^{k+1}, {F}^{k-l}/{F}^{k-l+1})$$
onto its subbundle $\text{T}^{1,0}(A\cap D)\simeq \mathcal{H}_{A}$. Therefore the tangent map $$d\Psi=dP\circ d\P:\, \text{T}^{1,0}\T \to \text{T}^{1,0}(A\cap D)$$
at any point $q\in \T$ is explicitly given by
\begin{eqnarray*}
d\Psi_{q}:\, \text{T}_{q}^{1,0}\T &\to& \bigoplus_{k=n-s+1}^{s}\text{Hom}({F}_{q}^k/{F}_{q}^{k+1}, {F}_{q}^{k-1}/{F}_{q}^{k}) \\
&\to& \mathcal{H}_{A}|_{\Psi(q)}\simeq \text{T}_{\Psi(q)}^{1,0}(A\cap D)
\end{eqnarray*}
which is an isomorphism by the local Torelli theorem for Calabi--Yau type manifolds as in (\ref{local assumption''}). Therefore we have proved that the holomorphic map $$\Psi:\, \T\to A\cap D$$ is nondegenerate which induces an
affine structure on $\T$ from $A$.
\end{proof}

\begin{remark}This affine structure on ${\mathcal{T}}$ depends on the choice of the base point $p$. Affine structures on ${\mathcal{T}}$ defined in this ways by fixing different base point may not be compatible with each other. \end{remark}

\subsection{Affine structure on the Hodge metric completion space}\label{affine on TH}

First recall diagram \eqref{cover maps}:
\begin{align} \xymatrix{\mathcal{T}\ar[r]^{i_{m}}\ar[d]^{\pi_m}&\mathcal{T}^H_{m}\ar[d]^{\pi_{m}^H}\ar[r]^{{\Phi}^{H}_{m}}&D\ar[d]^{\pi_D}\\
\mathcal{Z}_m\ar[r]^{i}&\mathcal{Z}^H_{m}\ar[r]^{{\Phi}_{{\mathcal{Z}_m}}^H}&D/\Gamma.
}
\end{align}

By Corollary \ref{extended-boundedness},
we can define the holomorphic map $$\Psi_m^H:\, \TT\to A\cap D$$ by composing 
the extended period map $$\Phi_m^H:\,\TT\to N_+\cap D$$ and the projection map $$P:\, N_+\cap D\to A\cap D.$$

We also define the holomorphic map $\Psi_m:\, \T_m\to A\cap D$ by restricting $\Psi_m=\Psi_m^H|_{\T_m}$, which is equivalently $\Psi_m=P\circ \Phi_m$. Recall that $\T_{m}\subset \TT$ is an open complex submanifold of $\TT$ with $\text{codim}_{\C}(\TT\setminus \T_{m})\ge 1$, and $$\T_m=i_m(\T)=(\pi_m^H)^{-1}(\sZ).$$ We can choose a small neighborhood $U$ of any point in $\T_{m}$ such that 
$$\pi_m^H:\, U \to V=\pi_m^H(U)\subset \sZ,$$ 
is a biholomorphic map. We can shrink $U$ and $V$ simultaneously such that $\pi_{m}^{-1}(V)=\cup_{\alpha}W_{\alpha}$ and $\pi_{m} :\, W_{\alpha}\to V$ is also biholomorphic.
Choose any $W_{\alpha}$ and denote it by $W=W_{\alpha}$. Then $i_{m} :\, W \to U$ is a bi-holomorphic map.
Since $$\Psi=\Psi_{m}^{H}\circ i_{m}=\Psi_{m}\circ i_{m},$$ we have $\Psi|_{W}=\Psi_{m}|_{U}\circ i_{m}|_{W}$. Theorem \ref{affine structure} implies that $\Psi|_{W}$ is biholomorphic onto its image, if we shrink $W$, $V$ and $U$ again.
Therefore $$\Psi_{m}|_{U} :\, U\to A\cap D$$ is biholomorphic onto its image. 

In conclusion, we have proved the following lemma.

\begin{lemma}\label{affine on Tm}
The holomorphic map $\Psi_m:\, \T_m\to A\cap D$ is a local embedding. In particular, $\Psi_m$ defines a global holomorphic affine structure on $\T_m$. 
\end{lemma}

Next we prove that the affine structure $\Psi_m:\, \T_m\to A\cap D$ can be extended to a global affine structure on $\TT$, which is precisely induced by the map $\Psi_m^H:\, \TT\to A\cap D$ which will be proved to be nondegenerate.

\begin{definition}
Let $M$ be a complex manifold and $N\subset M$ a cloesd subset. Let $E_0\to M\setminus N$ be a holomorphic vector bundle. 
Then $E_0$ is called holomorphically trivial along $N$, if for any point $x\in N$, there exists an open neighborhood $U$ of $x$ in $M$ such that $E_0|_{U\setminus N}$ is holomorphically trivial.
\end{definition}

\begin{lemma}\label{unique-extension}
The holomorphic vector bundle $E_0\to M\setminus N$ can be extended to a unique holomorphic vector bundle $E\to M$ such that $E|_{M\setminus N}=E_0$, if and only if $E_0$ is holomorphically trivial along $N$.
\end{lemma}
\begin{proof}
The proof is taken from Proposition 4.4 of \cite{Mckay}. We include it here for the convenience of the readers.

If such an extension $E\to M$ exists uniquely, then we can take a neighborhood $U$ of $x$ such that $E|_{U}$ is trivial, and hence $E_0|_{U\setminus N}$ is trivial.

Conversely, suppose that for any point $x\in N$, there exists an open neighborhood $U_x$ of $x$ in $M$ such that $E_0|_{U_x\setminus N}$ is trivial. Then we can cover $N$ by the open sets $\{U_x:\, x\in N\}$. For any $y\in M\setminus N$, we can choose an open neighborhood $V_y$ of $y$ such that $V_y\cap N=\emptyset$.
Then we can cover $M$ by the open sets in
$$\mathcal{C}=\{U_x:\, x\in N\}\cup \{V_y:\, y\in M\setminus N\}.$$
We denote $\psi_{V}^{U}$ be the transition function of $E_0$ on $U\cap V$ if $U\cap V\neq \emptyset$, where $U,V$ are open subsets of $M\setminus N$ and $E_0$ is trivial on $U$ and $V$.

If $V_y\cap V_{y'}\neq \emptyset$, then $\psi_{V_{y'}}^{V_y}$ is well-defined, since $V_y\cap V_{y'}\subset M\setminus N$. Then we define the transition function $\phi_{V_{y'}}^{V_y}=\psi_{V_{y'}}^{V_y}$.
For the case of $U_x$ and $V_y$, we can use the same argument to get the transition functions $\phi_{V_{y}}^{U_x}=\psi_{V_{y}}^{U_x}$ and
$\phi^{V_{y}}_{U_x}=\psi^{V_{y}}_{U_x}$.

The remaining case is that $U_x\cap U_{x'}\neq \emptyset$, where $x ,x' \in N$. Then the transition function $\psi_{U_{x'}\setminus N}^{U_x\setminus N}$ is well-defined and we define the transition function $\phi_{U_{x'}}^{U_x}=\psi_{U_{x'}\setminus N}^{U_x\setminus N}$. Clearly the new defined transition functions $\phi_{V}^U$, $U,V\in \mathcal{C}$ satisfy that
$$\phi_{V}^U \circ \phi_{U}^V=\text{id},\text{ and }\phi_{V}^U \circ \phi_{W}^V \circ \phi_{U}^W=\text{id}.$$
Therefore these transition functions determine a unique holomorphic vector bundle $E\to M$ such that $E|_{M\setminus N}=E_0$.
\end{proof}

On the compact dual $\check{D}$ of the period domain, we have the Hodge bundles
\begin{equation}\label{Hodge-bundle}
\bigoplus_{k=n-s+1}^s \text{Hom}(F^k/F^{k+1},F^{k-1}/F^k),
\end{equation}
in which the differentials of the period maps $\Phi$, $\Phi_m^H$ and $\Phi_m$ take values. 
From the definition of Calabi--Yau type manifolds and Proposition \ref{local torelli}, we know that the images of the differentials of the period maps are projected isomorphically into the subbundle $\text{Hom}(F^s,F^{s-1}/F^s)$.  

Let us denote the restriction of the Hodge bundle by 
$$\mathcal{H}_A=\text{Hom}(F^s,F^{s-1}/F^s)|_{A}.$$

Theorem \ref{affine structure} and Lemma \ref{affine on Tm} give the natural isomorphisms of the holomorphic vector bundles over $\T$ and $\T_m$ respectively
\begin{eqnarray*}
d\Psi:\,\text{T}^{1,0}\T \simeq \Psi^*\mathcal{H}_A,\\
d\Psi_{m}:\,\text{T}^{1,0}\T_m\simeq \Psi_m^*\mathcal{H}_A.
\end{eqnarray*}

We remark that the period map $\Phi_{\Z}^{H}:\, \ZZ\to D/\Gamma$ can be lifted to the universal cover to get  $\Phi_{m}^{H}: \, \TT\to D$ due to the fact that  around any point in $\ZZ\setminus \Z$, the Picard-Lefschetz transformation, or equivalently, the monodromy is trivial, which is proved in Lemma \ref{cidim}. Hence the Hodge bundle $(\Psi_m^H)^*\mathcal{H}_A$ is the natural extension of $\Psi_m^*\mathcal{H}_A$ over $\TT$. More precisely we have the following lemma.

\begin{lemma}\label{Hodge extension}
The isomorphism $\text{T}^{1,0}\T_m\simeq \Psi_m^*\mathcal{H}_A$ of holomorphic vector bundles over $\T_m$ has a unique extension to an isomorphism of holomorphic vector bundles over $\T^H_m$ with
$$\text{T}^{1,0}\TT\simeq (\Psi_m^H)^*\mathcal{H}_A.$$
\end{lemma}
\begin{proof} We give another proof by using the extension of the period map.

For any $p\in \TT\setminus \T_{m}$, we can choose an open neighborhood $U_{p}\subset \TT$ such that $\Psi_{m}^{H}(U_{p})$ is contained in an open neighborhood $W\subset A\cap D$ along which $\mathcal{H}_A$ is trivial. Let
$$\bar{o}=\Psi_{m}^{H}(p)\in W \text{ and } H_{A}=\mathcal{H}_A|_{\bar{o}}.$$ Then we have $$\mathcal{H}_A|_{W}\simeq W\times H_{A}$$ and
$$(\Psi_m^H)^*\mathcal{H}_A|_{U_{p}}\simeq(\Psi_m^H)^*(\mathcal{H}_A|_{W})|_{U_{p}}\simeq(\Psi_m^H)^*(W\times H_{A})|_{U_{p}}\simeq U_{p}\times H_{A}.$$
Let $U=U_{p}\cap \T_{m}$.
Then the restriction to $U$ of the pull-back Hodge bundle is $$\Psi_m^*\mathcal{H}_A|_{U}=((\Psi_m^H)^*\mathcal{H}_A|_{U_{p}})|_{U}\simeq U\times H_{A},$$ which means that $\Psi_m^*\mathcal{H}_A$ is holomorphically trivial along $\TT \setminus \T_{m}$.

Therefore by Lemma \ref{unique-extension}, the Hodge bundle $\Psi_m^*\mathcal{H}_A$ over $\T_m$ has a unique extension over $\TT$, which is
$(\Psi_m^H)^*\mathcal{H}_A$ by continuity. Then we apply Lemma \ref{unique-extension} again to conclude that the holomorphic tangent bundle $\text{T}^{1,0}\T_m$,
which is isomorphic to $\Psi_m^*\mathcal{H}_A$, is holomorphically trivial along $\TT\setminus \T_m$. Hence $\text{T}^{1,0}\T_m$ has a unique extension which is
obviously $\text{T}^{1,0}\TT$. By the uniqueness of such extension we conclude that
$$\text{T}^{1,0}\TT\simeq (\Psi_m^H)^*\mathcal{H}_A.$$
\end{proof}

With the same notation $\mathcal{H}=\text{Hom}(F^s,F^{s-1}/F^s)$ to denote the corresponding Hodge bundles on $\T_m $ and $\TT$,
the above lemma simply tells us that the isomorphism of bundles $\text{T}^{1,0}\T_{m}\simeq \mathcal{H}$ on $\T_{m}$ extends to isomorphism on $\TT$, $\text{T}^{1,0}\TT\simeq \mathcal{H}$ due to the trivial monodromy around   around $\ZZ\setminus \Z$.

With the above preparations, we are ready to prove the following theorem, which is crucial to the proofs of our main theorems.

\begin{thm}\label{THmaffine}
The holomorphic map $$\Psi_m^H:\, \TT\to A\cap D$$ is nondegenerate. Hence $\Psi_m^H$ defines a global affine structure on $\TT$.
\end{thm}
\begin{proof}
Without confusion we will use $\mathcal{H}_A$ to denote the tangent bundle of $A\cap D$, since they are naturally isomorphic.

For any point $q\in \TT\setminus \T_m$, we can choose a neighborhood $U_q$ of $q$ in $\TT$ such that $$\text{T}^{1,0}\TT\simeq (\Psi_m^H)^*\mathcal{H}_A$$ is trivial on $U_q$. Moreover, we can shrink $U_q$ so that $\Psi_m^H(U_q)\subseteq V$, where $V\subset A\cap D$ on which $\mathcal{H}_A$ is trivial.
We choose a basis $$\{\Lambda_1, \cdots ,\Lambda_N \}$$ of $\mathcal{H}_A|_{V}$, which is parallel with respect to the natural affine structure on $A\cap D$.
Let $$\mu_i=((\Psi_m^H)^*\Lambda_i)|_{U_{q}},\, 1\le i\le N.$$ Then $\{\mu_1,\cdots,\mu_n\}$ is a basis of $$\text{T}^{1,0}\TT|_{U_{q}}\simeq (\Psi_m^H)^*\mathcal{H}_A|_{U_{q}}.$$

Let $U=U_q\cap \T_m$ and $q_k\in U$ such that $q_k\longrightarrow q$ as $k\longrightarrow \infty$.
Since $$\Psi_m:\, \T_m\to A\cap D$$ defines a global affine structure on $\T_m$, for any $k\ge1$ we have
$$(d\Psi_m)(\mu_i|_{q_k})=\sum_{j} A_{ij}(q_k)\Lambda_j|_{o_k},$$
for some nonsingular matrix $(A_{ij}(q_k))_{1\le i,j\le N}$, where $o_k=\Psi_m(q_k)$.

Since the affine structure on $\T_m$ is induced from that of $A\cap D$, both the bases
$$\{\mu_1|_{U},\cdots,\mu_N|_{U} \}\text{ and }\{\Lambda_1, \cdots ,\Lambda_N \}$$
are parallel with respect to the affine structures on $\T_m$ and $A\cap D$ respectively.
Hence we have that the matrix  $(A_{ij}(q_k))=(A_{ij})$ is constant matrix.
Therefore
\begin{eqnarray*}
(d\Psi^H_m)(\mu_i|_{q})&=&\lim_{k\to \infty}(d\Psi^H_m)(\mu_i|_{q_k})\\
&=&\lim_{k\to \infty}(d\Psi_m)(\mu_i|_{q_k})\\
&=&\lim_{k\to \infty}\sum_{j} A_{ij}\Lambda_j|_{o_k}\\
&=&\sum_{j} A_{ij}\Lambda_j|_{o_{_\infty}},
\end{eqnarray*}
where $o_\infty =\Psi^H_m(q)$.
Since the matrix $(A_{ij})_{1\le i,j\le N}$ is nonsingular, the tangent map $d\Psi^H_m$ is nondegenerate.
\end{proof}

Note that in the proof of Theorem \ref{THmaffine}, we used substantially the identification $d\Phi:\, \text{T}^{1,0}\T_m\simeq \Psi_m^*\mathcal{H}_A$ on $\T_m$,
which is explicitly given by the contraction map $\kappa(v)\lrcorner$ at a point $(q,v) \in \text{T}_q \T_m$. Here recall that $$\kappa:\,T^{1,0}_q\mathcal{T}_{m}\to {H}^{0,1}(M_q,T^{1,0}M_q)$$ is the Kodaira-Spencer map for any $q\in \T_m$. From the explicit expression, one can see that this identification of bundles $\text{T}^{1,0}\T_m\simeq \Psi_m^*\mathcal{H}_A$ on $\T_m$ only depends on the tangent vector $v$.

Moreover, since $\Psi_m^*\mathcal{H}_A$ extends trivially to $(\Psi_m^H)^*\mathcal{H}_A$ on $\TT$ due to the trivial monodromy along $\ZZ\setminus \Z$,
it gives the identification of the tangent bundle $\text{T}^{1,0}\TT$ with the Hodge subbundle $(\Psi_m^H)^*\mathcal{H}_A$ on $\TT$ by Lemma \ref{Hodge extension}.
From this special feature of the period map, one can see that the identification of the tangent bundle of the  space $\T_m$ with the Hodge subbundle is compatible with both the affine structures on $\T_m$ and $A$, as well as the affine map $\Psi_m$.

Now we recall a lemma due to Griffiths-Wolf, which is proved as Corollary 2 in \cite{GW}.
\begin{lemma}\label{covering-lemma}
Let $f:\, X\to Y$ be a local diffeomorphism of connected Riemannian manifolds. Assume that $X$ is complete for the induced metric. Then $f(X)=Y$, $f$ is a covering map and $Y$ is complete.
\end{lemma}

This lemma, together with Theorem \ref{THmaffine} proved above, gives the following corollary.


\begin{corollary}\label{cover-TmH}
The holomorphic map $\Psi_m^H:\, \TT\to A\cap D$ is a universal covering map, and the image $\Psi_{m}^{H}(\TT)=A\cap D$ is complete with respect to the Hodge metric.
\end{corollary}




It is important to note that the flat connections which correspond to the global holomorphic affine structures on $\mathcal{T}$, on $\mathcal{T}_m$ or on $\mathcal{T}_{m}^H$ are in general not compatible to the corresponding Hodge metrics on them.

\subsection{Injectivity of the period map on the Hodge metric completion space}\label{injective}

The main purpose of this section is to prove Theorem \ref{injectivityofPhiH} stated below. We will give two proofs of this theorem.
The first proof is to show directly that $A\cap D$ is simply connected, which together with Corollary \ref{cover-TmH} implies the theorem. 

The second proof uses the affine structures on $\T^H_m$ and $A\cap D$ in a more substantial way. Each proof reflects different geometric structures of the period domain and the period map which will be useful for further study, so we include both proofs in this section.


\begin{theorem}\label{injectivityofPhiH}For any $m\geq 3$, the holomorphic map $$\Psi^H_{m}:\, \mathcal{T}^H_{m}\to A\cap D$$ is an injection and hence a biholomorphic map.
\end{theorem}
\begin{proof} As explicitly described in the proof of Lemma \ref{lemma of locallybounded}, the natural projection $\pi:\,D\to G_\mathbb{R}/K$, when restricted
to the underlying real manifold of $\mathrm{exp}(\mathfrak{p}_+)
\cap D$, is given by the diffeomorphism
\begin{equation}\label{pi+}
\pi_{+}:\, \mathrm{exp}(\mathfrak{p}_+) \cap D\longrightarrow \mathrm{exp}(\mathfrak{p}_0) \stackrel{\simeq}{\longrightarrow} G_\mathbb{R}/K,
\end{equation}
which is defined by mapping $\exp (Y) \bar{o}$ to $\exp (X) \bar{o}$ for $Y\in \mathfrak{p}_+$ and $X\in \mathfrak{p}_0$ with the relation that
$$X =T_0( Y +\tau_0(Y))$$ for some real number $T_{0}$.
Here $\bar{o}$ is the base point in $\mathrm{exp}(\mathfrak{p}_+) \cap D$.

By Griffiths transversality, one has
$$\mathfrak{a}\subset\mathfrak{g}^{-1,1}\subset \mathfrak{p}_{+}\text{ and }\mathfrak{a}_0=\mathfrak{a} +\tau_0(\mathfrak{a})\subset \mathfrak{p}_0.$$
Then $A\cap D$ is a submanifold of $\text{exp}(\mathfrak{p}_+) \cap D$, and the diffeomorphism $\pi_{+}$ maps $A\cap D\subseteq \text{exp}(\mathfrak{p}_{+}) \cap D$ diffeomorphically to its image $\exp(\mathfrak{a}_0)$ inside $G_{\mathbb R}/K$, from which one has the diffeomorphism $$A\cap D\simeq \text{exp} (\mathfrak{a}_0)$$ induced by $\pi_+$. Since $\text{exp} (\mathfrak{a}_0)$ is simply connected, one concludes that $A \cap D$ is also simply connected.



Now since $\T^H_m$ is simply connected and $\Psi_m^H:\, \TT\to A\cap D$ is a covering map, we conclude that $\Psi^H_m$ must be a biholomorphic map.
\end{proof}

For the second proof, we will first prove the following elementary lemma, in which we mainly use the completeness with the Hodge metric on $\T^H_m$, the boundedness of the image of the extended period map $$\PP:\, \TT \to N_{+}\cap D$$ which is also an isometry in the Hodge metric,  the holomorphic affine structure on $\mathcal{T}^H_{m}$, and the affineness of $\Psi^H_{m}$. We remark that as $\mathcal{T}^H_{m}$ is a complex affine manifold, we have the notion of straight lines in it with respect to the affine structure.

\begin{lemma}\label{straightline} For any point in $\mathcal{T}^H_{m},$ there is a straight line segment in $\mathcal{T}^H_{m}$ connecting it to the base point $p$.
\end{lemma}
\begin{proof}The proof uses crucially the facts that the map $$\Psi_m^H:\, \T_m^H\to A\cap D$$ is local isometry with the Hodge metrics on $\T_m^H$ and $A\cap D$, and that it is also
an affine map with the induced affine structure on $\T_m^H$.

Let $q$ be any point in $\T_m^H$. As $\T_m^H$ is connected and complete with the Hodge metric, by Hopf-Rinow theorem, there exists a geodesic $\gamma$ in $\T_m^H$ connecting the base point $p$ to $q$. Since
$\Psi^{H}$ is a local isometry with the Hodge metric, $\tilde{\gamma}=\Psi^{H}(\gamma)$ is also a geodesic in $A\cap D$.

By Griffiths transversality, we have that $\mathfrak{a}\subset\mathfrak{g}^{-1,1}\subset \mathfrak{p}_{+}$ and $$\mathfrak{a}_0=\mathfrak{a} +\tau_0(\mathfrak{a})\subset \mathfrak{p}_0.$$
On the other hand,  from equation \eqref{pi+}, we know that any geodesic starting from the base point $\tilde{p}=\Psi^{H}(p)$
in $A\cap D$ is of the form $\exp(tX)\tilde{p}$ with $X\in \mathfrak{a}_{0}$ and $t\in \mathbb{R}$.  Also from the explicit computation in Lemma \ref{abounded} we have the relation
$$\exp(tX)\tilde{p}=\exp(T(t)Y)\tilde{p}$$ with $Y\in \mathfrak{a}$ satisfying $X=T_0(Y+\tau_{0}(Y))$ for some  $T_0\in \mathbb{R}$, and $T(t)$ a smooth real valued monotone function of $t$.

 Note that the affine structure on $A\cap D$ is induced from the affine structure on $\mathfrak{a}$ by the exponential map $$\text{exp}:\, \mathfrak{a}\to A,$$
therefore $\tilde{\gamma}=\exp(T(t)Y)\tilde{p}$ corresponds to a straight line with respect to the affine structure on $A\cap D$. Hence $\gamma$ is also a
straight line in $\T^H$ with respect to the induced affine structure, since $\Psi^H$ is an affine map.
\end{proof}

We remark that the above proof uses substantially the fact that the straight line segment connects the base point to any other point in $\TT$.

\begin{proof}[Second Proof of Theorem \ref{injectivityofPhiH}] We will prove by contradiction.
Let $q_{1}, q_{2}\in \mathcal{T}^H_{m}$ be two different points.
Suppose that $\Psi_{m}^{H}(q_{1})=\Psi_{m}^{H}(q_{2})$.

If one of $q_{1}, q_{2}$, say $q_{1}$, happens to be the base point $p$,
then Lemma \ref{straightline} implies that there is a straight line segment $l\subseteq \mathcal{T}^H_{m}$ connecting $p$ and $q_{2}$.
Since $\Psi_{m}^{H}$ is an affine map and is locally biholomorphic by local Torelli for Calabi--Yau type manifolds, the straight line segment $l$ is mapped to a straight line segment $\tilde{l}=\Psi_{m}^{H}(l)$ in $$A\cap D \subset A\simeq \C^{N}.$$ But $\Psi_{m}^{H}(p)=\Psi^{H}(q_{2})$, $\tilde{l}$ is also a cycle in $A\simeq \C^{N}$, which is a contradiction.

Now if both $q_{1}$ and $q_{2}$ are different from the base point $p$, then by Lemma \ref{straightline}, there exist two different straight line segments $l_{1},l_{2}\subseteq \mathcal{T}^H_{m}$ connecting $p$ to $q_{1}$ and $q_{2}$ respectively.
Since $$\Psi_{m}^{H}(q_{1})=\Psi_{m}^{H}(q_{2}),$$ the two straight line segments $\tilde{l_{i}}=\Psi_{m}^{H}(l_{i})$, $i=1,2$ in $A\cap D$ must coincide, since they have the same end points.  This contradicts to the fact that $\Psi_{m}^{H}$ is locally biholomorphic at the base point $p$.
\end{proof}

%

Since $\Psi^H_m = P\circ \P^H_m$, we also have the following corollary.

\begin{corollary}\label{injective PhiHm}The holomorphic map $\Phi^H_{m}: \,\mathcal{T}^H_{m}\rightarrow N_+\cap D$ is also an injection.
\end{corollary}

\begin{remark}\label{def tau affine}
We remark that the affine structure $\Psi :\,\TT \to A\cap D$ can also be constructed explicitly as follows. 
With the adapted basis at the base point $p\in T$, we can also see $$(d\Phi)_{p}(\text{T}^{1,0}_{p}\T)=\mathfrak{a}\subset \mathfrak{n}_{+}$$ as a block lower triangle matrix whose diagonal elements are zero. Moreover by the definition of Calabi--Yau type manifolds, the differential map
$$(d\Phi)_{p}:\, \text{T}^{1,0}_{p}\T \to \text{Hom}(F_{p}^{s},F_{p}^{s-1}/F_{p}^{s})$$
is an isomorphism.
Hence we can conclude that $\mathfrak{a}$ is isomorphic to its $(1,0)$-block as vector spaces, see \eqref{block} for the definition.

 Let $(\tau_{1},\cdots ,\tau_{N})^{T}$ be the $(1,0)$-block of $\mathfrak{a}$. 
Since the affine structure on $A$ is induced by $\exp:\, \mathfrak{a}\to A$
which is an isomorphism, $(\tau_{1},\cdots ,\tau_{N})^{T}$ also defines a global affine structure on A, and hence a global affine structure on $\T_{m}^{H}$, and we denote it by
\begin{equation}\label{tau affine}
\tau_{m}^{H}:\, \TT \to \C^{N},\quad  q\mapsto (\tau_{1}(q),\cdots ,\tau_{N}(q)).
\end{equation}
Note that from linear algebra, it is easy to see that the $(1,0)$-block of $A=\exp(\mathfrak{a})$ is still $(\tau_{1},\cdots ,\tau_{N})^{T}$. Hence the affine map \eqref{tau affine} can be constructed as the $(1,0)$-block of the image of the period map, which are realized as matrices in $N_{+}$.
To be precise, let $P^{1,0}:\, N_{+}\to \C^{N}$ to be the projection of the matrices in $N_{+}$ onto their $(1,0)$-blocks. Then the affine map \eqref{tau affine} is
$$\tau_{m}^{H}=P^{1,0}\circ \PP : \, \TT \to \C^{N}\simeq H^{s-1,n-s+1}(M_{p}).$$
From Theorem \ref{injectivityofPhiH}, we know that $\tau_{m}^{H}$ is injective, and hence it defines another embedding of $\TT$ into $\C^{N}$.

\end{remark}
\section{Global Torelli Theorem on the Torelli space}\label{global}
In this section, we prove the global Torelli theorem on the Torelli space of Calabi--Yau type manifolds.
\begin{theorem}[Global Torelli]\label{global torelli} The period map $\Phi': \, \mathcal{T}'\rightarrow D$ from the Torelli space $\T'$ is injective. 
\end{theorem}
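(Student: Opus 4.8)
The plan is to reduce injectivity of $\Phi$ to injectivity of the developing map $\tau^H_m:\mathcal{T}^H_m\to\mathbb{C}^N$ of the holomorphic affine structure produced in Theorem \ref{THmaffine}, and then to assemble the completion space $\mathcal{T}^H$. First I would fix an $m\geq 3$ and collect the available facts: $\tau^H_m$ is a local biholomorphism (Theorem \ref{THmaffine}), its image is bounded in $\mathbb{C}^N$ (Proposition \ref{Riemannextension}), $\mathcal{T}^H_m$ is connected, simply connected and complete for the Hodge metric, and one has the factorizations $\Phi=\Phi^H_m\circ i_m$ and $\tau=\tau^H_m\circ i_m$ with $i_m:\mathcal{T}\to\mathcal{T}^H_m$ itself a local biholomorphism.

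The key step is to show $\tau^H_m$ is injective. I would prove that $\tau^H_m$ is a covering map onto its image $A:=\tau^H_m(\mathcal{T}^H_m)$ and that $A$ is simply connected, so that $\tau^H_m$ is a biholomorphism onto $A$. Since the affine structure has trivial holonomy (the transition functions of $\tau^H_m$ are all the identity), proving the covering property amounts to the unique path lifting property: given a path in $A$, lift it maximally to $\mathcal{T}^H_m$ and show the lift cannot terminate in finite parameter time. For this one compares the Hodge metric on $\mathcal{T}^H_m$ with the Euclidean metric pulled back by $\tau^H_m$ along affine segments --- using boundedness in the manner of Theorem \ref{locallybounded} --- so that a terminating lift would have finite Hodge length and hence, by completeness of the Hodge metric, would actually extend, a contradiction. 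For the simple connectivity of $A$ I would exploit the flat, trivial-holonomy affine structure together with completeness to show $A$ is convex (or at least has trivial fundamental group). This metric control along the lifted affine lines --- ruling out that a lift runs off to Hodge-infinity while its image stays in a compact part of $A$ --- is the step I expect to be the main obstacle, and is where the completeness of $\mathcal{T}^H_m$ is used essentially.

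Granting injectivity of $\tau^H_m$, the rest is formal. By Lemma \ref{PsiHm injective}, $\Phi^H_m:\mathcal{T}^H_m\to N_+\cap D$ is injective; from $\tau=\tau^H_m\circ i_m$ and injectivity of $\tau^H_m$, the map $i_m$ is injective; hence $\Phi=\Phi^H_m\circ i_m$ is a composition of injections, so $\Phi:\mathcal{T}\to D$ is injective, which is the theorem. Finally I would add the invariant packaging: by the versality and simple-connectedness arguments used in Section \ref{moduli and teichmuller} for varying level structures, the spaces $\mathcal{T}^H_m$ are pairwise biholomorphic for $m\geq 3$, so one may set $\mathcal{T}^H:=\mathcal{T}^H_m$, $i_\mathcal{T}:=i_m$, $\Phi^H:=\Phi^H_m$, $\tau^H:=\tau^H_m$; then $\tau^H$ realizes $\mathcal{T}^H$ as a bounded complex affine domain in $\mathbb{C}^N$, $\Phi^H:\mathcal{T}^H\to D$ is a holomorphic injection, and $i_\mathcal{T}:\mathcal{T}\to\mathcal{T}^H$ is an embedding onto a dense open subset whose complement has complex codimension at least one (Proposition \ref{opend}), identifying $\mathcal{T}^H$ with the Hodge metric completion of $\mathcal{T}$ and recovering $\Phi=\Phi^H\circ i_\mathcal{T}$.
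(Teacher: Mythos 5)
Your overall architecture is the paper's: everything reduces to injectivity of $\tau^H_{_m}$, from which $\Phi^H_{_m}$ is injective by Lemma \ref{PsiHm injective}, and then one composes. However, two of your steps have genuine gaps.

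First, your route to the key point --- injectivity of $\tau^H_{_m}$ via ``covering map onto a simply connected image'' --- does not close. (a) The path-lifting step needs the Hodge length of a maximal partial lift to be controlled by the Euclidean length of its (compact) image path, i.e.\ an inequality $\|v\|_{h}\leq C\,\|(\tau^H_{_m})_*v\|_{E}$ along the lift with a \emph{uniform} constant. Boundedness of the image (Theorem \ref{locallybounded}, Proposition \ref{Riemannextension}) gives no such bound: the $\tau^H_{_m}$-preimage of the compact arc need not be compact (that is exactly what is at issue), and a complete metric on a bounded domain is not dominated by the Euclidean metric near the boundary, so the constant can degenerate precisely where the lift threatens to escape. (b) Even granting the covering property, since $\mathcal{T}^H_{_m}$ is simply connected you would be exhibiting the universal cover of $A=\tau^H_{_m}(\mathcal{T}^H_{_m})$, so you still must prove $\pi_1(A)=1$; the proposed ``convexity from flatness and completeness'' is exactly the nontrivial content, and it is equivalent to the paper's claim (proved in Lemma \ref{lemmaofinj} by an open-and-closed argument on affine geodesics, using completeness, boundedness of the $T_i$, and continuous dependence on initial data) that any two points of $\mathcal{T}^H_{_m}$ are joined by a straight line of the affine structure. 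Once that claim is in hand the covering/simple-connectivity detour is unnecessary: if $\tau^H_{_m}(p)=\tau^H_{_m}(q)$, the affine line joining $p$ and $q$ is mapped by the affine map $\tau^H_{_m}$ to a constant, contradicting local injectivity (Corollary \ref{affineonPsim}).

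Second, and independently, the inference ``from $\tau=\tau^H_{_m}\circ i_m$ and injectivity of $\tau^H_{_m}$, the map $i_m$ is injective'' is a non sequitur: injectivity of the outer factor of a composition says nothing about the inner factor (it would follow from injectivity of $\tau$, which is equivalent to the statement being proved). Injectivity of $i_m=i_{\mathcal{T}}$ is a separate substantive step, Proposition \ref{injectivity of i}: one shows $i_{\mathcal{T}}:\mathcal{T}\to\mathcal{T}_0$ is a covering map and that $\pi_1(\mathcal{T}_0)$ is trivial by comparing the inverse system of fundamental groups $\pi_1(\mathcal{Z}_{m_k})$ over divisibility-ordered levels with the simple connectedness of $\mathcal{T}$. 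Your closing paragraph asserts this embedding as ``packaging'' but supplies no argument for it.
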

This theorem follows from two steps. 



First, for any two integers $m, m'\geq 3$, we know from Theorem \ref{injectivityofPhiH} that both $\mathcal{T}^H_{m}$ and $\mathcal{T}^H_{m'}$ are biholomorphic to $A\cap D$. Hence we have the following proposition.
\begin{proposition}\label{indepofm}
The complete complex manifolds $\mathcal{T}^H_m$ and $\mathcal{T}^H_{m'}$ are biholomorphic to each other.
\end{proposition}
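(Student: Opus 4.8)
The plan is to produce a biholomorphism between $\mathcal{T}^H_{_m}$ and $\mathcal{T}^H_{m'}$ by passing through a common model, namely the Teichm\"uller space $\mathcal{T}$ and its images $\mathcal{T}_m=i_m(\mathcal{T})$, $\mathcal{T}_{m'}=i_{m'}(\mathcal{T})$. Recall from Corollary \ref{universal cover} that $\mathcal{T}$ is the universal cover of every $\mathcal{Z}_m$, so $\pi_m$ and $\pi_{m'}$ are universal covering maps; composing, $\pi_{m'}\circ(\pi_m)^{-1}$ is well-defined as a deck-type identification and gives a biholomorphism $g:\,\mathcal{Z}_m\to\mathcal{Z}_{m'}$ off their natural identifications, or more precisely the two constructions yield the \emph{same} versal family over $\mathcal{T}$ by the remark following Corollary \ref{universal cover}. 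The first step is therefore to record that $i_m:\,\mathcal{T}\to\mathcal{T}_m$ and $i_{m'}:\,\mathcal{T}\to\mathcal{T}_{m'}$ are both biholomorphisms onto their images: each is a locally biholomorphic holomorphic map (shown in the discussion before Proposition \ref{Riemannextension}), and using Proposition \ref{injectivityofPhiH} together with $\Phi=\Phi^H_{_m}\circ i_m$ — with $\Phi^H_{_m}$ now known to be injective — we get that $i_m$ is globally injective, hence a biholomorphism $\mathcal{T}\xrightarrow{\ \sim\ }\mathcal{T}_m$. Thus $j:=i_{m'}\circ i_m^{-1}:\,\mathcal{T}_m\to\mathcal{T}_{m'}$ is a biholomorphism, and moreover it is compatible with the period maps: $\Phi^H_{m'}\circ j=\Phi^H_{_m}$ on $\mathcal{T}_m$, since both sides equal $\Phi$ after precomposing with $i_m$.

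The second step is to extend $j$ to the Hodge metric completions. Here I would use that $\mathcal{T}^H_{_m}$ is the Hodge metric completion of $\mathcal{T}_m$: by Proposition \ref{opend}, $\mathcal{T}_m=(\pi^H_{_m})^{-1}(\mathcal{Z}_m)$ is dense in $\mathcal{T}^H_{_m}$ with $\mathrm{codim}_{\mathbb C}(\mathcal{T}^H_{_m}\setminus\mathcal{T}_m)\geq 1$, and the Hodge metric on $\mathcal{T}_m$ (pulled back via $\Phi^H_{_m}$, or equivalently via $\Phi$) agrees with the restriction of the complete Hodge metric on $\mathcal{T}^H_{_m}$. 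Since $j:\,\mathcal{T}_m\to\mathcal{T}_{m'}$ intertwines the period maps, it is an isometry for the Hodge metrics; an isometry between metric spaces extends uniquely and continuously to their metric completions, giving a homeomorphism $j^H:\,\mathcal{T}^H_{_m}\to\mathcal{T}^H_{m'}$. To see $j^H$ is holomorphic, I would argue via the affine coordinates: by Theorem \ref{THmaffine}, $\tau^H_{_m}:\,\mathcal{T}^H_{_m}\to\mathbb C^N$ and $\tau^H_{m'}:\,\mathcal{T}^H_{m'}\to\mathbb C^N$ are local embeddings defining affine structures, and by Lemma \ref{lemmaofinj} they are injective. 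Since $\tau=P\circ\Phi$ is the common restriction ($\tau=\tau^H_{_m}\circ i_m=\tau^H_{m'}\circ i_{m'}$), on $\mathcal{T}_m$ we have $\tau^H_{m'}\circ j=\tau^H_{_m}$; both $\tau^H_{_m}$ and $\tau^H_{m'}$ are injective holomorphic maps to $\mathbb C^N$ with open image (being local embeddings between equidimensional manifolds), so $j^H=(\tau^H_{m'})^{-1}\circ\tau^H_{_m}$ on $\mathcal{T}_m$ is the restriction of a biholomorphism between open subsets of $\mathbb C^N$; by density of $\mathcal{T}_m$ and continuity of $j^H$, this identity $\tau^H_{m'}\circ j^H=\tau^H_{_m}$ persists on all of $\mathcal{T}^H_{_m}$, whence $j^H$ is holomorphic with holomorphic inverse constructed symmetrically.

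The third and final step is simply to assemble: $j^H:\,\mathcal{T}^H_{_m}\to\mathcal{T}^H_{m'}$ is a bijective holomorphic map whose inverse (built the same way with the roles of $m$ and $m'$ exchanged) is holomorphic, so it is the desired biholomorphism; as a byproduct $\tau^H_{m'}\circ j^H=\tau^H_{_m}$ and $\Phi^H_{m'}\circ j^H=\Phi^H_{_m}$, which is exactly what is needed afterwards to define $\mathcal{T}^H$, $i_{\mathcal T}$ and $\Phi^H$ independently of $m$. I expect the main obstacle to be the rigorous justification that the isometric extension $j^H$ of the biholomorphism $j$ of the dense open parts is in fact holomorphic across the codimension-$\geq 1$ complement: the clean way around this is precisely the affine-coordinate argument above, which reduces the extension problem to one about biholomorphisms of open sets in $\mathbb C^N$ and uses Theorems \ref{THmaffine} and Lemma \ref{lemmaofinj} to avoid any delicate removable-singularity analysis on $\mathcal{T}^H_{_m}$ itself. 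One should also be slightly careful that the choices of liftings $i_m$, $\Phi^H_{_m}$ were fixed so that $\Phi=\Phi^H_{_m}\circ i_m$ (as arranged in the text before Proposition \ref{Riemannextension}); with those fixed choices all the compatibilities above are literal equalities rather than equalities up to deck transformations.
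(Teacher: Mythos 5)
Your proposal is correct and follows essentially the same route as the paper: both establish a biholomorphism $\mathcal{T}_m\cong\mathcal{T}_{m'}$ (the paper via $\Phi_m(\mathcal{T}_m)=\Phi_{m'}(\mathcal{T}_{m'})=\Phi(\mathcal{T})$ and the injectivity of $\Phi^H_{_m}$, you via $i_{m'}\circ i_m^{-1}$, which is the same map) and then invoke uniqueness of the Hodge metric completion. Your affine-coordinate argument for why the extended homeomorphism of completions is actually holomorphic is a detail the paper leaves implicit, and is a welcome addition.
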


Proposition \ref{indepofm} shows that $\mathcal{T}^H_{m}$ is independent of the choice of the level $m$ structure, and it allows us to give the following definitions.
\begin{definition}We define the complete complex manifold $\mathcal{T}^H=\mathcal{T}^H_{m}$, the holomorphic map $i_{\mathcal{T}}: \,\mathcal{T}\to \mathcal{T}^H$ by $i_{\mathcal{T}}=i_m$, and the extended period map $\Phi^H:\, \mathcal{T}^H\rightarrow D$ by $\Phi^H=\Phi^H_{m}$ for any $m\geq 3$.
In particular, with these new notations, we have the commutative diagram
\begin{align}\label{main diagram}
\xymatrix{\mathcal{T}\ar[r]^{i_{\mathcal{T}}}\ar[d]^{\pi_m}&\mathcal{T}^H\ar[d]^{\pi^H_{m}}\ar[r]^{{\Phi}^{H}}&D\ar[d]^{\pi_D}\\
\mathcal{Z}_m\ar[r]^{i}&\mathcal{Z}^H_{m}\ar[r]^{{\Phi}_{{\mathcal{Z}_m}}^H}&D/\Gamma.
}
\end{align}
\end{definition}

The main result of this section is the following,
\begin{theorem}\label{main theorem}
The complex manifold $\mathcal{T}^H$ is a complex affine manifold which can be embedded into $A\simeq \mathbb{C}^N$ and it is the completion space of the Torelli space $\mathcal{T}'$ with respect to the Hodge metric. Moreover, the extended period map $\Phi^H: \,\mathcal{T}^H\rightarrow N_+\cap D$ is a holomorphic injection.
\end{theorem}
\begin{proof}By the definition of $\mathcal{T}^H$ and Theorem \ref{injectivityofPhiH}, it is easy to see that $\mathcal{T}^H$ is a complex affine manifold, which can be embedded into $A\simeq \mathbb{C}^N$.
It is also not hard to see that the injectivity of $\Phi^H$ follows directly from Corollary \ref{injective PhiHm} by the definition of $\Phi^H$. Now we only need to show that $\mathcal{T}^H$ is the Hodge metric completion space of the Torelli space $\mathcal{T}'$, which follows from the following lemma. \end{proof}
\begin{lemma}\label{injectivity of i}
Let $\T_{0}\subset \T^{H}$ be defined by $\T_{0}:=i_{\T}(\T)$. Then $\T_{0}$ is biholomorphic to the Torelli space $\T'$.
\end{lemma}
\begin{proof}
Level structures will play a crucial role in the proof. First note that diagram \eqref{main diagram} together with diagram \eqref{periods} in Section \ref{section period map} implies the following commutative diagram
$$\xymatrix{
\T \ar[dr]^-{\pi}\ar[rr]^-{i_{\T}} \ar[dd]^-{\pi_m} &&\T_{0}  \ar[dd]^-{\pi_{m}^{H}|_{\T_{0}}}\ar[rr]^-{\P^{H}|_{\T_{0}}}  &&D\ar[dd]^-{\pi_{D}}\\
&\T'\ar[dl]_-{\pi'_{m}}\ar[urrr]^-{\P'}&&&\\
\Z \ar[rr]^-{i} &&\ZZ \ar[rr]^-{\P_{\Z}^{H}} &&D/\Gamma .}$$

Now we construct a map $\pi_0:\, \T_{0}\to \T'$. For any point $o$ in $\T_{0}$, we can choose $p\in \T$ such that $i_{\T}(p)=o$. We define $\pi_0:\, \T_{0}\to \T'$ such that $\pi_0(o) = \pi (p)$. Then clearly $\pi_0$ fits in the following commutative diagram
$$\xymatrix{
\T \ar[dr]^-{\pi}\ar[rr]^-{i_{\T}} \ar[dd]^-{\pi_m} &&\T_{0} \ar[dl]_-{\pi^{0}} \ar[dd]^-{\pi_{m}^{H}|_{\T_{0}}}\ar[rr]^-{\P^{H}|_{\T_{0}}}  &&D\ar[dd]^-{\pi_{D}}\\
&\T'\ar[dl]_-{\pi'_{m}}\ar[urrr]^-{\P'}&&&\\
\Z \ar[rr]^-{i} &&\ZZ \ar[rr]^-{\P_{\Z}^{H}} &&D/\Gamma .}$$

We first show that the map $\pi_0:\, \T_0\to \T'$ defined this way is well-defined and satisfies $\P^{H}|_{\T_{0}}=\P'\circ \pi_{0}$.

Let $p\neq q\in \T$ be two points such that
$i_{\T}(p)=i_{\T}(q)\in \T_{0}$.
We choose some $m\ge 3$ such that $\T^{H}\simeq \TT$. Then $$i_m(p)=i_m(q)\in \T_{m} \subset \TT.$$
Let $[M_p, L_p, \gamma_p]$ and $[M_q, L_q, \gamma_q]$ denote the fibers over the points $p$ and $q$ of the analytic family $\phi':\, \U'\to \T'$ respectively, where $\gamma_p$ and $\gamma_q$ are two markings identifying the fixed lattice $\Lambda$ isometrically with $H^{n}(M_{p},\mathbb{Z})/\text{Tor}$ and $H^{n}(M_{q},\mathbb{Z})/\text{Tor}$ respectively.

Since $i_m(p)=i_m(q)$ and $i\circ \pi_m=\pi^H_{m}\circ i_m$, we have $$i\circ\pi_m(p)=i\circ\pi_m(q)\in \Z,$$ i.e. $\pi_m(p)=\pi_m(q)\in \Z$. By the definition of $\Z$, there exists a biholomorphic map $f:\, M_p\to M_q$ such that $f^*L_q=L_p$ and
$$f^*\gamma_q= \gamma_p\cdot A,$$
where $A\in \text{Aut}(H^{n}(M_{p},\mathbb{Z})/\text{Tor},Q))$ satisfies
$$A=(A_{ij})\equiv\text{Id}\quad(\text{mod } m), \text{ for }m\ge 3.$$

Let $m_0$ be an integer such that $m_0> |A_{ij}|$ for any $i,j$. Proposition \ref{indepofm} implies that $\T^{H}\simeq \TT\simeq \T_{m_0}^{H}$. The same argument as above implies that $\pi_{m_0}(p)=\pi_{m_0}(q)\in \mathcal{Z}_{m_{0}}$ and hence
$$A=(A_{ij})\equiv\text{Id}\quad(\text{mod } m_0).$$
Since each $m_0> |A_{ij}|$, we have $A=\text{Id}$.

Therefore, we have found a biholomorphic map $f:\, M_{p}\to M_q$ such that $f^*L_q=L_p$ and $f^*\gamma_q= \gamma_p$. This implies that $p$ and $q$ in $\T$ actually correspond to the same point in the Torelli space $\T'$, i.e. $\pi(p) =\pi(q)$ in $\T'$. Therefore we have proved that the map $$\pi_0:\, \T_0\to \T'$$ is well-defined.
From the definitions of the period map and that of the Torelli space, we deduce that the map $\pi_0:\, \T_0\to \T'$ satisfies that $$\P^{H}|_{\T_{0}}=\P'\circ \pi_{0}.$$

Clearly $\pi_0$ is surjective, because $\pi:\, \T \to \T'$ is surjective. Since $\P^{H}:\, \T^{H}\to D$ is injective, so is the restriction map $\P^{H}|_{\T_{0}}:\, \T_{0}\to D$, which implies the injectivity of the map $\pi_{0}:\, \T_{0}\to \T'$. Hence $\pi_{0}:\, \T_{0}\to \T'$ is in fact a biholomorphic map. This completes the proof of  the lemma.
\end{proof}

\begin{remark}
There is another approach to Lemma \ref{injectivity of i}. On one hand, define $\mathcal{T}_0$ to be $\mathcal{T}_0=\mathcal{T}_m$ for any $m\geq 3$, as $\mathcal{T}_m$ doesn't depend on the choice of level $m$ structure according to the proof of Proposition \ref{indepofm}. Since $\mathcal{T}_{0}=(\pi^H_{m})^{-1}(\mathcal{Z}_m)$, $\pi_m^H:\, \mathcal{T}_0\to \mathcal{Z}_m$ is a covering map. Thus the fundamental group of $\mathcal{T}_0$ is a subgroup of the fundamental group of $\mathcal{Z}_m$, that is, $ \pi_1(\mathcal{T}_0)\subseteq \pi_1(\mathcal{Z}_m),$ for any $m\geq 3$.
Moreover, the universal property of the universal covering map $\pi_m: \,\mathcal{T}\to \mathcal{Z}_m$ with $\pi_m=\pi_{m}^H|_{\mathcal{T}_0}\circ i_{\mathcal{T}}$
implies that $i_{\mathcal{T}}:\,\mathcal{T}\to \mathcal{T}_0$ is also a covering map.

On the other hand, let $\{m_k\}_{k=1}^{\infty}$ be a sequence of positive integers such that $m_k< m_{k+1}$ and $m_k|m_{k+1}$ for each $k\geq 1$. From the discussion of Lecture 10 of \cite{Popp}, or Page 5 of \cite{sz}, there is a natural covering map from $\mathcal{Z}_{m_{k+1}}$ to  $\mathcal{Z}_{m_{k}}$ for each $k$.
Thus the fundamental group $\pi_1(\mathcal{Z}_{m_{k+1}})$ is a subgroup of $\pi_1(\mathcal{Z}_{m_{k}})$ for each $k$. Hence, the inverse system of fundamental groups
\begin{align*}
\xymatrix{
\pi_1(\mathcal{Z}_{m_1})&\pi_1(\mathcal{Z}_{m_2})\ar[l]&\cdots\cdots\ar[l]&\pi_1(\mathcal{Z}_{m_k})\ar[l]&\cdots\ar[l]
}
\end{align*}
has an inverse limit,  which is the fundamental group of the Torelli space $\mathcal{T}'$. Since $\pi_1(\mathcal{T}_0)\subseteq \pi_1(\mathcal{Z}_{m_{k}})$ for any $k$, we have the inclusion $\pi_1(\mathcal{T}_0)\subseteq\pi_1(\mathcal{T}')$. This implies that $\T_0$ is a covering of $\T'$. Let $\pi_{0}:\, \T_{0}\to \T'$ be the covering map. The injectivity of $\pi_{0}$ follows as in the last step of the proof of Lemma \ref{injectivity of i}, from which we also get that $\pi_{0}:\, \T_{0}\to \T'$ is biholomorphic.

\end{remark}

%
Define the injective map $\pi^{0}:\, \T' \to \T^{H}$ by $\pi^{0}=(\pi_{0})^{-1}$, we then have the relation that $\P'=\P^{H}\circ \pi^{0}:\, \T' \to D$.
From the injectivity of $\P^{H}$ and $\pi^{0}$, we deduce the global Torelli theorem on the Torelli space $\T'$ of Calabi--Yau type manifolds, which completes the proof of Theorem \ref{global torelli}.

\section{Applications}\label{app}
In this section, we use the global Torelli theorem on Hodge metric completion space of the Torelli space of Calabi--Yau type manifolds to show that the global Torelli theorem holds on the moduli space with level $m$ structure.


Let $\Gamma=\rho(\pi_1(\ZZ))$ denote the global monodromy group,  where $$\rho:\, \pi_1(\ZZ) \to \text{Aut}(H_{\mathbb Z}, Q)$$ denotes the monodromy representation. Then the
corresponding period maps can be written in the following commutative diagram,
\[\xymatrix{\mathcal{T}^{H}\ar[rd]^{\tilde{\Phi}^{H}}\ar[d]^{\pi_{m}^{H}}\ar[r]^-{\Phi^{H}}&D\ar[d]^{\pi_D}\\ \ZZ\ar[r]^{\Phi_{\ZZ}}&D/\Gamma,}\]
where $$\tilde{\Phi}^{H}=\pi_D\circ\Phi^{H}.$$

The image of the extended period map $\Phi_{\ZZ}$ in general is an analytic subvariety of $D/\Gamma$.  We refer the reader to page 156 of \cite{Griffiths3} for details of the
analyticity of the image of the period map.

\begin{theorem}\label{generic}
The extended period map $\Phi_{\ZZ}: \,\ZZ\rightarrow D/\Gamma$ is injective. As a consequence the global Torelli theorem holds on the moduli space $\Z$ of polarized Calabi--Yau type manifolds with level $m$ structure.
\end{theorem}

\begin{proof}[Proof of Theorem \ref{generic}]
We will give two proofs. For the first proof, we first show that $\Phi_{\ZZ}$ is a covering map from $\ZZ$ to its image in $D/\Gamma$. This follows from the following lemma.

\begin{lemma}\label{lemma smoothness}
Let $\tilde{\Phi}^{H}:\,{\mathcal{T}^{H}}\to D/\Gamma$ be the composition of
$\Phi^{H}$ and the covering map $\pi_D:\,D\to D/\Gamma$. Then
$\tilde{\Phi}^H$ is a covering map onto its image  which is $\Phi_{\ZZ}(\ZZ)$.
\end{lemma}

\begin{proof}[Proof of Lemma \ref{lemma smoothness}]
First from Theorem (D.2) in page 179 of \cite{Griffiths3}, $\Phi^H(\T^H)$ is invariant under the action of $\Gamma$. In our case, $\Phi^H$ is a global embedding which implies that $\Phi^H(\T^H)$ is smooth, and $\pi_D$ is a covering map.

On the other hand $D/\Gamma$ is smooth, because $\Gamma$ is torsion-free from Lemma \ref{trivial monodromy}.  As discussed in page 156 of \cite{Griffiths3}, the isotropy group corresponding to the points in the smooth manifold $\Phi^H(\T^H)$ that are mapped to a singular point in  $\Phi_{\ZZ}(\ZZ)$ by the quotient of $\Gamma$ must be a finite torsion subgroup of $\Gamma$. Therefore $$\Phi_{\ZZ}(\ZZ)=\pi_D\circ \Phi^H(\T^H)$$ is actually a smooth variety, since $\Gamma$ is torsion-free. This gives that $\T^H$, which is biholomorphic to $\Phi^H(\T^H)$, is also a covering space of $\Phi_{\ZZ}(\ZZ)$, and $\tilde{\Phi}^H$ is a covering map.
\end{proof}

\begin{lemma}\label{P^H covering}
The extended period map $\Phi_{\ZZ}: \,\ZZ\rightarrow D/\Gamma$ is a covering map from $\ZZ$ onto its image in $D/\Gamma$.
\end{lemma}
\begin{proof}[Proof of Lemma \ref{P^H covering}]
Note that in the following commutative diagram
\[\xymatrix{\mathcal{T}^{H}\ar[rd]^{\tilde{\Phi}^{H}}\ar[d]^{\pi_{m}^{H}}\ar[r]^-{\Phi^{H}}&D\ar[d]^{\pi_D}\\ \ZZ\ar[r]^{\Phi_{\ZZ}}&D/\Gamma,}\]
all the varieties involved are smooth. Since the map $\pi_{m}^{H}$ is a covering map, $\pi_{m}^{H}$ is locally biholomorphic.
Similarly, $\Phi^H$ is an embedding and $\pi_D$ is a covering map.

From Lemma \ref{lemma smoothness}, we know that the map
$$\tilde{\Phi}^H = \pi_D\circ \Phi^H= \Phi_{\ZZ}\circ \pi_{m}^H:\, \T^H \to  \Phi_{\ZZ}(\ZZ)$$ is a covering map, therefore a locally biholomorphic map, and hence the Jacobians of these maps are all nondegenerate.  In particular this implies that the Jacobian of $\Phi_{\ZZ}$ is nondegenerate, which implies that $$\Phi_{\ZZ}:\, \ZZ \to \Phi_{\ZZ}(\ZZ)$$ is a locally biholomorphic map.

On  the smooth complex submanifold $$\Phi_{\ZZ}(\ZZ)=\pi_D\circ \Phi^H(\T^H)$$ of $D/\Gamma$, we have the induced Hodge metric from $D/\Gamma$. Since the image of $\ZZ$ under $\Phi_{\ZZ}$ is precisely
$\Phi_{\ZZ}(\ZZ)$ and the map $\Phi_{\ZZ}$ is nondegenerate, the pull-back Hodge metric on $\ZZ$ by $\Phi_{\ZZ}$ from the submanifold $\Phi_{\ZZ}(\ZZ)$ in $D/\Gamma$ is the same as the original Hodge metric induced from $D/\Gamma$ through the pull-back by $\Phi_{\ZZ}:\, \ZZ \to D/\Gamma$.

Since the induced Hodge metric on $\ZZ$ is complete as the Hodge metric completion of $\Z$, we can directly apply Lemma \ref{covering-lemma} of Griffiths--Wolf to deduce that  $$\Phi_{\ZZ}:\,\ZZ\to \Phi_{\ZZ}(\ZZ)$$ is a covering map.
\end{proof}


\noindent\textit{Proof of Theorem \ref{generic}(continued).}
In Lemma \ref{P^H covering} we have already proved that $\Phi_{\ZZ}(\ZZ)$ is smooth and $$\Phi_{\ZZ}:\,\ZZ\to \Phi_{\ZZ}(\ZZ)$$ is a covering map.
This implies that $\pi_1(\ZZ)$ can be identified as a subgroup of the fundamental group of $\Phi_{\ZZ}(\ZZ)$.

On the other hand,  since $\T^H$ is simply connected and $\Phi^H:\, \T^H \to D$ is an embedding, Lemma \ref{lemma smoothness} gives that
$$ \Phi_{\ZZ}(\ZZ)= \Phi^H(\T^H)/\Gamma$$ which implies that the fundamental group of  $\Phi_{\ZZ}(\ZZ)$ is $\Gamma$, where
$$\Gamma= \rho(\pi_1(\ZZ))\simeq \pi_1(\ZZ)/\text{Ker}(\rho)$$ with $\rho:\, \pi_1(\ZZ)\to \text{Aut}(H_{\mathbb Z}, Q)$  the monodromy representation. From this we deduce that $$\pi_1(\ZZ)\subset \Gamma\simeq \pi_1(\ZZ)/\text{Ker}(\rho),$$
which implies that $\text{Ker}(\rho)=0$ and $\pi_1(\ZZ)\simeq \Gamma$.

Therefore we have proved that $\pi_1(\ZZ)$ is isomorphic to the fundamental group of $\Phi_{\ZZ}(\ZZ)$.
From this we deduce that the covering map
$$\Phi_{\ZZ}: \ZZ \to D/\Gamma$$
is a biholomorphic map onto its image.
\end{proof}

In the following, we give another simpler proof of Theorem \ref{generic}.

\begin{proof}[Second proof of Theorem \ref{generic}]
Let $p_{1}$ and $p_{2}$ be two points in $\Z$ such that $\Phi_{\Z}(p_{1})=\Phi_{\Z}(p_{2})$ in $D/\Gamma$. Let $$[M_{i},L_{i},[\gamma_{i}]_{m}], \, i=1,2$$ be the fibers over $p_{1}$ and $p_{2}$ of the analytic family $f_{m} :\, \U_{m}\to \Z$ respectively.  From Lemma \ref{injectivity of i} we know that $$\pi^0:\, \T'\to \T^{H}$$ identifies $\T'$ to the Zariski open submanifold $$\T_0=i_\T(\T)\simeq i_m(\T)\subseteq \TT$$ of $\TT$, and that $\T_0$ is a cover of $\Z$ by Proposition \ref{opend}. In the following discussion we will use freely the identification $$\T_0\simeq \T'.$$

There exist two points ${q}_{1}$ and ${q}_{2}$ in $\T'$ over which are the fibers $$[M_{i},L_{i},\gamma_{i}], \, i=1,2$$ of the analytic family $g':\, \U'\to \T'$ respectively,
such that
$$\pi_{m}'({q}_{i})=p_{i}, \, i=1,2$$
under the covering map $\pi_{m}':\, \T' \to \Z$.

The condition $\Phi_{\Z}(p_{1})=\Phi_{\Z}(p_{2})$ implies that there exists $\gamma\in \Gamma$ such that
$$\Phi'({q}_{1})=\gamma\Phi'({q}_{2}).$$
Let ${q}_{1}'\in \T'$ correspond to the triple $[M_{1},L_{1},\gamma_{1}\gamma]$. Then by the definition of the period map $\Phi'$ in \eqref{defn of P'}, we have
\begin{eqnarray*}
\Phi'(q_{1}')&=&(\gamma_{1}\gamma)^{-1}(F^n(M_{1})\subseteq\cdots\subseteq F^0(M_{1}))\\
             &=&\gamma^{-1}\gamma_{1}^{-1}(F^n(M_{1})\subseteq\cdots\subseteq F^0(M_{1}))\\
             &=&\gamma^{-1}\Phi'(q_{1})\\
             &=&\Phi'(q_{2}).
\end{eqnarray*}
By Theorem \ref{global torelli}, we see that $q_{1}'=q_{2}$ in $\T'$.

Now we have the inclusion $\pi^{0}:\, \T' \to \T^{H}$ and the inclusion $i:\, \Z \to \ZZ$, therefore we can view the points $p_{1}$ and $p_{2}$ as points in $\ZZ$ and the points $q_{1}$, $q'_{1}$ and $q_{2}$ as points in $\T^{H}$.

Since $\gamma$ lies in the image of the monodromy representation $\rho:\, \pi_{1}(\ZZ)\to \Gamma$, there exists a $\tilde{\gamma}\in \pi_{1}(\ZZ)$ such that
$$\rho(\tilde{\gamma})=\gamma \text{ and } q_{1}'=q_{1}\tilde{\gamma},$$ where $q_{1}\tilde{\gamma}$ is defined by the action of $\pi_{1}(\ZZ)$ on the universal cover $\T^{H}$, and the action of $\tilde{\gamma}$ on the fiber $[M_{1},L_{1},\gamma_{1}]$ over $q_{1}$ is defined by
$$[M_{1},L_{1},\gamma_{1}]\tilde{\gamma}= [M_{1},L_{1},\gamma_{1}\gamma].$$
So we have  $$p_{1}=\pi_{m}^{H}(q_{1})=\pi_{m}^{H}(q_{1}')=\pi_{m}^{H}(q_{2})=p_{2},$$
which proves the injectivity of $\Phi_{\Z}$.
\end{proof}

\vspace{+12 pt}

\noindent Center of Mathematical Sciences, Zhejiang University, Hangzhou, Zhejiang 310027, China;\\
Department of Mathematics, University of California at Los Angeles, Los Angeles, CA 90095-1555, USA\\
\noindent e-mail: liu@math.ucla.edu, kefeng@cms.zju.edu.cn

\vspace{+6pt}
\noindent Center of Mathematical Sciences, Zhejiang University, Hangzhou, Zhejiang 310027, China \\
\noindent e-mail: syliuguang2007@163.com

\end{document}